


\documentclass[10pt]{amsart}

\usepackage[all]{xypic}
\usepackage{rotating,graphicx}


\usepackage{latexsym}
\usepackage{amssymb}
\usepackage{amsfonts}
\usepackage{amscd}
\usepackage{amsmath,amsthm}
\usepackage{pifont}



%

\newtheorem{lemma}{Lemma}[section]
\newtheorem{proposition}[lemma]{Proposition}
\newtheorem{theorem}[lemma]{Theorem}
\newtheorem{corollary}[lemma]{Corollary}

\newtheorem{lem}[lemma]{Lemma}
\newtheorem{prop}[lemma]{Proposition}
\newtheorem{thm}[lemma]{Theorem}
\newtheorem{cor}[lemma]{Corollary}

{

}

\theoremstyle{definition}

\newtheorem{example}[lemma]{Example}
\newtheorem{definition}[lemma]{\sl Definition}
\newtheorem{conjecture}[lemma]{Conjecture}

\theoremstyle{remark}

\newtheorem{remark}[lemma]{Remark}


\numberwithin{equation}{section}

\newenvironment{pf}{\noindent{\bf Proof.}}{\hfill $\square$\medskip}


%
%

\def\NN{{\mathbb N}}

\def\PP{{\mathbb P}}

\def\ZZ{{\mathbb Z}}

%
%


\def\0ol{{\bar 0}}
\def\1ol{{\bar 1}}
\def\2ol{{\bar 2}}
\def\ol2{{\bar 2}}
\def\3ol{{\bar 3}}
\def\4ol{{\bar 4}}
\def\5ol{{\bar 5}}
\def\6ol{{\bar 6}}
\def\7ol{{\bar 7}}
\def\8ol{{\bar 8}}
\def\9ol{{\bar 9}}

\def\bold0{{\bf 0}}
\def\bold1{{\bf 1}}
\def\bold2{{\bf 2}} 
\def\bold3{{\bf  3}}
\def\bold4{{\bf 4}}
\def\bold5{{\bf 5}}
\def\bold6{{\bf 6}}
\def\bold7{{\bf 7}}
\def\bold8{{\bf 8}}
\def\bold9{{\bf 9}}

%
%

%
%
%

\def\P2Skly{\PP^2_{Skly}}

\def\Alt{\operatorname {Alt}}

\def\Ext{\operatorname {Ext}}

\def\GL{\operatorname {GL}}

\def\Hom{\operatorname {Hom}}

\def\ker{\operatorname {ker}}

\def\pd{{\operatorname {\partial}}}

\def\th{\operatorname {th}}    

\def\Aut{\operatorname{Aut}}

\def\det{\operatorname{det}}

\def\dim{\operatorname{dim}}

\def\Ext{\operatorname{Ext}}

\def\gldim{\operatorname{gldim}}

\def\Hom{\operatorname{Hom}}

\def\id{\operatorname{id}}
\def\Id{\operatorname{Id}}
\def\Im{\operatorname{Im}}

\def\pd{{\partial}}

\def\Sym{\operatorname{Sym}}

\def\ul1{\operatorname{\underline{1}}}

\def\l{\leftarrow}

\def\d{\downarrow}

\def\a{\alpha}
\def\b{\beta}
\def\c{\gamma}
\def\d{\delta}
\def\e{\epsilon}

\def\l{\lambda}

\def\s{\sigma}
\def\t{\tau}

\def\fS{{\mathfrak S}}

\def\fm{{\mathfrak m}}

%
%

\def\sM{{\sf M}}

\def\sfv{{\sf v}}
\def\sfw{{\sf w}}

%
%

\def\cal{\mathcal}

\def\cD{{\cal D}}
\def\cE{{\cal E}}


\def\dirlim{\mathop{\vtop{\baselineskip -100pt\lineskip -1pt\lineskiplimit 0pt
\setbox0\hbox{lim}\copy0\hbox to \wd0{\rightarrowfill}}}\limits}
\def\invlim{\mathop{\vtop{\baselineskip -100pt\lineskip -1pt\lineskiplimit 0pt
\setbox0\hbox{lim}\copy0\hbox to \wd0{\leftarrowfill}}}\limits}

\def\I11{{1 \kern -0.8pt \! \mbox{l}}}
\def\mumu{{\mu\kern-4.2pt\mu}}
\def\bfmu{{\mu\kern-4.2pt\mu}}
\def\2slash{\backslash \! \backslash}


\def\boxtimes{\setbox0\hbox{$\Box$}\copy0\kern-\wd0\hbox{$\times$}}

\def\hdot{{\:\raisebox{2pt}{\text{\circle*{1.5}}}}}

\def\hdet{{\rm hdet}} 

\def\Aut{\operatorname {\rm Aut}}

\def\<{\langle}
\def\>{\rangle}

\pagenumbering{arabic}

\begin{document}

 
\title[$m$-Koszul AS-regular Algebras]{ $m$-Koszul Artin-Schelter regular Algebras}

\author{Izuru Mori and S. Paul Smith}

\address{Department of Mathematics, Graduate School of Science, Shizuoka University, Shizuoka 422-8529, Japan.}
\email{simouri@ipc.shizuoka.ac.jp}
\address{Department of Mathematics, Box 354350, University of Washington, Seattle, WA 98195,USA.}
\email{smith@math.washington.edu}

\keywords{homological identities, AS-regular algebras, $m$-Koszul algebras, Calabi-Yau algebras, Nakayama automorphisms.}

\subjclass{16E65, 16W50}

\thanks{The first author was supported by Grant-in-Aid for Scientific Research (C) 91540020. The first author thanks the University of Washington for its hospitality during the period that this work was done.
}

\begin{abstract} 
This paper studies the homological determinants and Nakayama automorphisms of not-necessarily-noetherian 
$m$-Koszul twisted Calabi-Yau or, equivalently,  
$m$-Koszul Artin-Schelter  regular, algebras.  Dubois-Violette showed that such an algebra is 
isomorphic to a derivation quotient algebra $\cD(\sfw,i)$ for a unique-up-to-scalar-multiples twisted superpotential $\sfw$. By
definition, $\cD(\sfw,i)$ is the quotient of the tensor algebra $TV$, where $V=\cD(\sfw,i)_1$, by $(\pd^i\sfw)$, the ideal generated by all $i^{\th}$-order left partial derivatives of $\sfw$. The restriction map $\s \mapsto \s|_V$ is used to identify the group of graded algebra automorphisms of 
$\cD(\sfw,i)$ with a subgroup of $\GL(V)$. 
We show that the homological determinant of a graded algebra automorphism $\s$ of an $m$-Koszul Artin-Schelter regular algebra
$\cD(\sfw,i)$ is given by the formula $\hdet(\s)\sfw =\s^{\otimes (m+i)}(\sfw)$. 
It follows from this that the homological determinant of the Nakayama automorphism of an $m$-Koszul Artin-Schelter regular algebra is 1.  
As an application, we prove that the homological determinant and the usual determinant coincide for most quadratic noetherian Artin-Schelter regular algebras of dimension 3.  
\end{abstract}

\maketitle

\section{Introduction}

We fix a field $k$. All vector spaces will be $k$-vector spaces. All algebras will be $\NN$-graded $k$-algebras.
Such an algebra, $A=A_0 \oplus A_1 \oplus \cdots$ is {\sf connected} if $A_0=k$. In that case, $k=A/A_{\ge 1}$ is a
graded left  $A$-module concentrated in degree 0. 

\subsection{}
Two classes of graded algebras play a central role in non-commutative projective algebraic geometry, namely twisted 
Calabi-Yau algebras and Artin-Schelter regular algebras. The definitions of these algebras vary a little from one paper to 
another but in this paper they will be defined in such a way that they are, in fact, the same (see \S\ref{ssect.CY}). 
The algebras we study need not be noetherian.

This paper concerns the homological determinants and Nakayama automorphisms of $m$-Koszul twisted Calabi-Yau or, equivalently,  
$m$-Koszul Artin-Schelter  regular, algebras. 

\subsubsection{}
A connected graded algebra $A$ is {\sf $m$-Koszul} if it is finitely presented and 
its relations are homogeneous of degree $m$ and $\Ext^i_A(k,k)$ is concentrated in
a single degree for all $i$. $2$-Koszul algebras are Koszul algebras in the ``classical'' sense. $m$-Koszul algebras 
were introduced by R. Berger \cite{B1}.

\subsubsection{}
A connected graded algebra $A$ is  {\sf Artin-Schelter regular} (AS-regular, for short) of dimension $d$ if  $\gldim(A)=d<\infty$, and
\begin{equation}
\label{AS-Gor-cond}
\Ext^i_A(k,A) \cong \begin{cases} 
					k(\ell) & \text{if $i=d$}
					\\
					0 & \text{if $i \ne d$}
				\end{cases}
\end{equation}
for some $\ell \in \ZZ$. Commutative Artin-Schelter regular algebras are polynomial rings. 
The number $\ell$ in (\ref{AS-Gor-cond}) is called the  {\sf Gorenstein parameter} of $A$.

\subsection{Notation}
\label{ssect.notn1}
The following notation will be used throughout the paper.  
\begin{enumerate}
\item{}
$A$ denotes an arbitrary connected $\NN$-graded $k$-algebra.
  \item
  $S$ denotes an $m$-Koszul AS-regular algebra of dimension $d$ and Gorenstein parameter $\ell$.
  \item 
  $V=S_1$ and $R=\ker\big(V^{\otimes m} \xymatrix{\ar[rr]^{\rm multiplication}&& } S_m\big)$; thus $S\cong TV/(R)$.
  \item{}
  $\Aut(S)$ is the group of graded $k$-algebra automorphisms of $S$. Since an automorphism of $S$ is determined by its restriction to $S_1$
  we always view $\Aut(S)$ as a subgroup of $\GL(V)$, $\Aut(S)=\{\s \in \GL(V) \; | \; \s^{\otimes m}(R)=R\}$.
  \item{}
  $\hdet:\Aut(S) \to k^\times$ is the homological determinant (see Theorem \ref{thm.JZ}).
  \item
  $\e\in \Aut(S)$ is multiplication by $(-1)^i$ on $S_i$. 
  \item
  $\sfw$, or $\sfw_S$ when we need to emphasize $S$, 
   is a basis for the subspace
   \newline 
   $\bigcap_{s+t=\ell-m} V^{\otimes s} \otimes R \otimes V^{\otimes t}$ of  $V^{\otimes \ell}$. 
  \item 
  $E=\Ext^*_S(k,k)$ and $E^i=\Ext^i_S(k,k)$.
  \item{}
 $\nu$ is the Nakayama automorphism of $S$ (defined in \S\ref{ssect.CY}).
   \item{}
 $\mu$ is the Nakayama automorphism of  $E$ (defined in \S\S\ref{ssect.nak.autom1.5} and \ref{ssect.nak.autom2}).
\end{enumerate}
Under the convention in (4), $\nu \in \GL(V)$. However, if $m \ge 3$, $E$ is not generated by $V^*=\Ext^1_S(k,k)$ so it
is not natural to identify $\Aut(E)$ with a subgroup of $\GL(V^*)$. Nevertheless, we can, and do, consider $\mu|_{V^*}$. 

Proposition \ref{prop.w} explains why the intersection in (7) is 1-dimensional. 

\subsection{Results I}

In this section, we adopt the notation in \S\ref{ssect.notn1}.

\begin{theorem} \textnormal {(Theorem \ref{thm.ws})} 
$\Aut (S)=\{\s \in \GL(V) \; | \; \s^{\otimes \ell}(k\sfw)=k \sfw\}$.
\end{theorem}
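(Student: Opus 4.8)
The plan is to prove the two set inclusions separately, exploiting the description $\Aut(S)=\{\s\in\GL(V)\mid \s^{\otimes m}(R)=R\}$ from \S\ref{ssect.notn1}(4); thus for $\s\in\GL(V)$ I want the equivalence $\s^{\otimes m}(R)=R\Leftrightarrow \s^{\otimes\ell}(k\sfw)=k\sfw$. The inclusion $\Aut(S)\subseteq\{\s\mid\s^{\otimes\ell}(k\sfw)=k\sfw\}$ is formal, while the reverse inclusion carries the content, since it must recover $R$ from $\sfw$.

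For the easy inclusion I would argue as follows. Assume $\s^{\otimes m}(R)=R$ and fix $s,t\ge 0$ with $s+t=\ell-m$. Because $\s$ is invertible, $\s^{\otimes s}(V^{\otimes s})=V^{\otimes s}$ and $\s^{\otimes t}(V^{\otimes t})=V^{\otimes t}$, whence
\[
\s^{\otimes\ell}\big(V^{\otimes s}\otimes R\otimes V^{\otimes t}\big)=V^{\otimes s}\otimes\s^{\otimes m}(R)\otimes V^{\otimes t}=V^{\otimes s}\otimes R\otimes V^{\otimes t}.
\]
So $\s^{\otimes\ell}$ stabilizes each term of the intersection defining $\sfw$ in \S\ref{ssect.notn1}(7), hence carries the line $k\sfw$ (Proposition \ref{prop.w}) into itself; being injective, it then fixes that line, giving $\s^{\otimes\ell}(k\sfw)=k\sfw$.

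For the reverse inclusion I would invoke the derivation-quotient picture. By Dubois-Violette's theorem $S\cong\cD(\sfw,\ell-m)$, so $R$ is precisely the span of the $(\ell-m)^{\th}$-order left partial derivatives $\partial_{\xi_1}\cdots\partial_{\xi_{\ell-m}}(\sfw)$, $\xi_j\in V^*$, where $\partial_\xi(v_1\otimes\cdots\otimes v_n)=\xi(v_1)\,v_2\otimes\cdots\otimes v_n$. The one computation needed is the intertwining relation $\partial_\xi\circ\s^{\otimes n}=\s^{\otimes(n-1)}\circ\partial_{\s^*\xi}$, where $\s^*\xi=\xi\circ\s$; iterating it $\ell-m$ times gives
\[
\partial_{\xi_1}\cdots\partial_{\xi_{\ell-m}}\circ\s^{\otimes\ell}=\s^{\otimes m}\circ\partial_{\s^*\xi_1}\cdots\partial_{\s^*\xi_{\ell-m}}.
\]
Assuming $\s^{\otimes\ell}(\sfw)=c\,\sfw$ with $c\in k^\times$ and evaluating at $\sfw$ yields $c\,\partial_{\xi_1}\cdots\partial_{\xi_{\ell-m}}(\sfw)=\s^{\otimes m}\big(\partial_{\s^*\xi_1}\cdots\partial_{\s^*\xi_{\ell-m}}(\sfw)\big)$. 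As the $\xi_j$ range over $V^*$, the left side sweeps out $cR=R$, and, since $\s^*$ permutes $V^*$ bijectively, the right side sweeps out $\s^{\otimes m}(R)$; hence $\s^{\otimes m}(R)=R$ and $\s\in\Aut(S)$.

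I expect the main obstacle to be the single structural input behind the reverse inclusion: that $R$ is reconstructible from $\sfw$ as the span of its top-order left partial derivatives, i.e.\ the identification $S\cong\cD(\sfw,\ell-m)$ (equivalently, that the basis vector of the intersection in \S\ref{ssect.notn1}(7) is, up to scalar, the twisted superpotential). This is where the $m$-Koszul AS-regular hypothesis genuinely enters; once it is in hand from the earlier sections, the remainder is just invertibility of $\s$ together with the formal intertwining of $\partial_\xi$ with $\s^{\otimes n}$.
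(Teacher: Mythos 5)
Your proposal is correct and takes essentially the same route as the paper: the easy inclusion by noting $\s^{\otimes \ell}$ stabilizes the one-dimensional intersection $W_\ell=k\sfw$, and the converse by combining the identification $S=\cD(\sfw,\ell-m)$ (Theorem \ref{thm.DV} and Proposition \ref{prop.sdw}) with the intertwining relation $\partial_\xi\circ\s^{\otimes n}=\s^{\otimes (n-1)}\circ\partial_{\s^*\xi}$. Your iterated closed-form version of that relation is exactly the one-step computation, plus induction, in the paper's Lemma 3.1.
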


\begin{theorem} \textnormal{(Theorem \ref{thm.msss})} 
If $\s\in \Aut(S)$, then $\s^{\otimes \ell}(\sfw)=\hdet (\s)\sfw$.
\end{theorem}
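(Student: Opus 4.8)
The plan is to compute both sides of the identity as the scalar by which $\sigma$ acts on the top term of the minimal free resolution of the trivial module, and then to match that scalar against the definition of the homological determinant. By the previous theorem (Theorem \ref{thm.ws}) we already know that $\sigma^{\otimes \ell}$ stabilizes the line $k\sfw$, so there is a unique scalar $c(\sigma)\in k^\times$ with $\sigma^{\otimes\ell}(\sfw)=c(\sigma)\sfw$, and $\sigma\mapsto c(\sigma)$ is visibly a homomorphism $\Aut(S)\to k^\times$ since $(\sigma\tau)^{\otimes\ell}=\sigma^{\otimes\ell}\tau^{\otimes\ell}$. The entire content of the theorem is therefore the identification $c(\sigma)=\hdet(\sigma)$.

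First I would write down the minimal free resolution $P_\bullet \to k$ of the trivial left $S$-module in the form available for $m$-Koszul algebras: the terms are $P_i=S\otimes_k W_i$, where each syzygy space $W_i$ is one of the intersections $\bigcap V^{\otimes s}\otimes R\otimes V^{\otimes t}$ (concentrated in a single internal degree $d_i$), and the differentials are induced from the inclusions $W_i\hookrightarrow V^{\otimes(d_i-d_{i-1})}\otimes W_{i-1}$ followed by multiplication in $S$. Because $S$ is AS-regular of dimension $d$ with Gorenstein parameter $\ell$, this resolution has length $d$, and its top term is $P_d=S\otimes W_d$ with
\[
W_d=\bigcap_{s+t=\ell-m} V^{\otimes s}\otimes R\otimes V^{\otimes t}=k\sfw,
\]
concentrated in degree $\ell$; that this space is one-dimensional is Proposition \ref{prop.w}, and it is precisely the space defining $\sfw$ in \S\ref{ssect.notn1}(7).

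Next I would observe that $\sigma$ induces a $\sigma$-semilinear automorphism of the resolution $P_\bullet$ lifting the identity on $k$: concretely, on $P_i=S\otimes_k W_i$ it is $\sigma\otimes\bigl(\sigma^{\otimes d_i}|_{W_i}\bigr)$, i.e. the natural $\GL(V)$-action on the tensor space containing $W_i$. Compatibility with the differentials holds because $\sigma^{\otimes m}(R)=R$, so $\sigma^{\otimes d_i}$ carries each intersection space $W_i$ into itself and commutes with the structural inclusions and with multiplication. In particular, on the top term $\sigma$ acts on $W_d=k\sfw$ through $\sigma^{\otimes\ell}$, hence by the scalar $c(\sigma)$. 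Applying $\Hom_S(-,S)$ to $P_\bullet$ and passing to cohomology computes $\Ext^d_S(k,S)\cong k(\ell)$ (the Gorenstein condition (\ref{AS-Gor-cond})), and the induced action of $\sigma$ on this one-dimensional space is read off from the top term $P_d$, namely it is $c(\sigma)$ up to the duality pairing. Finally I would unwind the characterization of $\hdet$ recorded in Theorem \ref{thm.JZ}: the homological determinant is exactly the scalar through which $\sigma$ acts on this top group (equivalently, via local duality, on the lowest-degree piece of $\underline{H}^d_{\fm}(S)$), so matching the two computations yields $\hdet(\sigma)=c(\sigma)$.

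The main obstacle is bookkeeping rather than conceptual. I must pin down the conventions in the definition of $\hdet$ in Theorem \ref{thm.JZ}—left versus right modules, and whether $\hdet$ is the scalar by which $\sigma$ or $\sigma^{-1}$ acts on $\Ext^d_S(k,S)$ / on top local cohomology—so that no spurious inverse or sign is introduced when passing through $\Hom_S(-,S)$; and I must verify that the lift of $\sigma$ to the $m$-Koszul resolution genuinely acts by $\sigma^{\otimes\ell}$ on $W_d$ and not by some twist. Checking the compatibility of this lift with every differential, using only $\sigma^{\otimes m}(R)=R$, is the technical heart of the argument, but it reduces to the fact that the entire resolution is functorially built out of $V$ and $R$, on which $\sigma$ acts diagonally.
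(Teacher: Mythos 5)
Your proposal is correct in outline but takes a different route from the paper at the decisive step. Both arguments start identically: Theorem \ref{thm.ws} yields a scalar $c(\sigma)$ with $\sigma^{\otimes\ell}(\sfw)=c(\sigma)\sfw$, and everything reduces to proving $c(\sigma)=\hdet(\sigma)$. The paper does this in two lines: multiplication by $c(\sigma)$ on $W_\ell=k\sfw$ dualizes to multiplication by $c(\sigma)$ on $W_\ell^*\cong\Ext^d_S(k,k)$, and the cited Wu--Zhu formula (Proposition \ref{prop.WZ}) says $\hdet(\sigma)$ is exactly the scalar by which $\sigma^!$ acts there. You instead lift $\sigma$ semilinearly to the $m$-Koszul resolution, apply $\Hom_S(-,S)$, and compare the induced scalar on $\Ext^d_S(k,S)\cong k(\ell)$ with the Jorgensen--Zhang definition. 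This can be carried out, and a check against $S=k[x,y]$, where $\sfw=xy-yx$ and $\sigma^{\otimes 2}(\sfw)=\det(\sigma)\sfw$, confirms the direction $c(\sigma)=\hdet(\sigma)$; but be aware that the step you call ``bookkeeping''---passing from the local-cohomology definition in Theorem \ref{thm.JZ} to the induced action on $\Ext^d_S(k,S)$---is a genuine graded local-duality argument with semilinearity and inverse pitfalls (the natural lift convention $f\mapsto\sigma\circ f\circ\tilde{\sigma}^{-1}$ produces $c(\sigma)^{-1}$ on $\Ext^d_S(k,S)$, while $(\sigma^{-1})'$ is the identity on the lowest-degree piece of $H^d_\fm(S)\cong S'(\ell)$), and it is precisely the content the paper outsources to \cite[Prop.~1.11]{WZ}. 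What your route buys is self-containedness, avoiding the Yoneda-algebra description and the Wu--Zhu citation; what the paper's route buys is brevity and immunity from the sign/inverse chase, since dualizing a scalar on a line introduces no twist. If you execute your plan, also verify homotopy-uniqueness of the lift so the induced map on $\Ext$ is well defined.
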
  
 
The following theorem makes \cite [Thms. 6.2 and 6.8]{BSW} less mysterious (cf. \cite[Thm. 4.12]{WZ}).  

\begin{theorem} \textnormal{(Corollary \ref{cor.sp})} 
$S$ is Calabi-Yau if and only if $\sfw$ is a $(-1)^{d+1}$-twisted superpotential in the sense of Definition \ref{de.tw.spp}.
\end{theorem}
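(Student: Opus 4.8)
The plan is to reduce the statement to a single structural identity relating the cyclic symmetry of $\sfw$ to the Nakayama automorphism $\nu$, and then to read off the Calabi--Yau condition from it. Let $\tau\colon V^{\otimes \ell}\to V^{\otimes\ell}$ denote the cyclic permutation $\tau(v_1\otimes\cdots\otimes v_\ell)=v_\ell\otimes v_1\otimes\cdots\otimes v_{\ell-1}$, so that (Definition \ref{de.tw.spp}) $\sfw$ is a $(-1)^{d+1}$-twisted superpotential exactly when $\tau(\sfw)=(-1)^{d+1}\sfw$. By the definition of twisted Calabi--Yau in \S\ref{ssect.CY}, $S$ is Calabi--Yau precisely when its Nakayama automorphism $\nu$ is trivial: since $S$ is connected graded it has no nonscalar units, the only inner graded automorphism is the identity, and so ``$\nu$ inner'' coincides with $\nu=\id$. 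Thus it suffices to prove the equivalence $\bigl[\tau(\sfw)=(-1)^{d+1}\sfw\bigr]\Longleftrightarrow\bigl[\nu=\id\bigr]$.

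The heart of the argument is the master identity
\[
\tau(\sfw)\;=\;(-1)^{d+1}\bigl(\nu\otimes\id_{V^{\otimes(\ell-1)}}\bigr)(\sfw),
\]
which says that $\sfw$ is a $\bigl((-1)^{d+1}\nu\bigr)$-twisted superpotential. I would establish this from the Dubois--Violette presentation $S\cong\cD(\sfw,i)$ together with the AS-Gorenstein self-duality of the minimal bimodule resolution of $S$: the superpotential $\sfw$ is the datum sitting at the top of that length-$d$ complex, the AS-Gorenstein condition (\ref{AS-Gor-cond}) makes the complex isomorphic to its own $\nu$-twisted dual up to the shift by $\ell$, and comparing the two descriptions of the top term yields the displayed identity, with the twisting automorphism equal to $\nu$ and the sign $(-1)^{d+1}$ arising as the Koszul sign in that duality. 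Pinning down that the twist is exactly the Nakayama automorphism, and tracking the sign correctly, is the step I expect to be the main obstacle; everything else is formal. As a consistency check, iterating $\tau$ on the identity gives $\sfw=(-1)^{\ell(d+1)}\nu^{\otimes\ell}(\sfw)$, which is compatible with $\nu^{\otimes\ell}(\sfw)=\hdet(\nu)\sfw=\sfw$ coming from Theorem \ref{thm.msss} and the fact that $\hdet(\nu)=1$.

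Granting the master identity, the corollary follows in two short steps, using the nondegeneracy of $\sfw$ that accompanies AS-regularity: writing $\sfw=\sum_\alpha e_\alpha\otimes\sfw_\alpha$ with $\{e_\alpha\}$ a basis of $V$, the components $\sfw_\alpha\in V^{\otimes(\ell-1)}$ are linearly independent (equivalently, the twist in a twisted-superpotential relation for $\sfw$ is unique). For the forward direction, if $S$ is Calabi--Yau then $\nu=\id$, and the master identity collapses to $\tau(\sfw)=(-1)^{d+1}\sfw$, so $\sfw$ is a $(-1)^{d+1}$-twisted superpotential. Conversely, if $\tau(\sfw)=(-1)^{d+1}\sfw$, then comparing with the master identity gives $\bigl((\nu-\id)\otimes\id_{V^{\otimes(\ell-1)}}\bigr)(\sfw)=0$; expanding this as $\sum_\alpha(\nu-\id)(e_\alpha)\otimes\sfw_\alpha=0$ and invoking the linear independence of the $\sfw_\alpha$ forces $\nu=\id$, whence $S$ is Calabi--Yau.
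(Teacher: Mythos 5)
There is a genuine gap: your entire argument rests on the ``master identity'' relating $\phi(\sfw)$ to $\nu$, and you do not prove it --- you explicitly defer ``pinning down that the twist is exactly the Nakayama automorphism, and tracking the sign correctly'' as the main obstacle. But that identity \emph{is} the content of the statement: in the paper, Corollary \ref{cor.sp} is an immediate consequence of Theorem \ref{thm.mnw}, which asserts precisely that $\nu$ is the unique element of $\GL(V)$ with $(\nu \otimes \id^{\otimes \ell-1})\phi (\sfw)=(-1)^{d+1}\sfw$. Once that theorem is available, both directions of the corollary are formal (set $\nu=\id$ for one direction, invoke uniqueness of the twist for the other), exactly as in your final paragraph; so a blind proof must actually establish the master identity. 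The paper does so not via self-duality of the bimodule resolution but via the graded Frobenius structure of $E=\Ext^*_S(k,k)$: with the pairing $\<\xi,\eta\>=(-1)^{i(d-i)}(\xi\eta)(\sfw)$ one has $\<\xi,\eta\>=\<\eta,\mu(\xi)\>$ for the Nakayama automorphism $\mu$ of $E$, the cyclicity relation $(\eta\mu(\xi))(\sfw)=(\mu(\xi)\eta)(\phi(\sfw))$ converts this into a statement about $\phi(\sfw)$, and the key external input $\mu|_{V^*}=(-1)^{d+1}\nu^*$ (Lemma \ref{lem.BM}, from \cite[Thm. 6.3]{BM}) identifies the twist as $\nu$ with the correct sign; uniqueness comes from non-degeneracy of the pairing on $E^1\times E^{d-1}$. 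Note that this non-degeneracy is also exactly what your unproven linear-independence claim for the components $\sfw_\alpha$ amounts to, so that step too is carried by the Frobenius property rather than being free.

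A symptom of the deferred bookkeeping: your displayed identity has the twist inverted. Under Definition \ref{de.tw.spp} and the paper's conventions, Theorem \ref{thm.mnw} reads $(\nu\otimes\id^{\otimes\ell-1})\phi(\sfw)=(-1)^{d+1}\sfw$, equivalently $\phi(\sfw)=(-1)^{d+1}(\nu^{-1}\otimes\id^{\otimes\ell-1})(\sfw)$; your formula $\tau(\sfw)=(-1)^{d+1}(\nu\otimes\id^{\otimes(\ell-1)})(\sfw)$ therefore makes $\sfw$ a $(-1)^{d+1}\nu^{-1}$-twisted superpotential, contradicting your own parenthetical claim that it is $(-1)^{d+1}\nu$-twisted. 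This $\nu$-versus-$\nu^{-1}$ slip is harmless for the corollary itself (since $\nu=\id$ iff $\nu^{-1}=\id$), but it confirms that the twist-and-sign determination you postponed is precisely where the mathematical work lies, and without it the proposal proves nothing beyond reducing the corollary to the paper's Theorem \ref{thm.mnw}.
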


Conditions (2) and (3) in the next result are closed conditions so (1) says that the homological determinant equals the determinant
for ``almost all'' noetherian quadratic AS-regular algebras of dimension 3 .

\begin{theorem} \textnormal {(Theorem \ref{thm.cwhd})}
Suppose $d=3$ and $m=2$, i.e., $S$ is a 3-dimensional Artin-Schelter regular algebra on 3 generators.  
The following are equivalent: 
\begin{enumerate}
\item{} There exists $\s\in \Aut(S)$ such that $\hdet (\s)\neq \det (\s)$. 
\item{} $c(\sfw)\in \Sym^3V$. 
\item{} $R\subseteq \Sym^2V$. 
\end{enumerate}
\end{theorem}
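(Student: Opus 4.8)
The plan is to prove the cycle $(1)\Rightarrow(2)\Rightarrow(3)\Rightarrow(1)$. Throughout, $\dim V=3$ and, since $S$ is quadratic AS-regular of dimension $3$, its Gorenstein parameter is $\ell=3$; thus $\sfw\in V^{\otimes 3}$ and $R=\pd^{1}\sfw\subseteq V^{\otimes 2}$ is spanned by the three first-order left partial derivatives of $\sfw$. I would work with the $\GL(V)$-decomposition $V^{\otimes 3}=\Sym^3 V\oplus\wedge^3 V\oplus M$, where $M$ is the sum of the two copies of the Schur summand $S^{(2,1)}(V)$, and with the fact that $\wedge^3 V=k\,\Omega$ is one-dimensional with $\s^{\otimes 3}(\Omega)=\det(\s)\,\Omega$ for every $\s\in\GL(V)$. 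Writing $\t$ for the cyclic permutation of tensor factors, the operator $c$ of the statement is the cyclic sum $1+\t+\t^{2}$; since $\t$ acts as $+1$ on $\Sym^3 V\oplus\wedge^3 V$ and with eigenvalues $\omega,\omega^{2}$ (primitive cube roots of unity) on $M$, one has $c(\sfw)=3(\sfw_{s}+\sfw_{a})$, where $\sfw_{s}$ and $\sfw_{a}$ denote the $\Sym^3 V$- and $\wedge^3 V$-components of $\sfw$. Hence condition (2), $c(\sfw)\in\Sym^3 V$, is equivalent to the vanishing of $\sfw_{a}$.

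For $(1)\Rightarrow(2)$ I would use the alternation map $\Alt\colon V^{\otimes 3}\to\wedge^3 V$. It is $\GL(V)$-equivariant, so $\Alt(\s^{\otimes 3}x)=\det(\s)\,\Alt(x)$ for all $x$ and all $\s\in\GL(V)$. Applying this to $x=\sfw$ and invoking Theorem \ref{thm.msss}, namely $\s^{\otimes 3}(\sfw)=\hdet(\s)\,\sfw$, yields $\big(\hdet(\s)-\det(\s)\big)\,\Alt(\sfw)=0$ for every $\s\in\Aut(S)$. Thus if some $\s$ has $\hdet(\s)\neq\det(\s)$, then $\Alt(\sfw)=0$; since $\Alt$ restricts to an isomorphism on $\wedge^3 V$ and annihilates $\Sym^3 V\oplus M$, this says exactly $\sfw_{a}=0$, i.e. (2) holds. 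The same map handles the easy half of $(2)\Leftrightarrow(3)$: condition (3), $R=\pd^1\sfw\subseteq\Sym^2 V$, is equivalent to $\sfw$ being symmetric in its last two tensor slots, and any tensor symmetric in two slots is killed by $\Alt$, so again $\sfw_{a}=0$ and $(3)\Rightarrow(2)$.

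The converse $(2)\Rightarrow(3)$ is where the twisted-superpotential structure enters. The defining identity of Definition \ref{de.tw.spp}, combined with Corollary \ref{cor.sp}, expresses $\t(\sfw)$ as the image of $\sfw$ under an automorphism inserted in a single tensor factor (a power of the Nakayama automorphism $\nu$, with the sign $(-1)^{d-1}=+1$ for $d=3$). Since the $\t$-invariant subspace of $V^{\otimes 3}$ is precisely $\Sym^3 V\oplus\wedge^3 V$, this identity together with $\sfw_{a}=0$ should force $\sfw\in\Sym^3 V$, and a fully symmetric tensor is in particular symmetric in slots $2,3$, giving (3). The point requiring care is the bookkeeping of the Nakayama twist: I must check that the twist does not obstruct the conclusion $\sfw\in\Sym^3 V$, which I expect to reduce to showing that under (2) the relevant twist acts trivially on the line $k\sfw$ (equivalently, that the symmetric situation forces the superpotential to be untwisted on the nose).

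The main obstacle is $(3)\Rightarrow(1)$: producing an explicit $\s\in\Aut(S)$ with $\hdet(\s)\neq\det(\s)$. By the previous step I may assume $\sfw\in\Sym^3 V$, so by Theorem \ref{thm.ws} the group $\Aut(S)=\{\s\in\GL(V)\mid \Sym^3(\s)(k\sfw)=k\sfw\}$ is the projective stabiliser of the cubic form on $V^{*}$ determined by $\sfw$, and for such $\s$ the value $\hdet(\s)$ is the scalar $\lambda$ with $\Sym^3(\s)(\sfw)=\lambda\sfw$. I then need some $\s$ in this stabiliser with $\det(\s)\neq\lambda$. The model case is $S=k_{-1}[x,y,z]$, where $\sfw=\sum_{\pi\in S_3}x_{\pi(1)}x_{\pi(2)}x_{\pi(3)}$ corresponds to the cubic $xyz$ and the coordinate transposition $x\leftrightarrow y$ lies in $\Aut(S)$, fixes $\sfw$ (so $\hdet=1$) yet has $\det=-1$. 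To treat all algebras satisfying (3) uniformly I would invoke the Artin--Schelter classification of three-dimensional quadratic AS-regular algebras: the constraint $R\subseteq\Sym^2 V$ cuts out a short sublist, and for each member I would exhibit a symmetry of the associated cubic form (a coordinate transposition or reflection) of determinant $-1$ whose scalar on $\sfw$ is $+1$, thereby witnessing (1). A classification-free alternative worth pursuing uses $\hdet(\nu)=1$ for the Nakayama automorphism $\nu$: whenever $\det(\nu)\neq 1$ this already gives (1), leaving only the cases with $\det(\nu)=1$ (such as $k_{-1}[x,y,z]$) to be handled by an explicit reflection. Finding a single automorphism that works across the entire family is where I expect the real difficulty to lie.
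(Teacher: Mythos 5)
Your steps $(1)\Rightarrow(2)$ and $(3)\Rightarrow(2)$ are sound: the first is exactly the paper's Theorem \ref{thm.hdd} (your $\GL(V)$-equivariant alternation map is the paper's functional $\mu$, via $c=s+a$), and your direct $\Alt$-argument for $(3)\Rightarrow(2)$ is if anything more self-contained than the paper, which cites \cite[\S3.1]{MS} for $(2)\Leftrightarrow(3)$. The first genuine gap is $(2)\Rightarrow(3)$: the ``bookkeeping of the Nakayama twist'' you postpone is not a technicality but the crux, and in the generality in which you have set the problem up the implication is \emph{false}. Take $S$ the skew polynomial algebra with relations $yz+zy$, $zx-t\,xz$, $xy-t^{-1}yx$; a direct computation of $W_3$ gives $\sfw_S=t\,x(yz+zy)+t^{-1}y(zx-t\,xz)-z(xy-t^{-1}yx)=t(xyz+xzy)+t^{-1}(yzx+zyx)-(yxz+zxy)$, whose signed coefficient sum $(t+t^{-1}-1)-(t+t^{-1}-1)$ vanishes, so $a(\sfw_S)=0$ and $c(\sfw_S)=s(\sfw_S)\in\Sym^3V$, i.e.\ (2) holds; yet for $t\neq -1$ the relation $zx-t\,xz$ is not in $\Sym^2V$, so (3) fails (and $\nu$ is a nontrivial diagonal map, so the twist genuinely obstructs). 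The point you missed is that the paper's Theorem \ref{thm.cwhd} carries the hypothesis, suppressed in the introduction's restatement, that $S=J(\sfw)$ is \emph{Calabi--Yau}; under that hypothesis $c(\sfw)=\sfw_S$ and $\phi(\sfw_S)=\sfw_S$ (Corollary \ref{cor.sp} with $d=3$, Theorem \ref{thm.cycp}), so (2) literally says $\sfw_S\in\Sym^3V$ and your step becomes immediate. Without isolating this hypothesis your reduction to ``the twist acts trivially on $k\sfw$'' cannot be carried out.

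The second gap is $(3)\Rightarrow(1)$ (equivalently, after the reduction, $(2)\Rightarrow(1)$), which you leave as a plan; this is precisely where the paper's proof has its content. Knowing $c(\sfw)\in\Sym^3V$, the paper invokes the normal forms of \cite[\S1.8.4]{MS}: in a suitable basis $c(\sfw)=xyz+yzx+zxy+xzy+yxz+zyx+\alpha x^3+\beta y^3+\gamma z^3$ with $(\alpha,\beta,\gamma)\in\{(1,0,0),(1,1,0),(\alpha,\alpha,\alpha)\}$, and then exhibits for each family an explicit permutation matrix $\s$ of determinant $-1$ fixing the superpotential, whence $\s\in\Aut(S)$ by Theorem \ref{thm.ws} and $\hdet(\s)=1\neq-1=\det(\s)$ by Theorem \ref{thm.msss}. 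You produce such a $\s$ only for $k_{-1}[x,y,z]$ and defer the rest to an unspecified use of the Artin--Schelter classification; note also that your fallback via $\hdet(\nu)=1$ is vacuous in the setting where the theorem actually lives, since there $S$ is Calabi--Yau and $\nu=\id$. In sum: your $(1)\Rightarrow(2)$ and $(3)\Rightarrow(2)$ stand, but $(2)\Rightarrow(3)$ fails as proposed without the Calabi--Yau hypothesis you never identify, and $(3)\Rightarrow(1)$ is missing its essential construction, namely the normal-form list and the explicit determinant-$(-1)$ symmetries.
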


\subsection{Results II} 
We continue to use the notation in \S\ref{ssect.notn1}.

\begin{theorem} 
\label{thm.BM.nak-intro} 
\textnormal{(Thm. 4.8)}
The Nakayama automorphism of $\Ext^*_S(k,k)$ is  $(\e^{d+1}\nu )^!$.\footnote{See \S\S \ref{sse.m.Koszul} and \ref{ssect.automs} for the definition of $(\cdot )^!$.}
\end{theorem}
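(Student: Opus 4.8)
The plan is to compute the Nakayama automorphism $\mu$ of the graded Frobenius algebra $E=\Ext^*_S(k,k)$ from its Frobenius form, and to recognise the answer through the superpotential $\sfw$ and the operation $(\cdot)^!$. Recall that $m$-Koszul AS-regularity makes $E$ a finite-dimensional graded Frobenius algebra with one-dimensional socle $E^d\cong k$ and perfect Yoneda pairing $E^i\otimes E^{d-i}\to E^d$; after fixing an isomorphism $\theta\colon E^d\xrightarrow{\sim}k$, the Nakayama automorphism $\mu$ is the unique graded algebra automorphism satisfying $\theta(ab)=\theta\big(b\,\mu(a)\big)$ for all homogeneous $a,b$ of complementary degree. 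The whole task is to evaluate this ``commutativity defect'' of the pairing.

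First I would record the link between $\sfw$ and $\theta$. The element $\sfw$ spanning $\bigcap_{s+t=\ell-m}V^{\otimes s}\otimes R\otimes V^{\otimes t}$ is exactly the Koszul-dual datum that produces the top class of $E$ and hence the Frobenius functional $\theta$; concretely, under the pairing between the Koszul complex of $S$ and its dual, $\sfw$ corresponds to the fundamental class in $E^d$. I would make this identification precise and functorial, so that a graded automorphism $\s\in\Aut(S)$ acts on the fundamental class by $\s^{\otimes\ell}(\sfw)$ on one side and by $\s^!$ on $E^d$ on the other; this is where the definition of $(\cdot)^!$ from \S\ref{ssect.automs} enters, and it is essentially the $\Ext$-algebra counterpart of Theorem~\ref{thm.msss}.

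Next I would invoke the twisted Calabi--Yau structure of $S$ in its superpotential form: generalising Corollary~\ref{cor.sp} (whose Calabi--Yau case is $\nu=\id$), the element $\sfw$ is a $\phi$-twisted superpotential in the sense of Definition~\ref{de.tw.spp}, with twist $\phi=\e^{d+1}\nu$ regarded in $\GL(V)$. The defining cyclic-invariance identity of a $\phi$-twisted superpotential, when transported through the identification of $\sfw$ with $\theta$, becomes precisely the Frobenius identity $\theta(ab)=\theta\big(b\,\phi^!(a)\big)$: the cyclic rotation of the $\ell$ tensor slots implements the reversal $a\otimes b\mapsto b\otimes a$ in the pairing, while the factor $\phi$ applied to the rotated slot becomes $\phi^!$ applied to $a$. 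Comparing with the characterisation of $\mu$ then yields $\mu=\phi^!=(\e^{d+1}\nu)^!$.

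The main obstacle is the sign bookkeeping hidden in the word ``precisely'' above, i.e.\ isolating the factor $\e^{d+1}$. Reversing a product $E^i\otimes E^{d-i}\to E^d$ carries the Koszul sign $(-1)^{i(d-i)}$, and since $i(i+1)$ is even this equals $(-1)^{i(d+1)}$, the action one expects from $(\e^{d+1})^!$; but matching this cohomological sign with the action of the internally-graded scalar $\e^{d+1}\in\GL(V)$ on each $E^i$ (which for $m\ge 3$ is not generated in cohomological degree $1$), together with the homological shift in the AS-regular isomorphism $\RHom_S(k,S)\cong k(\ell)[-d]$ and the conventions built into $(\cdot)^!$, is exactly the delicate step. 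The Calabi--Yau specialisation $\nu=\id$, giving $\mu=(\e^{d+1})^!$, together with Theorem~\ref{thm.msss} for the action on $\sfw$, serve as consistency checks that fix all conventions.
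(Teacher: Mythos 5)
Your proposal follows essentially the same route as the paper: the paper's proof of Theorem \ref{thm.BM.nak} likewise evaluates the Frobenius pairing $\<\xi,\eta\>=(-1)^{i(d-i)}(\xi\eta)(\sfw)$ on $\<\eta,(\e^{d+1}\nu)^!(\xi)\>$, using cyclic rotation to swap the factors and the twisted-superpotential identity of Theorem \ref{thm.mnw} iterated to get $\phi^{j}(\sfw)=(-1)^{(d+1)j}\big((\nu^{-1})^{\otimes j}\otimes\id^{\otimes \ell-j}\big)(\sfw)$, which cancels against the action of $(\e^{d+1}\nu)^!$ on $E^i\cong S^!_{\rho(i)}$. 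The one correction to your sign bookkeeping: the Koszul sign $(-1)^{i(d-i)}$ is symmetric under $i\leftrightarrow d-i$ and simply cancels from the defining relation $\<\xi,\eta\>=\<\eta,\mu(\xi)\>$, so the factor $\e^{d+1}$ does not come from reversing the product but from iterating the $(-1)^{d+1}$ of the twisted superpotential $\rho(i)$ times, giving $(-1)^{(d+1)\rho(i)}$ — which matches the action of $(\e^{d+1})^!$ on $E^i$ exactly, with no need to compare $\rho(i)$ to $i$ (though in fact $(-1)^{(d+1)\rho(i)}=(-1)^{(d+1)i}$ always, since $d$ is odd when $m\ge 3$ and $\rho(i)=i$ when $m=2$).
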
 

Lu, Mao, and Zhang, have proved a general result showing that the Nakayama automorphisms of
suitable algebras $A$ belong to the center of the full automorphism group   \cite[Theorem 4.2]{LMZ}. 
Using Theorem \ref{thm.BM.nak-intro}, we give a different proof of the fact  that the Nakayama automorphism of $S$ is in the center of $\Aut(S)$ (Corollary \ref{cor.mnc}).

The next two results are non-noetherian $m$-Koszul versions of   \cite [Thm.5.3]{RRZ2} 
and \cite[Thm.5.4(a)]{RRZ}. Our proofs are different and are, perhaps, simpler.  

\begin{theorem} \textnormal{(Theorem \ref{thm.hdn})}
$\hdet (\nu)=1$. 
\end{theorem}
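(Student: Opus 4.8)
The plan is to reduce the claim to a single computation with the superpotential $\sfw$ and then invoke Theorem~\ref{thm.msss}. Since $\nu$ is a graded algebra automorphism of $S$, under the convention of \S\ref{ssect.notn1}(4) it lies in $\Aut(S)\subseteq\GL(V)$, so Theorem~\ref{thm.msss} applies and gives $\nu^{\otimes\ell}(\sfw)=\hdet(\nu)\,\sfw$. Because $\sfw$ spans the one-dimensional space of \S\ref{ssect.notn1}(7) (Proposition~\ref{prop.w}), the scalar $\hdet(\nu)$ is determined by this one equation. Thus it suffices to prove the twist-free identity
\[
\nu^{\otimes\ell}(\sfw)=\sfw .
\]

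To produce this identity I would use the twisted-superpotential structure of $\sfw$. By Dubois--Violette's theorem $S\cong\cD(\sfw,i)$, and $\sfw$ satisfies a $\nu$-twisted cyclic-invariance relation: writing $\tau$ for the cyclic permutation of $V^{\otimes\ell}$ that moves the last tensor factor to the front, $\sfw$ is fixed by the twisted-cyclic operator $\theta=(\nu\otimes\id^{\otimes(\ell-1)})\circ\tau$. This is the non-Calabi--Yau refinement of Definition~\ref{de.tw.spp} and Corollary~\ref{cor.sp}, in which the Calabi--Yau case $\nu=\id$ recovers the $(-1)^{d+1}$-twisted superpotential condition. The key algebraic point is that applying $\theta$ exactly $\ell$ times takes one full turn around the cycle: since $\tau^\ell=\id$ and $\tau\,(\id^{\otimes(j-1)}\otimes\nu\otimes\id^{\otimes(\ell-j)})=(\id^{\otimes j}\otimes\nu\otimes\id^{\otimes(\ell-j-1)})\,\tau$, the single copy of $\nu$ is transported successively through all $\ell$ tensor slots, and one computes $\theta^\ell=\nu^{\otimes\ell}$. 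Applying this to the fixed vector $\sfw$ gives $\nu^{\otimes\ell}(\sfw)=\theta^\ell(\sfw)=\sfw$, which is exactly what is needed.

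The step I expect to be the main obstacle is the sign bookkeeping in the previous paragraph: one must check that the scalar accumulated in passing from $\theta(\sfw)=\sfw$ to $\theta^\ell(\sfw)=\nu^{\otimes\ell}(\sfw)$ is exactly $1$, and that the propagated automorphism is precisely $\nu^{\otimes\ell}$ rather than $(\e^{d+1}\nu)^{\otimes\ell}$ or a sign twist thereof. This requires pinning down the exact form of the twisted-superpotential relation for $S$—in particular how the factor $\e^{d+1}$ appearing in Theorem~\ref{thm.BM.nak-intro} and in Corollary~\ref{cor.sp} interacts with the cyclic sign of $\tau$—and verifying that these contributions cancel over a full cycle of length $\ell$. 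Once the relation is normalized so that $\theta^\ell=\nu^{\otimes\ell}$, the identity $\nu^{\otimes\ell}(\sfw)=\sfw$, and hence $\hdet(\nu)=1$, follow immediately from Theorem~\ref{thm.msss}.
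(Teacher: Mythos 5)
Your reduction via Theorem~\ref{thm.msss} is the right move, and the cyclic bookkeeping you propose does work, but only after one correction: by Theorem~\ref{thm.mnw} the operator that fixes $\sfw$ is $\theta=\bigl((-1)^{d+1}\nu\otimes\id^{\otimes \ell-1}\bigr)\circ\phi$, not $(\nu\otimes\id^{\otimes \ell-1})\circ\phi$ as you first assert --- the latter has eigenvalue $(-1)^{d+1}$ on $\sfw$, not $1$. With the commutation rule you state, one gets $\theta^{\ell}=(-1)^{(d+1)\ell}\,\nu^{\otimes\ell}$, hence $\nu^{\otimes\ell}(\sfw)=(-1)^{(d+1)\ell}\sfw$ and, by Theorem~\ref{thm.msss}, $\hdet(\nu)=(-1)^{(d+1)\ell}$. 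This is precisely the induction identity $\phi^{j}(\sfw)=(-1)^{(d+1)j}\bigl((\nu^{-1})^{\otimes j}\otimes\id^{\otimes \ell-j}\bigr)(\sfw)$ that the paper establishes inside the proof of Theorem~\ref{thm.BM.nak}, specialized at $j=\ell$, so the computation itself is sound.

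The genuine gap is your final step: the sign $(-1)^{(d+1)\ell}$ does not ``cancel over a full cycle'' by bookkeeping alone, and the twist-free identity $\nu^{\otimes\ell}(\sfw)=\sfw$ you aim for is exactly equivalent to the unproved assertion $(-1)^{(d+1)\ell}=1$. This requires an external input that your proposal never names: Lemma~\ref{lem.dl}, which says that if $m=2$ then $\ell=d$ (so $(d+1)\ell=d(d+1)$ is even), while if $m\ge 3$ then $d$ is odd. Without it, your argument leaves open the value $\hdet(\nu)=-1$ in a hypothetical case with $d$ even and $\ell$ odd. For comparison, the paper reaches the same penultimate equation by a different route: it first proves $\hdet(\e^{d+1}\nu)=1$ using the Frobenius pairing on $E=\Ext^*_S(k,k)$ (Theorem~\ref{thm.BM.nak} together with Lemma~\ref{lem.ndxe}) and then strips off $\e$ via Theorem~\ref{thm.msss}, but it too must invoke Lemma~\ref{lem.dl} at the end. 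Once you add that parity lemma, your route --- which bypasses the Ext-algebra and its Nakayama automorphism entirely and works directly with the twisted superpotential of Theorem~\ref{thm.mnw} --- is complete and arguably more elementary than the paper's.
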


\begin{theorem} \textnormal{(Theorem \ref{thm.msn})} 
Let $\s\in \Aut(S)$. The Zhang twist, $S^{\s}$, is also an $m$-Koszul AS-regular algebra of Gorenstein parameter $\ell$, and 
its Nakayama automorphism is $\hdet(\s)^{-1}\s^{\ell}\nu$.  
\end{theorem}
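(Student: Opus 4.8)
The plan is to reduce everything to linear algebra on $V^{\otimes \ell}$ via the superpotential description, treating the three assertions (regularity, Gorenstein parameter, Nakayama automorphism) in that order. Recall that $S^{\sigma}$ has the same underlying graded vector space as $S$, with twisted product $a\ast b=a\cdot\sigma^{|a|}(b)$ on homogeneous elements (I will fix the paper's sign convention for the twist). Since $S$ is generated in degree $1$ by $V$, so is $S^{\sigma}$, and computing $v_1\ast\cdots\ast v_m$ shows that the degree-$m$ multiplication map of $S^{\sigma}$ is the ordinary one precomposed with the invertible diagonal operator $\tau=\id\otimes\sigma\otimes\cdots\otimes\sigma^{m-1}$ on $V^{\otimes m}$. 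Hence $S^{\sigma}\cong TV/(R^{\sigma})$ with $R^{\sigma}=\tau^{-1}(R)$, and the relations are still homogeneous of degree $m$.

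For the first two assertions I would invoke Zhang's theorem that twisting induces an equivalence $\GrMod S\simeq\GrMod S^{\sigma}$ carrying $k$ to $k$; this preserves $\gldim$, the AS-Gorenstein condition \eqref{AS-Gor-cond}, and the property that each $\Ext^i_{S^{\sigma}}(k,k)$ is concentrated in a single degree, so $S^{\sigma}$ is again $m$-Koszul AS-regular. The Gorenstein parameter is the degree of the superpotential, so the efficient route is to compute $\sfw_{S^{\sigma}}$ directly. Using $\sigma^{\otimes m}(R)=R$ (valid since $\sigma\in\Aut(S)$) together with the observation that, on positions $s+1,\dots,s+m$, the total twist operator $\Theta_{\ell}:=\id\otimes\sigma\otimes\cdots\otimes\sigma^{\ell-1}$ acts as $(\sigma^{s})^{\otimes m}\circ\tau$, one checks that $\Theta_{\ell}^{-1}$ carries $V^{\otimes s}\otimes R\otimes V^{\otimes t}$ onto $V^{\otimes s}\otimes R^{\sigma}\otimes V^{\otimes t}$ for every $s+t=\ell-m$. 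Intersecting over $s,t$ and using invertibility of $\Theta_{\ell}$ gives $\sfw_{S^{\sigma}}=\Theta_{\ell}^{-1}(\sfw_{S})$ up to a nonzero scalar; in particular the relevant intersection for $S^{\sigma}$ is again one-dimensional and lies in degree $\ell$, so Proposition \ref{prop.w} yields Gorenstein parameter $\ell$.

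The core is the Nakayama computation. First I would record two facts: (i) $\sigma\in\Aut(S^{\sigma})$, since $\sigma^{\otimes m}$ commutes with the diagonal operator $\tau^{-1}$ and fixes $R$; and (ii) because $\sigma^{\otimes\ell}$ commutes with $\Theta_{\ell}$, Theorem \ref{thm.msss} applied to $S$ gives $\sigma^{\otimes\ell}(\sfw_{S^{\sigma}})=\hdet(\sigma)\,\sfw_{S^{\sigma}}$, so in particular $\hdet_{S^{\sigma}}(\sigma)=\hdet_{S}(\sigma)$. Next I would use the twisted-superpotential characterization of the Nakayama automorphism underlying Corollary \ref{cor.sp} and Definition \ref{de.tw.spp}: for an $m$-Koszul AS-regular algebra $A$ of dimension $d$ one has $\phi(\sfw_{A})=(-1)^{d+1}(\nu_{A}\otimes\id^{\otimes(\ell-1)})(\sfw_{A})$, where $\phi$ is the cyclic rotation of $V^{\otimes\ell}$, and $\nu_{A}$ is pinned down uniquely by this relation because $\sfw_{A}$ is non-degenerate in its first tensor slot. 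Applying $\phi$ to $\sfw_{S^{\sigma}}=\Theta_{\ell}^{-1}(\sfw_{S})$ and using the commutation identity $\phi\,\Theta_{\ell}^{-1}=\Theta_{\ell}^{-1}M^{-1}\phi$, with $M^{-1}=\sigma^{1-\ell}\otimes\sigma^{\otimes(\ell-1)}$, I would substitute the relation for $S$, conjugate back by $\Theta_{\ell}$, and simplify. The first-factor operator becomes $\sigma^{\ell}\nu$ once $\nu$ is commuted past the powers of $\sigma$ (here I use that $\nu$ is central in $\Aut(S)$, Corollary \ref{cor.mnc}), while the accompanying global $\sigma^{\otimes\ell}$ collapses to the scalar $\hdet(\sigma)^{-1}$ by step (ii). Reading off the first-factor operator via non-degeneracy of $\sfw_{S^{\sigma}}$ then gives $\nu_{S^{\sigma}}=\hdet(\sigma)^{-1}\sigma^{\ell}\nu$. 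As a sanity check, $\nu$ and $\sigma$ both lie in $\Aut(S^{\sigma})$ (using centrality again, $\nu^{\otimes m}$ commutes with $\tau^{-1}$), so this expression is a genuine graded automorphism of $S^{\sigma}$.

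I expect the main obstacle to be bookkeeping of conventions rather than any conceptual difficulty: the exact exponents ($\sigma^{\ell}$ versus $\sigma^{-\ell}$) and the sign of the power of $\hdet(\sigma)$ depend on the chosen sign convention for the Zhang twist and on the direction in which the automorphism enters the twisted-superpotential relation defining $\nu$. A carelessly chosen pair of conventions produces $\hdet(\sigma)^{+1}\sigma^{-\ell}\nu$ instead, so these must be fixed to be mutually compatible and to agree with the paper. The three genuinely substantive inputs are all already available: Theorem \ref{thm.msss} (to convert $\sigma^{\otimes\ell}$ into the scalar $\hdet(\sigma)$), the centrality of $\nu$ from Corollary \ref{cor.mnc} (to merge $\nu$ with the powers of $\sigma$ into a single first-factor operator), and the non-degeneracy of $\sfw$ (to conclude that the resulting first-factor operator literally is the Nakayama automorphism, not merely an operator agreeing with it on the single vector $\sfw_{S^{\sigma}}$).
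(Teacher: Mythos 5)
Your proposal is correct and follows essentially the paper's own route: you identify $\sfw_{S^{\sigma}}$ as a twist of $\sfw_{S}$ (your $\Theta_{\ell}^{-1}(\sfw_{S})$ agrees with the paper's $(\sigma^{\ell-1}\otimes\cdots\otimes\sigma\otimes\id)(\sfw_{S})$ up to the immaterial scalar $\hdet(\sigma)^{\ell-1}$, since $\sigma^{\otimes\ell}(\sfw_S)=\hdet(\sigma)\sfw_S$), then check that $\hdet(\sigma)^{-1}\sigma^{\ell}\nu$ satisfies the twisted-superpotential relation and conclude by the uniqueness in Theorem \ref{thm.mnw} --- exactly the content of the paper's Propositions \ref{prop.q3}, \ref{prop.sdw} and \ref{prop.wst}. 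Your additional appeal to centrality of $\nu$ (Corollary \ref{cor.mnc}) in the Nakayama computation is an artifact of your ordering of the twist operators --- the paper's arrangement in Proposition \ref{prop.wst} makes $\nu\nu^{-1}$ cancel directly without commuting --- but it is available and harmless.
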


The following theorem extends \cite [Cor. 9.3]{V} and \cite [Prop. 6.5]{BM} to the higher dimensional case. 

\begin{theorem} \textnormal{(Theorem \ref{thm.mnw2})} 
Let $x_1, \dots, x_n$ be  a basis  for $V=S_1$. Let  ${\bf x}^t=(x_1, \dots, x_n)$.
There is a unique  $n\times n$ matrix  $\sM$ with entries in $V^{\otimes \ell-2}$ such that  $\sfw={\bf x}^t\sM{\bf x}$
 and a unique  $Q\in \GL_n(k)$ such that $({\bf x}^t\sM)^t=Q\sM{\bf x}$.
The Nakayama automorphism of $S$ is the unique $\nu \in \GL(V)$ such that $\nu =(-1)^{d+1}Q^{-t}\in \GL(n, k)$ with respect to this basis.  In particular, $S$ is Calabi-Yau if and only if $Q=(-1)^{d+1}\Id$.  
\end{theorem}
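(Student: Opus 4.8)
The plan is to treat the statement in four stages: the formal existence and uniqueness of $\sM$, the existence and uniqueness of $Q$, the identification of $Q$ with the Nakayama data, and finally the Calabi--Yau specialisation.

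First I would record the linear algebra. Since $\ell\ge m\ge 2$, the decomposition $V^{\otimes\ell}=V\otimes V^{\otimes\ell-2}\otimes V$ together with the basis $x_1,\dots,x_n$ lets me write $\sfw$ uniquely as $\sfw=\sum_{i,j}x_i\otimes\sM_{ij}\otimes x_j$ with $\sM_{ij}\in V^{\otimes\ell-2}$; this is exactly $\sfw={\bf x}^t\sM{\bf x}$, and uniqueness is immediate because the $x_i\otimes(-)\otimes x_j$ are independent. I would then identify the two vectors occurring in the claim as first-order partial derivatives of $\sfw$: contracting the first tensor slot of $\sfw$ against $x_i^*$ yields $(\sM{\bf x})_i=\sum_j\sM_{ij}x_j\in V^{\otimes\ell-1}$, the first-order left partial derivative, while contracting the last slot against $x_j^*$ yields $({\bf x}^t\sM)_j=\sum_i x_i\sM_{ij}$, the first-order right partial derivative. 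The preregularity (non-degeneracy) of $\sfw$ as the defining form of the AS-regular derivation-quotient algebra $\cD(\sfw,\ell-m)$ — part of the structure theory behind Proposition \ref{prop.w} — says precisely that the left partials $\{(\sM{\bf x})_i\}_i$ are linearly independent in $V^{\otimes\ell-1}$, and by the symmetric argument so are the right partials.

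Next I would produce $Q$ from the twisted superpotential property. Let $\tau\colon V^{\otimes\ell}\to V^{\otimes\ell}$ be the cyclic rotation $v_1\otimes\cdots\otimes v_\ell\mapsto v_\ell\otimes v_1\otimes\cdots\otimes v_{\ell-1}$. That $\sfw$ is a twisted superpotential means $\tau(\sfw)=(\phi\otimes\id_{V^{\otimes\ell-1}})(\sfw)$ for some $\phi\in\GL(V)$ (Definition \ref{de.tw.spp}), with Corollary \ref{cor.sp} the special case $\phi=(-1)^{d+1}\id$. Writing this out in the basis, $\tau(\sfw)=\sum_p x_p\otimes({\bf x}^t\sM)_p$ whereas $(\phi\otimes\id)(\sfw)=\sum_p x_p\otimes\sum_i\Phi_{pi}(\sM{\bf x})_i$ with $\Phi$ the matrix of $\phi$; comparing the coefficient of $x_p$ gives $({\bf x}^t\sM)^t=\Phi\,\sM{\bf x}$. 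Thus $Q:=\Phi$ satisfies the required identity, lies in $\GL_n(k)$ because $\phi$ is invertible, and is the \emph{unique} such matrix because the right partials $\{(\sM{\bf x})_i\}_i$ are linearly independent. The crux is then to identify $Q$ with the Nakayama automorphism, i.e.\ to show that $\phi$ has matrix $(-1)^{d+1}N^{-t}$, where $N$ is the matrix of $\nu$; equivalently $\nu=(-1)^{d+1}Q^{-t}$. Here I would invoke the minimal bimodule (Koszul) resolution of $S$ whose top term is $S\otimes k\sfw\otimes S$ and whose differentials are obtained by threading $\sfw$ through deletions of tensor factors; the twisted Calabi--Yau/AS-regular condition is the self-duality of this complex under $\RHom_{S^e}(-,S^e)$, up to the shift $[d]$, a grading shift by $\ell$, and a twist of the bimodule structure by $\nu$. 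Reading this self-duality off at the top of the complex pairs the rotation $\tau$ against the Nakayama twist and produces, after transport through the basis, the relation $\tau(\sfw)=(\phi\otimes\id)(\sfw)$ with $\phi$ of matrix $(-1)^{d+1}N^{-t}$; the sign $(-1)^{d+1}$ is the Koszul sign of the degree-$d$ shift, while the inverse and the transpose come from dualising the bimodule and from interchanging left- and right-contraction. Combined with the previous computation this gives $Q=\Phi=(-1)^{d+1}N^{-t}$, hence $\nu=(-1)^{d+1}Q^{-t}$. As consistency checks I would verify that this $\nu$ satisfies $\nu^{\otimes\ell}(\sfw)=\sfw$, forced by Theorem \ref{thm.msss} together with $\hdet(\nu)=1$ (Theorem \ref{thm.hdn}), and that it reduces to Corollary \ref{cor.sp} when $\nu=\id$. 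The bookkeeping of these three decorations — sign, inverse, transpose — is the main obstacle; everything else is formal.

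Finally, the Calabi--Yau criterion is immediate: $S$ is Calabi--Yau iff $\nu=\id$, and since $(-1)^{d+1}=\pm1$ is its own inverse and a scalar matrix is its own transpose, $\nu=(-1)^{d+1}Q^{-t}=\id$ is equivalent to $Q=(-1)^{d+1}\Id$, recovering Corollary \ref{cor.sp}.
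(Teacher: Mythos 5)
Your first two stages are sound and, in fact, reproduce by hand exactly what the paper outsources to \cite[Prop.~2.11]{AS}: the decomposition $V^{\otimes\ell}=V\otimes V^{\otimes\ell-2}\otimes V$ gives existence and uniqueness of $\sM$; the entries of $\sM{\bf x}$ and $({\bf x}^t\sM)^t$ are the first-order left and right partials; their linear independence (preregularity of $\sfw$, equivalently $\dim_k S^!_{\ell-1}=\dim_k V$ in the minimal resolution) gives uniqueness of $Q$; and comparing $\tau(\sfw)=\sum_p x_p\otimes({\bf x}^t\sM)_p$ with $(\phi\otimes\id)(\sfw)$ correctly converts the matrix identity into a twisted-superpotential identity. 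The genuine gap is at the step you yourself call ``the crux'': the identification of the twisting automorphism with $(-1)^{d+1}\nu^{-1}$. You propose to extract this from self-duality of the minimal bimodule resolution under $\RHom_{S^e}(-,S^e)$, but you never execute that argument — you only assert that the shift contributes the sign, dualization the inverse, and left/right contraction the transpose, and you concede that ``the bookkeeping of these three decorations is the main obstacle.'' An unexecuted sketch of precisely the hard step is not a proof. Moreover, this step is exactly Theorem \ref{thm.mnw} of the paper, already proved (via the Frobenius structure on $E=\Ext^*_S(k,k)$ and Lemma \ref{lem.BM}) and available to cite: it says $(\nu\otimes\id^{\otimes\ell-1})\phi(\sfw)=(-1)^{d+1}\sfw$, i.e.\ your $\phi_{\rm twist}=(-1)^{d+1}\nu^{-1}$. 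Citing it closes the gap immediately, and the resulting argument is the paper's proof, which is two lines: \cite[Prop.~2.11]{AS} translates $({\bf x}^t\sM)^t=Q\sM{\bf x}$ into $(Q^{-t}\otimes\id^{\otimes\ell-1})(\phi(\sfw))=\sfw$, and Theorem \ref{thm.mnw} identifies $Q^{-t}$ with $(-1)^{d+1}\nu$.

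There is also a concrete inconsistency hiding in the bookkeeping you deferred. With your stated convention ($\Phi$ the matrix of $\phi$ defined by $\phi(x_i)=\sum_p\Phi_{pi}x_p$, i.e.\ coordinate columns), your own computation yields $Q=\Phi$ and hence, from $\phi_{\rm twist}=(-1)^{d+1}\nu^{-1}$, the relation $Q=(-1)^{d+1}N^{-1}$ — no transpose. Your asserted $Q=(-1)^{d+1}N^{-t}$ is only correct if $N$ is read in the opposite convention, namely $\nu({\bf x})=N{\bf x}$ (rows are coordinates of images), which is the convention implicit in the theorem's statement $\nu=(-1)^{d+1}Q^{-t}$ and in \cite{AS}. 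As written, your final display silently mixes the two conventions, which is exactly the transpose you acknowledged but never pinned down; a referee applying your column convention throughout would conclude you had proved the transposed (and, for non-symmetric $Q$, different) formula. Fixing this requires either declaring the row convention at the outset or carrying your column convention through and transposing at the end — either is routine, but it must be done, since the transpose is the only non-formal content of the final identity once Theorem \ref{thm.mnw} is granted.
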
 

\subsection{Methods}
We continue to use the notation in \S\ref{ssect.notn1}.

Almost all our results depend on Theorem \ref{thm.DV}  below which says that $S$ is completely determined by the 1-dimensional
subspace $k\sfw$ defined in  \S\ref{ssect.notn1}(7). 

\subsubsection{Derivation-quotient algebras}
Let $V$ be a vector space and $W$ a subspace of some tensor power,  $V^{\otimes p}$ say.
We introduce the following notation:
\begin{align*}
\pd W  & \; :=\; \big\{(\psi \otimes \id^{\otimes p-1})(\sfw) \; \big\vert \; \psi \in V^*, \, \sfw \in W\big\},
\\
\pd^{i+1}W  &\; :=\;  \pd(\pd^i W) \quad \hbox{for all $i \ge 0$,}
\\
\cD(W, i)  &\; :=\; TV/(\partial^iW).
\end{align*}
The space $\pd^iW$ appears in \cite[\S4]{DV} where it is denoted $W^{(p-i)}$. 

Let $\sfw\in V^{\otimes \ell}$. 
Following \cite{BSW}, we introduce the notation
$$
\cD(\sfw, i) \; :=\;  \cD(k\sfw, i)
$$
and call  $\cD(\sfw, i)$ the {\sf $i$-th order derivation-quotient algebra} of $\sfw$.

\begin{theorem}
[Dubois-Violette]
 \textnormal{\cite[Thm. 11]{DV}}
\label{thm.DV} 
In the notation of  \S\ref{ssect.notn1} 
$$
S\cong\cD(\sfw, \ell-m).
$$
\end{theorem}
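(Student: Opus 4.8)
The plan is to turn the claimed isomorphism into an equality of relation spaces. By definition $\cD(\sfw,\ell-m)=TV/(\pd^{\ell-m}(k\sfw))$, and since each application of $\pd$ lowers tensor degree by one we have $\pd^{\ell-m}(k\sfw)\subseteq V^{\otimes m}$; as $S=TV/(R)$ with $R\subseteq V^{\otimes m}$, it suffices to prove the equality of subspaces $\pd^{\ell-m}(k\sfw)=R$ of $V^{\otimes m}$, for then the two defining ideals coincide and the algebras are literally equal. The elementary fact I would isolate first is that for any subspace $W\subseteq V^{\otimes(\ell-m)}\otimes V^{\otimes m}$ the space $\pd^{\ell-m}W$ is the smallest $U\subseteq V^{\otimes m}$ with $W\subseteq V^{\otimes(\ell-m)}\otimes U$, namely the span of the right-hand tensor components of $W$. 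The inclusion $\pd^{\ell-m}(k\sfw)\subseteq R$ is then immediate: taking $s=\ell-m$ and $t=0$ in the intersection of \S\ref{ssect.notn1}(7) gives $\sfw\in V^{\otimes(\ell-m)}\otimes R$, so the minimal right support of $\sfw$ lies in $R$.

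For the reverse inclusion I would exploit the self-duality forced by AS-regularity. Write $W^{(n)}:=\bigcap_{s+t=n-m}V^{\otimes s}\otimes R\otimes V^{\otimes t}$, so that $W^{(m)}=R$ and $W^{(\ell)}=k\sfw$ is one-dimensional (Proposition \ref{prop.w}); these are the spaces indexing the minimal graded free resolution of $k$ provided by the $m$-Koszul hypothesis, under which $\Ext^2_S(k,k)\cong R^*$, $\Ext^{d-2}_S(k,k)\cong(W^{(\ell-m)})^*$, and $\Ext^d_S(k,k)\cong(k\sfw)^*$. Distributing $-\otimes V^{\otimes m}$ across the intersection defining $W^{(\ell)}$ shows $W^{(\ell)}\subseteq W^{(\ell-m)}\otimes V^{\otimes m}$, so the contraction
\[
T\colon (W^{(\ell-m)})^*\longrightarrow V^{\otimes m},\qquad \psi\longmapsto(\psi\otimes\id)(\sfw),
\]
is well defined, and because $\sfw$ already lies in $W^{(\ell-m)}\otimes V^{\otimes m}$ its image is precisely $\pd^{\ell-m}(k\sfw)$. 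Now AS-regularity is equivalent to $E=\Ext^*_S(k,k)$ being a Frobenius algebra with $E^i=0$ for $i>d$ and $E^d$ one-dimensional, so the Yoneda multiplication $E^2\otimes E^{d-2}\to E^d$ is a perfect pairing $R^*\otimes(W^{(\ell-m)})^*\to(k\sfw)^*$. The associated linear map $(W^{(\ell-m)})^*\to(R^*)^*=R$ is, up to a nonzero scalar, exactly $T$; perfectness makes it an isomorphism onto $R$. Since $\im T=\pd^{\ell-m}(k\sfw)\subseteq R$, this forces $\pd^{\ell-m}(k\sfw)=R$, as required.

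The main obstacle is the penultimate step: identifying the Yoneda product $E^2\otimes E^{d-2}\to E^d$ with contraction against $\sfw$. Making this precise amounts to writing the minimal (one-sided or bimodule) $m$-Koszul resolution explicitly, with its differentials expressed through the inclusions $W^{(n)}\hookrightarrow V\otimes W^{(n-1)}$ and left partial derivatives of $\sfw$, and then recognizing the AS-Gorenstein self-duality of that complex as the statement that the top differential is contraction by $\sfw$. Everything else—the reduction, the easy inclusion, and the dimension count $\dim R=\dim W^{(\ell-m)}$—is routine tensor bookkeeping. An alternative route, nearer to Dubois-Violette's original argument, bypasses the Yoneda-product identification by constructing the self-dual resolution directly from $\sfw$ and reading off $S\cong\cD(\sfw,\ell-m)$ together with the nondegeneracy of $\sfw$ simultaneously.
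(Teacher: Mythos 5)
The paper does not prove this statement at all: it is quoted verbatim from Dubois-Violette \cite[Thm.~11]{DV}, so there is no internal proof to compare against. Judged on its own merits, your argument is essentially correct and is a legitimate reconstruction. The reduction to the equality $\pd^{\ell-m}(k\sfw)=R$ (reading $\pd W$ as the \emph{span} of the contractions, which is what the ideal $(\pd^i W)$ sees anyway), the easy inclusion via $\sfw\in V^{\otimes(\ell-m)}\otimes R$, and the well-definedness of $T$ via $W_\ell\subseteq W_{\ell-m}\otimes V^{\otimes m}$ (intersections distribute across $-\otimes V^{\otimes m}$) are all fine. The step you flag as the main obstacle --- identifying the Yoneda product with contraction against $\sfw$ --- is in fact already supplied by results this paper quotes: Lemma \ref{lem.Wi} identifies $S^!_{i}\cong W_i^*$, Theorem \ref{thm.berger} gives $\Ext^i_S(k,k)\cong (W_{\rho(i)})^*$ with $\rho(2)=m$, $\rho(d-2)=\ell-m$, $\rho(d)=\ell$, and the He--Lu/Berger--Marconnet proposition in \S2.6 says the Yoneda product is the $S^!$ product up to the $*$-rule of (\ref{mult.on.cE}); since one of the two cohomological degrees in your pairing is $2$, which is even, the product survives the $m>2$ vanishing rule, and the $m=2$ sign $(-1)^{ij}$ is harmless for nondegeneracy. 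Under these identifications $(\eta*\xi)(\sfw)=\pm\,\xi\bigl((\eta\otimes\id^{\otimes m})(\sfw)\bigr)=\pm\,\xi(T(\eta))$, and nondegeneracy of the Frobenius pairing $E^{d-2}\times E^2\to E^d$ (Berger--Marconnet \cite[Cor.~5.12]{BM}, which predates and is independent of \cite{DV}, so there is no circularity) forces $\im T=R$ exactly as you say. One small orientation point to watch: since $\pd$ contracts on the \emph{left}, your $T$ corresponds to the product $E^{d-2}\cdot E^2\to E^d$ with the degree-$2$ class on the right, not the product $E^2\cdot E^{d-2}$ you wrote; the Frobenius pairing is nondegenerate in both slots, so nothing breaks, but the bookkeeping should match. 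Compared with Dubois-Violette's original route (building the self-dual resolution directly from $\sfw$, your suggested alternative), your Frobenius-algebra argument is shorter given the Berger--Marconnet machinery, at the cost of invoking that machinery as a black box; it also meshes well with this paper's \S4.3, where the same pairing $\langle\xi,\eta\rangle=\pm(\xi\eta)(\sfw)$ is the central tool.
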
 

All results in this paper concern automorphisms of $m$-Koszul AS-regular algebras so the following simple observation
plays a central role.   
 
 \begin{thm}
 \textnormal{(Theorem \ref{thm.ws})}
$\Aut\big(\cD(\sfw,i)\big)=\{\s \in \GL(V) \; | \; \s^{\otimes \ell}(k\sfw)=k\sfw\}$.
\end{thm}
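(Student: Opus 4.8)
The plan is to reduce the asserted identity to a statement about the relation space and to exploit the fact, recorded in \S\ref{ssect.notn1}(4), that $\Aut(S)$ is \emph{already} identified with $\{\s \in \GL(V) \mid \s^{\otimes m}(R)=R\}$. I would take $i=\ell-m$, so that $\cD(\sfw,i)=S$ by Theorem~\ref{thm.DV} and the relation space is $R=\partial^{\,\ell-m}(k\sfw)\subseteq V^{\otimes m}$. Everything then comes down to proving the equality of subgroups of $\GL(V)$
\[
\{\s \mid \s^{\otimes m}(R)=R\} \;=\; \{\s \mid \s^{\otimes \ell}(k\sfw)=k\sfw\},
\]
and the whole argument rests on understanding how a single $\s\in\GL(V)$ intertwines the partial-derivative operator with the induced maps $\s^{\otimes p}$.

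First I would isolate the one genuinely computational ingredient, an equivariance lemma: for any subspace $U\subseteq V^{\otimes p}$ and any $\s\in\GL(V)$,
\[
\partial\big(\s^{\otimes p}U\big)\;=\;\s^{\otimes (p-1)}\big(\partial U\big).
\]
This is a one-line check: on a decomposable $u$ one has $(\psi\otimes\id^{\otimes p-1})(\s^{\otimes p}u)=\s^{\otimes p-1}\big((\s^*\psi\otimes\id^{\otimes p-1})(u)\big)$, where $\s^*\psi=\psi\circ\s$, and since $\s^*\in\GL(V^*)$ is a bijection, letting $\psi$ range over $V^*$ shows the two spans agree. Iterating gives $\partial^{\,j}(\s^{\otimes \ell}U)=\s^{\otimes(\ell-j)}(\partial^{\,j}U)$ for all $j$; in particular, with $U=k\sfw$ and $j=\ell-m$,
\[
\partial^{\,\ell-m}\big(\s^{\otimes \ell}(k\sfw)\big)\;=\;\s^{\otimes m}(R).
\]

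The reverse inclusion is then immediate: if $\s^{\otimes \ell}(k\sfw)=k\sfw$, the displayed identity gives $\s^{\otimes m}(R)=\partial^{\,\ell-m}(k\sfw)=R$, so $\s\in\Aut(S)$. The forward inclusion is where the real content sits, and is the step I expect to be the main obstacle: knowing $\s^{\otimes m}(R)=R$ tells us only that $\partial^{\,\ell-m}(\s^{\otimes\ell}(k\sfw))=R=\partial^{\,\ell-m}(k\sfw)$, and the derivative operator is far from injective on lines, so equal derivatives do not by themselves force $\s^{\otimes\ell}(k\sfw)=k\sfw$ (a simple non-superpotential $\sfw$ shows this can genuinely fail). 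The remedy is to recover $k\sfw$ from $R$ directly rather than from its derivatives: by Proposition~\ref{prop.w},
\[
k\sfw\;=\;\bigcap_{s+t=\ell-m} V^{\otimes s}\otimes R \otimes V^{\otimes t}.
\]
Since $\s^{\otimes m}(R)=R$ and $\s\in\GL(V)$, we have $\s^{\otimes\ell}\big(V^{\otimes s}\otimes R \otimes V^{\otimes t}\big)=V^{\otimes s}\otimes R \otimes V^{\otimes t}$ for every $s,t$ with $s+t=\ell-m$, so $\s^{\otimes \ell}$ preserves this intersection and hence fixes the line $k\sfw$. This yields $\s^{\otimes\ell}(k\sfw)=k\sfw$ and completes both inclusions. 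The only nonformal input is Proposition~\ref{prop.w}; the equivariance lemma, though elementary, is exactly what lets one pass freely between $R$ and $\sfw$ and is used in both directions.
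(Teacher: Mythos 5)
Your proof is correct and takes essentially the same route as the paper: your equivariance identity $\partial\big(\s^{\otimes p}U\big)=\s^{\otimes(p-1)}\big(\partial U\big)$ is exactly the computation in the paper's Lemma 3.1, which gives the inclusion $\{\s \mid \s^{\otimes \ell}(k\sfw)=k\sfw\}\subseteq \Aut(S)$, while your forward direction — recovering $k\sfw$ as the one-dimensional intersection $\bigcap_{s+t=\ell-m} V^{\otimes s}\otimes R\otimes V^{\otimes t}$, each term of which is preserved by $\s^{\otimes \ell}$ — is precisely the paper's argument via $W_\ell=k\sfw$. Your explicit observation that equality of the derivative spaces alone cannot force invariance of the line (since $\partial^{\,\ell-m}$ is not injective) is a useful clarification of why the intersection description is needed, but the substance coincides with the published proof.
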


\section{Preliminaries}

\subsection{Calabi-Yau algebras}
\label{ssect.CY}
Let  $A$ be a graded $k$-algebra, $A^\circ$ its opposite algebra, and $A^e=A \otimes A^\circ$.
We consider $A$ as a left $A^e$-module via $(a \otimes b)\cdot c=acb$. If $\nu$ is an automorphism of $A$ we denote by ${{}_{{}_\nu}} A_{{}_1}$ the left $A^e$-module that is $A$ as a graded vector space with action $(a \otimes b)\cdot c=\nu(a)cb$. 
 We say $A$ is  {\sf twisted Calabi-Yau} of {\sf dimension} $d$ if it has a finite-length resolution as a left $A^e$-module by finitely 
 generated projective $A^e$-modules and there is an isomorphism
\begin{equation}
\label{CY-cond}
\Ext^i_{A^e}(A,A^e) \cong \begin{cases} 
					{{}_{{}_\nu}} A_{{}_1}(\ell) & \text{if $i=d$}
					\\
					0 & \text{if $i \ne d$}
				\end{cases}
\end{equation}
of graded right $A^e$-modules for some integer $\ell$ and some graded $k$-algebra automorphism $\nu$. We call $\nu$ the {\sf Nakayama automorphism} of $A$. (Some authors call $\nu^{-1}$ the Nakayama automorphism; see, e.g., \cite{RRZ}.) We say $A$ is  {\sf Calabi-Yau} if 
it is twisted Calabi-Yau and $\nu=\id_A$. If $A$ is  twisted Calabi-Yau of dimension $d$, then it has global dimension $d$. 

When $A$ is Calabi-Yau of dimension $d$ we often say it is $d$-Calabi-Yau or $d$-CY.

\begin{prop}
\cite[Prop.3.1(1)]{SZ}
If $A$ is Artin-Schelter regular, then ${}_Ak$ has a finite free resolution, i.e., a finite projective
resolution in which every term is a finitely generated free left $A$-module. In particular, $A$ is finitely presented. 
\end{prop}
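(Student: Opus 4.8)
The plan is to construct the minimal graded free resolution of the trivial module ${}_Ak = A/A_{\geq 1}$ and to show that it is finite and has finitely generated terms. Since $A$ is connected graded and $k$ is concentrated in degree $0$, such a minimal resolution $\cdots \to P_1 \to P_0 \to k \to 0$ exists, is unique up to isomorphism, and is characterized by the property that every differential has matrix entries in $A_{\geq 1}$. Because $k$ is cyclic, $P_0 = A$. The hypothesis $\gldim A = d<\infty$ gives $\Ext^{d+1}_A(k,-)=0$, hence $\pdim {}_Ak \leq d$ and $P_i = 0$ for $i>d$; so the resolution has the form $0 \to P_d \to \cdots \to P_0 \to k \to 0$. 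The entire content of the proposition is therefore the finite generation of each $P_i$: this is not automatic, since $A$ need not be noetherian and a syzygy (submodule) of a finitely generated module need not be finitely generated.

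The key step, where the Gorenstein condition is used, is to prove that the top term $P_d$ is free of rank one. Write $P_d = \bigoplus_{\alpha \in I} A(-j_\alpha)$ and apply $\Hom_A(-,A)$ to the deleted resolution, obtaining a cochain complex $Q^\bullet$ of graded right modules with $Q^i = \Hom_A(P_i,A)$ and $H^i(Q^\bullet) = \Ext^i_A(k,A)$. Minimality of the original differentials forces the image of $\delta^{d-1}\colon Q^{d-1} \to Q^d = \prod_\alpha A(j_\alpha)$ to lie coordinatewise in $A_{\geq 1}$. Since $Q^{d+1}=0$, the Gorenstein condition yields
$$
k(\ell) \;=\; \Ext^d_A(k,A) \;=\; Q^d/\im\delta^{d-1} \;\twoheadrightarrow\; Q^d\big/\textstyle\prod_\alpha A_{\geq 1}(j_\alpha) \;=\; \prod_\alpha k(j_\alpha).
$$
The left-hand side is one-dimensional and concentrated in degree $-\ell$, and it is nonzero, so $P_d \neq 0$ and the index set $I$ must be a singleton with $j_{\alpha_0}=\ell$; that is, $P_d = A(-\ell)$, which is finitely generated. (In particular the top syzygy $\Omega^d \cong A(-\ell)$ is free.)

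It remains to show that the intermediate terms $P_1,\dots,P_{d-1}$ are finitely generated, and this is the main obstacle. The computation above shows that the dualized complex, reversed and augmented, is an exact sequence $0 \to Q^0 \to \cdots \to Q^d \to k(\ell)\to 0$ of right modules, with minimal differentials and with finitely generated free top term $Q^d = A(\ell)$ mapping onto $k(\ell)$; this is the beginning of the minimal free resolution of the right module $k_A$. My plan is to propagate finiteness through this finite-length complex by running the same one-dimensionality argument on the right (equivalently, transporting the analysis to $A^\circ$, which inherits the Gorenstein condition from the exactness just established) and then descending, so that each $Q^i$, and hence by reflexivity of finitely generated free modules each $P_i$, is finitely generated free. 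This non-noetherian finiteness propagation is exactly the delicate point carried out in \cite[Prop.\ 3.1(1)]{SZ}, and I expect it, rather than the top-term computation, to be where the real work lies. Once all $P_i$ are finitely generated we obtain a finite free resolution of $k$; finite generation of $P_1$ then says that $A$ is finitely generated as an algebra, and finite generation of $P_2$ bounds the relations, so $A$ is finitely presented.
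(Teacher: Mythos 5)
Your top-term computation is correct and is the natural first step: minimality puts $\im\delta^{d-1}$ coordinatewise in $A_{\ge 1}$, and the one-dimensionality of $\Ext^d_A(k,A)\cong k(\ell)$ then forces $P_d\cong A(-\ell)$. (For the record, the paper itself offers no proof of this proposition --- it is quoted from \cite[Prop.~3.1(1)]{SZ} --- so the comparison is really against that source.) The genuine gap is that your proposal stops exactly where the content of the theorem begins, and the propagation mechanism you sketch is circular. The terms $Q^i=\uHom_A(P_i,A)$ are graded \emph{products} of shifts of $A$, not free modules, as long as the $P_i$ are not yet known to be finitely generated; hence the exact sequence $0\to Q^0\to\cdots\to Q^d\to k(\ell)\to 0$ is not a projective resolution of $k_A$ and cannot be used to compute $\Ext^*_{A^\circ}(k,A)$. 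So $A^\circ$ does not ``inherit the Gorenstein condition from the exactness just established''; that inheritance is only available after finite generation is proved, which is what you are trying to prove. The appeal to ``reflexivity'' has the same defect: the canonical map $P_i\to\uHom_{A^\circ}(\uHom_A(P_i,A),A)$ is an isomorphism for finitely generated free modules but not for infinitely generated ones, so finite generation of $Q^i$ would not descend to $P_i$ by biduality. Your closing sentence defers the remaining work back to \cite{SZ}, so as it stands the proposal is a correct opening move plus a citation, not a proof.

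The missing step can be done with an elementary coordinate argument, by downward induction on $i$. Suppose $P_{i+1}$ is finitely generated ($1\le i\le d-1$; the base case $P_d\cong A(-\ell)$ is your computation, and $P_0=A$ needs nothing). Write $P_i=\bigoplus_\beta Ae_\beta$. Since $P_{i+1}$ is finitely generated and each element of a direct sum has finitely many nonzero coordinates, $\im(d_{i+1})\subseteq\bigoplus_{\beta\in F}Ae_\beta$ for some finite set $F$. If some $\beta\notin F$, the coordinate functional $e_\beta^\ast\in Q^i$ satisfies $\delta^i(e_\beta^\ast)=e_\beta^\ast\circ d_{i+1}=0$, so by exactness of the dual complex at spot $i$ (here one uses $\Ext^i_A(k,A)=0$ for $i<d$) we get $e_\beta^\ast\in\im\delta^{i-1}$; but minimality of $d_i$ forces every element of $\im\delta^{i-1}$ to take all its values in $A_{\ge 1}$, contradicting $e_\beta^\ast(e_\beta)=1$. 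Hence $F$ exhausts the basis and $P_i$ is finitely generated free. This works directly with the $P_i$ (no biduality, no right-sided Gorenstein condition needed), and then finite generation of $P_1$ and $P_2$ gives finite presentation of $A$, as you say at the end.
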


\begin{cor}
If $A$ is Artin-Schelter regular, then there is a finite dimensional graded vector space $V$ and a finite dimensional graded subspace $R$
of the tensor algebra $TV$ such that  $A=TV/(R)$.
\end{cor}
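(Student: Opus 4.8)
The plan is to read off the statement from the Proposition just proved: it asserts that an Artin--Schelter regular algebra $A$ is finitely presented, and for a connected $\NN$-graded algebra this is precisely the assertion that $A\cong TV/(R)$ with $V$ and $R$ finite-dimensional and graded. What remains is to manufacture $V$ and $R$ explicitly from the finite free resolution of ${}_Ak$ whose existence the Proposition also guarantees.

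First I would fix a minimal graded free resolution $\cdots \to P_1 \to P_0 \to k \to 0$ of ${}_Ak$; since $A$ is connected this resolution is unique and each term has the form $P_i = A \otimes_k U_i$ for a graded vector space $U_i \cong \Tor^A_i(k,k)$, with $U_0 = k$. The hypothesis that the resolution is \emph{finite} --- finitely many terms, each a finitely generated free module --- forces every $U_i$, and in particular $V := U_1 \cong \Tor^A_1(k,k)$, to be finite-dimensional. By minimality the images in $A$ of a homogeneous basis of $V$ constitute a minimal set of algebra generators of $A$ (graded Nakayama applied to $A_{\ge 1}/A_{\ge 1}^2$), so they determine a surjection of graded algebras $\phi : TV \twoheadrightarrow A$.

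Next I would analyze $I := \ker \phi$, a graded two-sided ideal of $TV$. The standard identification of minimal relations with $\Tor^A_2(k,k)$ shows that a minimal homogeneous generating set of $I$ as an ideal is indexed by a basis of $U_2 \cong \Tor^A_2(k,k)$; since $U_2$ is finite-dimensional, $I$ is generated by a finite-dimensional graded subspace $R \subseteq I \subseteq TV$. Then $A \cong TV/I = TV/(R)$ with both $V$ and $R$ finite-dimensional and graded, as required.

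The argument is essentially definition-chasing, so there is no serious obstacle; the one point that deserves care --- and for which I would either give a short argument or cite the standard theory of minimal resolutions over connected graded algebras --- is the identification of a minimal generating set of the relation ideal $I$ with $\Tor^A_2(k,k)$, which is what converts the finite-dimensionality of $U_2$ into the finite-dimensionality of $R$. Alternatively, one may bypass the resolution entirely and simply invoke the phrase ``finitely presented'' from the Proposition, unwinding its definition to the same conclusion.
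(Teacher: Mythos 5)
Your proposal is correct and follows exactly the route the paper intends: the paper gives no written proof, treating the corollary as an immediate consequence of the Proposition's assertion that $A$ is finitely presented, which is precisely what you unwind. Your extra detail identifying $V$ with $\Tor_1^A(k,k)$ and the relation space with $\Tor_2^A(k,k)$ via the minimal graded free resolution is the standard way to make that unwinding explicit and is sound.
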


\begin{lem}
\cite[Lem.1.2]{RRZ}
\label{lem.RRZ}
A connected graded algebra  is twisted Calabi-Yau  of dimension $d$ if and only if it is Artin-Schelter regular algebra of dimension $d$. 
In that case, the numbers $\ell$ in (\ref{CY-cond}) and (\ref{AS-Gor-cond}) are the same. 
\end{lem}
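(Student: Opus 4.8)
The plan is to prove both implications by comparing the bimodule $\Ext$ groups $\Ext^i_{A^e}(A,A^e)$ with the one-sided groups $\Ext^i_A(k,A)$, the bridge between them being the functor $-\otimes_A k$. The device that makes everything run is the following observation. Suppose $P_\bullet \to A$ is a resolution of the left $A^e$-module $A$ by finitely generated projective $A^e$-modules. Each $P_i$ is then projective, hence flat, as a one-sided $A$-module, and $A$ itself is one-sided free, so $P_\bullet \to A$ is an exact complex of flat right $A$-modules. Consequently $P_\bullet \otimes_A k \to k$ remains exact and is a resolution of ${}_A k$ by finitely generated projective \emph{left} $A$-modules; in particular $\gldim A \le d$ whenever $A$ is twisted Calabi-Yau of dimension $d$ (as already recorded in \S\ref{ssect.CY}). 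Moreover there is a natural isomorphism of complexes of graded vector spaces
\[
\Hom_{A^e}(P_\bullet, A^e)\otimes_A k \;\cong\; \Hom_A\big(P_\bullet\otimes_A k,\, A\big),
\]
checked first on $P=A^e$ and then extended to summands of finite free modules by additivity.

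For the direction ``twisted Calabi-Yau $\Rightarrow$ AS-regular'' I would argue as follows. The complex $\Hom_{A^e}(P_\bullet, A^e)$ consists of finitely generated projective right $A^e$-modules, hence of flat right $A$-modules, and by hypothesis its cohomology is concentrated in degree $d$, where it equals ${}_{\nu}A_{1}(\ell)$, a module that is free of rank one as a right $A$-module. Since the terms and the single nonzero cohomology module are all flat, the universal-coefficient (hyper-$\Tor$) spectral sequence degenerates and $-\otimes_A k$ commutes with passage to cohomology, giving $\Ext^i_A(k,A)\cong \Ext^i_{A^e}(A,A^e)\otimes_A k$. The right-hand side vanishes for $i\ne d$ and, for $i=d$, equals ${}_{\nu}A_{1}(\ell)\otimes_A k \cong k(\ell)$, since $A\otimes_A k = k$ and the degree-preserving $\nu$ acts trivially on $k=A_0$. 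This is exactly the Gorenstein condition (\ref{AS-Gor-cond}) with the same integer $\ell$, and together with $\gldim A = d$ it shows $A$ is AS-regular of dimension $d$.

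For the converse ``AS-regular $\Rightarrow$ twisted Calabi-Yau'' the first task is to produce a finite resolution of $A$ by finitely generated projective $A^e$-modules, i.e.\ to show $A$ is homologically smooth. Starting from the finite finitely-generated free resolution of ${}_A k$ provided by \cite[Prop.3.1(1)]{SZ}, and using that $A$ is finitely presented with $\dim_k A_i<\infty$, I would build the minimal graded bimodule resolution $\tilde P_\bullet \to A$ with terms $\tilde P_i = A\otimes_k \Tor^A_i(k,k)\otimes_k A$; these are finitely generated because each $\Tor^A_i(k,k)$ is finite-dimensional, and the resolution has length $d=\gldim A$ because $\tilde P_\bullet \otimes_A k$ recovers the minimal one-sided resolution of $k$. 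Granting this, one computes $\Ext^i_{A^e}(A,A^e)$ from $\Hom_{A^e}(\tilde P_\bullet, A^e)$; the comparison isomorphism above together with (\ref{AS-Gor-cond}) forces the cohomology to vanish except in degree $d$ and identifies $M:=\Ext^d_{A^e}(A,A^e)$ as a graded $A$-bimodule with $M\otimes_A k \cong k(\ell)\cong k\otimes_A M$. A graded bimodule that is cyclic and free of rank one on each side is necessarily of the form ${}_{\nu}A_{1}(\ell)$ for a unique graded algebra automorphism $\nu$ (defined by $a\cdot g = g\cdot\nu(a)$ on a right-module generator $g$), which we take to be the Nakayama automorphism; this yields (\ref{CY-cond}) with the same $\ell$.

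The hard part will be the converse direction. Establishing homological smoothness in the non-noetherian generality of the paper—that is, constructing the finite finitely-generated projective bimodule resolution $\tilde P_\bullet \to A$ and proving its exactness—is where most of the work lies, since the one-sided finite free resolution must be lifted to a bimodule resolution and its length controlled. The second genuine difficulty is pinning down the \emph{bimodule} structure of $M=\Ext^d_{A^e}(A,A^e)$: one must show $M$ is invertible, i.e.\ free of rank one on \emph{each} side, not merely of the correct total dimension, before it can be written as a twist ${}_{\nu}A_{1}(\ell)$. By contrast, the flatness and one-sided-freeness bookkeeping that lets $-\otimes_A k$ pass through cohomology is routine once a resolution of the right shape is in hand.
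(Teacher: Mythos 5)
The paper offers no proof of this lemma at all---it is imported wholesale from \cite[Lem.~1.2]{RRZ}---so your attempt can only be judged on its own merits. Your forward direction (twisted Calabi--Yau $\Rightarrow$ AS-regular) is essentially complete and is the standard argument: the finite length of the resolution plus projectivity of its terms makes the syzygies flat, so $-\otimes_A k$ preserves exactness, and the universal-coefficient spectral sequence collapses because the lone cohomology module ${}_{\nu}A_{1}(\ell)$ is free as a right $A$-module. The converse, however, contains three genuine gaps, two of which you flag as ``the hard part'' without closing them, so as written it is a plan rather than a proof.

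First, your justification of the exactness of $\tilde P_\bullet$ is circular: saying the length is controlled ``because $\tilde P_\bullet\otimes_A k$ recovers the minimal one-sided resolution'' presupposes that $\tilde P_\bullet$ is a resolution. The repair is to note that $A^e=A\otimes A^\circ$ is itself a connected graded, locally finite algebra (local finiteness of $A$ follows from \cite[Prop.~3.1(1)]{SZ}) and $A$ is a cyclic graded $A^e$-module, so a minimal graded free $A^e$-resolution of $A$ exists; its multiplicity spaces are $\Tor^{A^e}_i(A,k)\cong\Tor^A_i(k,k)$ (computed from the bar resolution), which are finite-dimensional and vanish for $i>d$, giving homological smoothness. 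Second, the comparison isomorphism controls $H^*(C\otimes_A k)$ for $C=\Hom_{A^e}(\tilde P_\bullet,A^e)$, not $H^*(C)$, and vanishing after $-\otimes_A k$ does not by itself ``force'' vanishing before: you need the spectral sequence $\Tor^A_p\big(H^q(C),k\big)\Rightarrow H^{q-p}(C\otimes_A k)$ combined with graded Nakayama---legitimate here because each $H^q(C)$ is bounded below, the terms of $C$ being finitely generated free---to conclude $H^q(C)=0$ for $q\neq d$, that $\Tor^A_{\geq 1}\big(H^d(C),k\big)=0$, and hence that $M=H^d(C)$ is free of rank one as a right $A$-module. Third, the invertibility issue you flag is real and has a specific source: freeness of $M$ on the \emph{left} requires the right-module Gorenstein condition $\Ext^i_{A^\circ}(k,A)\cong \delta_{id}\,k(\ell)$, which the one-sided definition (\ref{AS-Gor-cond}) does not posit; it must first be derived by dualizing the minimal free resolution of ${}_Ak$, i.e.\ the observation the paper itself records at the start of the subsection defining $\sfw$, that $\Hom_A(P_\bullet,A)$ is then a deleted projective resolution of $k_A$. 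With both one-sided conditions in hand, $M\otimes_A k\cong k(\ell)\cong k\otimes_A M$ plus the Tor-vanishing gives $M$ free of rank one on each side, and expressing the left action through a right-module generator produces the automorphism $\nu$ and the isomorphism $M\cong{}_{\nu}A_{1}(\ell)$ with the same $\ell$. Your skeleton is the right one---indeed it is the skeleton of the proof in \cite{RRZ}---but these three steps are where the lemma actually lives.
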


\subsection{The homological determinant}
In \cite{JZ}, Jorgensen and Zhang introduced the homological determinant in order to prove for AS-Gorenstein rings, which are defined in
Theorem \ref{thm.JZ}, non-commutative analogues of several important results about commutative connected graded Gorenstein algebras.  
Since then the homological determinant has played an important role in the invariant theory of non-commutative rings. 

The homological determinant of an AS-Gorenstein algebra, $A$, is a group homomorphism $\hdet:\Aut(A) \to k^\times$.
If $SV$ is a symmetric algebra, i.e., a polynomial ring, on a finite dimensional vector space $V$, then $\Aut(SV)=\GL(V)$ and the homological determinant is the same as usual determinant.

It is often difficult to compute the homological determinant.
  
The {\sf homological determinant} is the function $\hdet$ in the next theorem.

\begin{thm}
\cite[Lem. 2.2, Prop. 2.5]{JZ}
\label{thm.JZ}
Let $A$ be a connected graded $k$-algebra. 
Suppose there is an integer $d$ such that left and right injective dimensions of $A$ as a graded $A$-module equal $d$ and that  
$$
 \Ext_A^i(k,A) = \Ext_{A^\circ}^i(k,A) \cong \d_{id} \, k(\ell).
$$ 
for some integer $\ell$. (Such an $A$ is said to be AS-Gorenstein.)
\begin{enumerate}
  \item 
  Each $\s \in \Aut(A)$ induces an isomorphism $H^d_\fm(\s):H^d_\fm(A) \to H^d_\fm(A)$ of graded vector spaces where $H^d_\fm(A)$ is the 
  $d^{\th}$ local cohomology group (see \cite{JZ} for the definition.)
  \item 
 There is an isomorphism $\psi:H^d_\fm(A)  \to A'(\ell)$ of graded vector spaces where $A'$ is the Matlis dual, 
 $A'=\oplus_{i=0}^\infty \Hom_k(A_i,k)$.
 \item 
There is a group homomorphism
$$
\hdet: \Aut(A) \to k^\times, \qquad \s \mapsto \hdet(\s),
$$
such that the diagram
$$
\xymatrix{
H^d_\fm(A) \ar[d]_\psi \ar[rr]^{H^d_\fm(\s)} && H^d_\fm(A)  \ar[d]^\psi 
\\
A'(\ell) \ar[rr]_{\hdet(\s)\,(\s^{-1})'} && A'(\ell)
}
$$
commutes. 
\item{}
The maps $H^d_\fm(\s)$ and $(\s^{-1})'$ are $\s$-linear: a degree-preserving linear map $f:M \to N$ between two graded left $A$-modules is 
$\s$-linear if $f(am)=\s(a)f(m)$ for all $a \in A$ and $m \in M$.
\end{enumerate}
\end{thm}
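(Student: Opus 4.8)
The plan is to build $\hdet$ from graded local cohomology together with local (Matlis) duality, as in \cite{JZ}. Throughout write $\fm=A_{\ge 1}$ and let $H^i_\fm(-)=\limproj_n\Ext^i_A(A/A_{\ge n},-)$ denote graded local cohomology, a functor on graded $A$-modules. I would prove the four assertions in the order (2), then (1) and (4) together, then (3).

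First I would establish (2), since this is where the AS-Gorenstein hypothesis does its work: it is exactly the input that identifies the top local cohomology of $A$ with a shifted Matlis dual. Concretely, the hypothesis $\Ext^i_A(k,A)\cong\d_{id}\,k(\ell)$ (together with its right-module counterpart) says that $A$ admits a balanced dualizing complex which, as a complex of graded vector spaces, is $A(-\ell)[d]$; the $A$-module structure is twisted by the eventual Nakayama automorphism, but this does not change the underlying graded vector space. Van den Bergh's local duality, $\R\Gamma_\fm(M)'\cong\RHom_A(M,\omega_A)$, applied to $M=A$ then yields $H^i_\fm(A)=0$ for $i\ne d$ and a graded-vector-space isomorphism $\psi:H^d_\fm(A)\xrightarrow{\ \sim\ }A'(\ell)$, once the shift is checked to be $+\ell$ (the polynomial ring $k[x_1,\dots,x_d]$, where $\ell=d$ and $H^d_\fm$ is the module of inverse polynomials with socle in degree $-d$, pins down the sign). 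The hard part will be exactly this step: one must develop enough local-duality machinery, or compute $H^*_\fm(A)$ directly from a minimal resolution of $k$, to obtain both the vanishing and the precise degree shift, and to do so without assuming $A$ is noetherian.

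Parts (1) and (4) are comparatively formal. A graded algebra automorphism $\s$ fixes $\fm$, hence acts functorially on the entire local-cohomology construction, producing $H^d_\fm(\s):H^d_\fm(A)\to H^d_\fm(A)$; since $\s$ is invertible this is an isomorphism, and since $\s$ is a \emph{ring} automorphism, with $\s(am)=\s(a)\s(m)$, the induced map is $\s$-linear in the sense of (4). On the other side, $f\mapsto f\circ\s^{-1}$ defines the $\s$-linear automorphism $(\s^{-1})'$ of $A'(\ell)$, the $\s$-linearity being a one-line computation on the Matlis dual.

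Finally, for (3) I would transport $H^d_\fm(\s)$ across $\psi$ to the degree-$0$ $\s$-linear automorphism $\phi_\s:=\psi\circ H^d_\fm(\s)\circ\psi^{-1}$ of $A'(\ell)$. Both $\phi_\s$ and $(\s^{-1})'$ are degree-preserving and $\s$-linear, so the composite $\big((\s^{-1})'\big)^{-1}\circ\phi_\s$ is a genuine degree-$0$ $A$-linear endomorphism of $A'(\ell)$, the two $\s$-twists cancelling. Because $A$ is connected, $\End^0_A(A)=k$, and dually $\End^0_A\big(A'(\ell)\big)=k$, so this composite is a scalar, which I define to be $\hdet(\s)\in k^\times$; unwinding gives precisely the commuting square $\psi\circ H^d_\fm(\s)=\hdet(\s)\,(\s^{-1})'\circ\psi$. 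That $\hdet$ is a group homomorphism then follows from functoriality, $H^d_\fm(\s\t)=H^d_\fm(\s)H^d_\fm(\t)$, together with $((\s\t)^{-1})'=(\s^{-1})'(\t^{-1})'$ and bookkeeping of the $\s$-twists, the scalars multiplying because $\End^0_A(A'(\ell))=k$.
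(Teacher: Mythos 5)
The paper offers no proof of this statement: it is quoted directly from Jorgensen--Zhang \cite[Lem.~2.2, Prop.~2.5]{JZ}, so your proposal can only be measured against their argument, whose overall shape --- top local cohomology, Matlis duality, and a scalar extracted from a one-dimensional space of twisted endomorphisms --- you have reconstructed correctly in outline. Two small slips first: graded local cohomology is the \emph{direct} limit $H^i_\fm(-)=\liminj_n \Ext^i_A(A/A_{\ge n},-)$, not the inverse limit you wrote; and your $\End^0=k$ computations for $A'$ require $A$ to be locally finite (a standing hypothesis in \cite{JZ}, and automatic for the algebras $S$ of this paper), since they pass through Matlis duality.

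The genuine gaps are in (2) and (3). For (2) you route through balanced dualizing complexes and Van den Bergh's local duality, but the theorem carries no noetherian hypothesis, and Van den Bergh's existence theorem needs noetherianity plus the $\chi$-condition; asserting that the AS-Gorenstein hypothesis ``says that $A$ admits a balanced dualizing complex'' is assuming what must be proved, and you concede the hard part without supplying it. The elementary route, in the spirit of \cite{JZ}, avoids this: each $A/A_{\ge n}$ has a finite filtration whose subquotients are direct sums of shifts of $k$, so the hypotheses $\injdim A=d$ and $\Ext^i_A(k,A)\cong \d_{id}\,k(\ell)$ give, by induction on $n$ and the long exact sequences, $\Ext^i_A(A/A_{\ge n},A)\cong \d_{id}\,(A/A_{\ge n})'(\ell)$, whose colimit is $A'(\ell)$. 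Crucially, this computation produces $\psi$ as an isomorphism of graded \emph{left} $A$-modules (the Nakayama twist sits on the right-hand structure). That extra linearity is exactly what your step (3) silently uses: if $\psi$ were only a graded vector-space isomorphism, as you state, then $\phi_\s=\psi\circ H^d_\fm(\s)\circ\psi^{-1}$ would be merely a degree-preserving linear map --- conjugation by an arbitrary linear isomorphism does not preserve $\s$-linearity --- so the composite $\big((\s^{-1})'\big)^{-1}\circ\phi_\s$ need not be $A$-linear, and its identification with a scalar in $\End^0_A\big(A'(\ell)\big)=k$ collapses. Once $\psi$ is taken left-$A$-linear, your argument for (3) and for the multiplicativity of $\hdet$ goes through, and your treatment of (1) and (4) is fine.
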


\subsection{}

Let  $\phi:V^{\otimes \ell}\to V^{\otimes \ell}$ be the linear map
\begin{equation}
\label{defn.phi}
\phi(v_1\otimes v_2\otimes \cdots \otimes v_{\ell-1}\otimes v_{\ell})  \; := \; v_{\ell}\otimes v_1\otimes \cdots \otimes v_{\ell-2}\otimes v_{\ell-1}.
\end{equation}

\begin{definition} 
\label{de.tw.spp}
Let $\sfw\in V^{\otimes \ell}$ and $\s \in \GL(V)$. We call $\sfw$ a
\begin{enumerate}
  \item 
  {\sf superpotential} if $\phi (\sfw)=\sfw$; 
  \item 
  {\sf $\s$-twisted superpotential} if $(\s\otimes \id^{\otimes \ell-1})\phi (\sfw)=\sfw$; 
  \item 
  a {\sf twisted superpotential} if it is a  $\s$-twisted superpotential for some $\s$. 
\end{enumerate}  
\end{definition}

\begin{remark} 
Our terminology does not agree with that in \cite{BSW}: there, $\sfw \in V^{\otimes \ell}$ is called a twisted superpotential if it is invariant under the 
map $v_1\otimes \ldots \otimes v_\ell \mapsto (-1)^{\ell+1}\s(v_\ell)\otimes v_1\otimes  \ldots \otimes v_{\ell-1}$ for some $\s \in \GL(V)$.
\end{remark}

\subsection{}
\label{VV*.convention}
For a linear transformation $\theta:U\to V$, we define $\theta^*:V^*\to U^*$ by $\theta^*(\xi)=\xi\circ \theta$.  
In order to identify $(V^*)^{\otimes i}$ with $(V^{\otimes i})^*$,  we adopt the convention that 
 $$
 (\xi_1\otimes \ldots \otimes \xi_i)(v_1\otimes \ldots \otimes v_i) \; := \; \xi _1(v_1)\ldots \xi_i(v_i).
 $$

\subsection{The  $m$-Koszul complex}
\label{sse.m.Koszul}

The {\sf $m$-homogeneous dual} of an $m$-homogeneous algebra $A=TV/(R)$   is the algebra
$$
A^! \; := \; \frac{TV^*}{(R^\perp)}
$$
where $R^\perp \subseteq \big(V^{\otimes m}\big)^*=(V^*)^{\otimes m}$ consists of the functions vanishing on $R$.

Since $\Aut( A)  = \{ \s \in \GL(V) \; | \; \s^{\otimes m}(R)=R\}$, the anti-isomorphism  $\GL(V) \to \GL(V^*)$, 
$\s \mapsto \s^*$, restricts to an anti-isomorphism  
$$
 \Aut( A) \to  \Aut( A^!)
 $$
that we denote by $\s \mapsto \s^!$.

\begin{lem}
\label{lem.Wi}
Let $A=TV/(R)$ be an $m$-homogeneous algebra. 
For all $i \ge 0$, define 
\begin{equation}
\label{defn.Wi}
W_i\; := \;
\begin{cases} 
V^{\otimes i} & \textnormal { if } 0\leq i \leq m-1,
 \\
\cap _{s+m+t=i}V^{\otimes s}\otimes R\otimes V^{\otimes t} \; \subseteq  \; V^{\otimes i} &\textnormal { if } i \ge m.
\end{cases}
\end{equation} 
The quotient map $\Phi:TV^* \to A^!$ induces isomorphisms
$A^!_i \; \cong \; W_i^*$ for all $i \ge 0$.
\end{lem}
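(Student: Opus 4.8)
The plan is to compute the degree-$i$ piece of each side explicitly and match them through the standard annihilator duality for subspaces of a finite-dimensional vector space. Throughout I would use that $V$, and hence each $V^{\otimes i}$, is finite-dimensional — this is already implicit in writing $\bigl(V^{\otimes m}\bigr)^*=(V^*)^{\otimes m}$ — so that $Z\mapsto Z^\perp$ is an inclusion-reversing involution on subspaces with $\dim Z+\dim Z^\perp=\dim U$, and so that restriction of functionals gives a natural isomorphism $U^*/Z^\perp\cong Z^*$ for any subspace $Z\subseteq U$.

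First I would describe $A^!_i$ concretely. Since $A^!=TV^*/(R^\perp)$ and the ideal $(R^\perp)$ is homogeneous, $\Phi$ restricts in degree $i$ to a surjection $\Phi_i:(V^*)^{\otimes i}\to A^!_i$ whose kernel is the degree-$i$ component of the ideal, namely
\[
(R^\perp)_i \;=\; \sum_{s+m+t=i}(V^*)^{\otimes s}\otimes R^\perp\otimes (V^*)^{\otimes t}\;\subseteq\;(V^*)^{\otimes i}.
\]
When $0\le i\le m-1$ this sum is empty, so $\Phi_i$ is an isomorphism onto $(V^*)^{\otimes i}=(V^{\otimes i})^*=W_i^*$, and there is nothing further to prove in that range. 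For $i\ge m$ the whole statement reduces to the single identity $(R^\perp)_i=W_i^\perp$, since that would give $A^!_i=(V^*)^{\otimes i}/\ker\Phi_i=(V^*)^{\otimes i}/W_i^\perp\cong W_i^*$ by restriction.

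To establish $(R^\perp)_i=W_i^\perp$ I would invoke two elementary facts. (a) For each decomposition $s+m+t=i$, the annihilator of $V^{\otimes s}\otimes R\otimes V^{\otimes t}$ inside $(V^{\otimes i})^*$ is exactly $(V^*)^{\otimes s}\otimes R^\perp\otimes (V^*)^{\otimes t}$. Containment of the latter in the former is immediate from the pairing convention of \S\ref{VV*.convention}, and a dimension count using $\dim R^\perp=\dim V^{\otimes m}-\dim R$ shows the two spaces have complementary dimension in $(V^{\otimes i})^*$, forcing equality. (b) For finitely many subspaces $Z_j$ of a finite-dimensional space, $\bigl(\bigcap_j Z_j\bigr)^\perp=\sum_j Z_j^\perp$. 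Combining (a) and (b) with the definition (\ref{defn.Wi}) of $W_i$ as the intersection $\bigcap_{s+m+t=i}V^{\otimes s}\otimes R\otimes V^{\otimes t}$ then yields
\[
W_i^\perp=\sum_{s+m+t=i}\bigl(V^{\otimes s}\otimes R\otimes V^{\otimes t}\bigr)^\perp=\sum_{s+m+t=i}(V^*)^{\otimes s}\otimes R^\perp\otimes (V^*)^{\otimes t}=(R^\perp)_i,
\]
which is the required identity.

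I do not expect a serious obstacle here; the statement is essentially a bookkeeping exercise in linear duality. The one point that genuinely needs care is that fact (b) — and all the annihilator manipulations — hold only because everything in sight is finite-dimensional, so I would make the standing finite-dimensionality of $V$ explicit at the outset. The step I would write most carefully is the dimension count in (a), since that is where the pairing convention of \S\ref{VV*.convention} is used and where the complementary-dimension argument clinches equality from mere containment.
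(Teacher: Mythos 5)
Your proof is correct and follows essentially the same route as the paper's: both reduce the statement to the identity $\ker(\Phi_i)=W_i^{\perp}$ and verify it for $i\ge m$ via $\bigl(\bigcap_j Z_j\bigr)^{\perp}=\sum_j Z_j^{\perp}$ together with the identification $(V^{\otimes s}\otimes R\otimes V^{\otimes t})^{\perp}=(V^*)^{\otimes s}\otimes R^{\perp}\otimes (V^*)^{\otimes t}$, handling $0\le i\le m-1$ by the trivial vanishing of both sides. Your dimension count for point (a) and the explicit finite-dimensionality hypothesis merely fill in steps the paper's proof asserts without comment.
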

\begin{pf}
Let $\Phi_i$ denote the restriction of $\Phi$ to $(V^*)^{\otimes i}= ( V^{\otimes i})^*$. 
There is an exact sequence $0 \to W_i^{\perp}\to (V^*)^{\otimes i}\to W_i^*\to 0$ so
it suffices to show that $\ker(\Phi_i)=W_i^\perp$.

Suppose $0\leq i\leq m-1$. Then $A^!_i=(V^*)^{\otimes i}$ so $\ker(\Phi_i)=0$.
But $W_i=V^{\otimes i}$ so $W_i^{\perp}=0$ also. Hence  $\ker(\Phi_i)=W_i^{\perp}$ when $i\le m-1$.

Suppose $i\geq m$. Then
\begin{align*}
W_i^{\perp} & \; = \; \left(\bigcap _{s+m+t=i}V^{\otimes s}\otimes R\otimes V^{\otimes t}\right)^{\perp} \\
&  \; = \;  \sum _{s+m+t=i}(V^{\otimes s}\otimes R\otimes V^{\otimes t})^{\perp} \\
&  \; = \;  \sum _{s+m+t=i}(V^*)^{\otimes s}\otimes R^{\perp}\otimes (V^*)^{\otimes t} \\
&  \; = \;  \ker(\Phi_i).
\end{align*}
 The proof is complete. 
\end{pf}

The space $W_i$ is denoted  by $W^{(i)}$ in \cite{DV} and by $J_i$ in \cite{B1}.

Let $\{x_i\}$ be a basis for $V$ and $\{\xi_i\}$ the dual basis for $V^*=A_1^!$. The element $e:=\sum_i x_i \otimes \xi_i$ belongs to
$A \otimes A^!$ and is independent of the choice of basis for $V$. Right multiplication by $e$ is a left $A$-module homomorphism
$$
\xymatrix{
A \otimes (A_n^!)^* \ar[r]^>>>>>>{\cdot e} & A \otimes (A_{n-1}^!)^*. 
}
$$
When $A$ is an $m$-homogeneous  algebra $e^m=0$. 

\begin{thm}
[Berger] \textnormal{\cite [Thm. 2.4]{BM}}
\label{thm.berger}
Let $A$ be an $m$-homogeneous algebra and define $P_i=A \otimes (A^!_i)^*$ for $i \ge 0$. The complex 
$$
\xymatrix{
\cdots 
 \ar[r]^{\cdot e} &
 P_{2m} \ar[r]^{\cdot e^{m-1}}  & P_{m+1}\ar[r]^{\cdot e} &   P_m \ar[r]^{\cdot e^{m-1}} & P_1 \ar[r]^{\cdot e} & P_0 \ar[r] & k \ar[r] &0
}
$$
is exact  if and only if $A$ is $m$-Koszul.  
\end{thm}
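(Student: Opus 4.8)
The plan is to recognize the displayed sequence as a \emph{minimal} complex of free left $A$-modules and then match it, term by term, against the minimal free resolution of the trivial module ${}_Ak$, using Lemma \ref{lem.Wi} to bookkeep internal degrees. First I would record the structure of the complex. By Lemma \ref{lem.Wi} and finite-dimensionality, $P_i=A\otimes(A^!_i)^*\cong A\otimes W_i$ is free and generated in the single internal degree $i$, where $W_i\subseteq V^{\otimes i}$ is the space in (\ref{defn.Wi}). The relation $e^m=0$ makes the displayed sequence a complex, since consecutive differentials compose to right multiplication by $e^m$. Moreover the components of $e$ lie in $A_1$ and those of $e^{m-1}$ in $A_{m-1}$, so every differential maps $P_\bullet$ into $A_{\ge 1}P_\bullet$; the complex is minimal. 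Writing $\iota(j)$ for the subscript of the $j$-th term, so that $\iota(0),\iota(1),\iota(2),\dots=0,1,m,m+1,2m,\dots$, the $j$-th term is generated in internal degree $\iota(j)$. Finally I would note that exactness at $P_0$ and $P_1$ is automatic for any $m$-homogeneous algebra: the augmentation gives exactness at $P_0$, while the minimal relations of $TV/(R)$ are exactly $R$ in degree $m$, which is precisely the statement that the first syzygy of $k$ is generated in degree $m$ by $R$, i.e. exactness at $P_1$.

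For the direction \emph{exact $\Rightarrow$ $m$-Koszul}, if the complex is exact then, being minimal, it is the minimal free resolution of ${}_Ak$. Applying $k\otimes_A(-)$ annihilates all differentials because their entries lie in $A_{\ge 1}$, so $\Tor^A_j(k,k)\cong k\otimes_A P_{\iota(j)}\cong W_{\iota(j)}$ is concentrated in the single internal degree $\iota(j)$. Dualizing, each $\Ext^j_A(k,k)\cong A^!_{\iota(j)}$ is concentrated in a single degree, so $A$ is $m$-Koszul.

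For the converse, \emph{$m$-Koszul $\Rightarrow$ exact}, assume each $\Ext^j_A(k,k)$ is concentrated in a single internal degree $\delta_j$, and let $F_\bullet\to k$ be the minimal free resolution, so $F_j=A\otimes(\Ext^j_A(k,k))^*$ is generated in degree $\delta_j$. I would prove by induction on $j$ that $\delta_j=\iota(j)$, that $(\Ext^j_A(k,k))^*\cong W_{\iota(j)}$, and that under these identifications the $j$-th differential of $F_\bullet$ is right multiplication by $e$ or $e^{m-1}$ following the pattern above; this exhibits $F_\bullet\cong P_\bullet$ and hence the exactness of $P_\bullet$. The base cases $j\le 2$ are the automatic facts recorded above. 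For the inductive step, purity of $\Ext^{j+1}$ forces the next syzygy $\ker(F_j\to F_{j-1})$ to be generated in the single degree $\iota(j+1)$; minimality then identifies the generating space with the lowest-degree part of $\ker(\,\cdot e\,)$ or $\ker(\,\cdot e^{m-1}\,)$ emanating from $A\otimes W_{\iota(j)}$, and a direct computation of that kernel in the relevant degree returns exactly $W_{\iota(j+1)}$ (for instance $\ker(\,\cdot e^{m-1}\,)$ out of $A\otimes R$ has degree-$(m+1)$ part $(R\otimes V)\cap(V\otimes R)=W_{m+1}$), while the embedding of that part realizes the differential as $\cdot e$ or $\cdot e^{m-1}$.

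The genuine content, and the step I expect to be hardest, is the inductive claim in the converse that concentration of $\Ext$ in single degrees forces each syzygy to be \emph{generated} in the expected single degree with generating space exactly $W_{\iota(j+1)}$ — that is, that no higher-degree minimal generators appear. This is a distributivity phenomenon for the lattice of subspaces $V^{\otimes s}\otimes R\otimes V^{\otimes t}$, the $m$-homogeneous analogue of the Backelin--Fr\"oberg distributive-lattice criterion for ordinary Koszulity. I would establish it by combining the purity hypothesis with the Hilbert-series identity it imposes, namely $H_A(t)\bigl(\sum_j(-1)^{j}\dim W_{\iota(j)}\,t^{\iota(j)}\bigr)=1$, to force the dimension count that rules out extra generators and thereby makes the comparison map $P_\bullet\to F_\bullet$ a termwise isomorphism. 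Everything else is bookkeeping with Lemma \ref{lem.Wi} and minimality.
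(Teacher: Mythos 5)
The paper offers no proof of this statement: it is quoted verbatim from the literature (Berger; the citation given is \cite[Thm.~2.4]{BM}), so your attempt can only be measured against the standard proof there. Your first direction (exact $\Rightarrow$ $m$-Koszul) is correct and routine: the complex is minimal because $e$ and $e^{m-1}$ have entries in $A_{\geq 1}$, so if it is exact it is \emph{the} minimal free resolution of ${}_Ak$, and applying $k\otimes_A(-)$ gives $\Tor^A_j(k,k)\cong W_{\iota(j)}$ pure in degree $\iota(j)$. Your base cases (exactness at $P_0$, $P_1$, and the identification of the second syzygy space in degree $m+1$ with $(R\otimes V)\cap(V\otimes R)=W_{m+1}$) are also right.

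The gap is in the converse, at exactly the step you flag, and your proposed fix does not close it. Purity says the $(j+1)$-st syzygy $K=\ker(F_j\to F_{j-1})$ is generated in a single degree, which by minimality is its \emph{initial} degree; your induction needs that initial degree to be $\iota(j+1)$. This requires two things you do not establish: (a) $K$ vanishes in the degrees strictly between $\iota(j)$ and $\iota(j+1)$ (not automatic when the gap is $m-1>1$), and, more seriously, (b) if $K\neq 0$ then $K_{\iota(j+1)}=W_{\iota(j+1)}\neq 0$. Nothing in your argument rules out the scenario $K\neq 0$ but $W_{\iota(j+1)}=0$: every $\Ext^i_A(k,k)$ could then still sit in a single (wrong) degree while the displayed complex, whose next term is $A\otimes 0=0$, fails to be exact — ruling this out is the genuine content of the theorem, since the paper's definition of $m$-Koszul only demands concentration in \emph{some} single degree, not in degree $\rho(i)$. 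Your Hilbert-series device is circular here: purity yields $H_A(t)\sum_j(-1)^j\dim\Tor_j(k,k)\,t^{\delta_j}=1$ with the \emph{unknown} degrees $\delta_j$ and dimensions; writing it instead with $\dim W_{\iota(j)}$ and $t^{\iota(j)}$ presupposes precisely the conclusion $\delta_j=\iota(j)$ and $\Tor_j\cong W_{\iota(j)}$, and even the correct Euler identity cannot by itself exclude larger $\delta_j$. Berger's actual argument closes this by exhibiting the Koszul complex as a subcomplex of the (reduced) bar resolution, proving a priori that $\Tor_j(k,k)_n=0$ for $n<\rho(j)$ together with $\Tor_j(k,k)_{\rho(j)}\cong W_{\rho(j)}$, and running an induction in which exactness through stage $j$ is \emph{equivalent} to purity in degree exactly $\rho(j)$; the distributivity phenomenon you allude to (Backelin--Fr\"oberg, with Berger's extra condition in the $m\geq 3$ case) is indeed the right circle of ideas, but it is a substantive lemma that your proposal names without carrying out.
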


If $A$ is an $m$-homogeoneous algebra we call the complex in Theorem \ref{thm.berger} the {\sf $m$-Koszul complex} for $A$.

\subsection{The element $\sfw$ in  \S\ref{ssect.notn1}(7).}
The definition of an AS-regular algebra $A$ implies that if $P_\hdot$ is a deleted projective resolution of ${}_Ak$, then $\Hom_A(P_\hdot,A)$ is  a deleted projective resolution of $k_A$.  This implies the following well-known result.

\begin{prop}
With the notation in \S\ref{ssect.notn1}, let $P_i=S \otimes (S^!_i)^*$ for $i \ge 0$. The left-hand part of the 
minimal resolution of ${}_Sk$ looks like
$$
 \xymatrix{
 0 \ar[r] & P_{\ell} \ar[r]^{\cdot e} &
 P_{\ell-1} \ar[r]^{\cdot e^{m-1}}  & P_{\ell-m}\ar[r]^{\cdot e} & \cdots 
}
$$
where   $\dim_k(S^!_\ell)=1$, $\dim_k(S^!_{\ell-1})=\dim_k(V)$, and $\dim_k(S^!_{\ell-m})=\dim_k(R)$. 
\end{prop}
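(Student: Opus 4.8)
The plan is to read the resolution off Berger's theorem and then exploit the self-duality recorded just above the statement. Since $S$ is $m$-Koszul, Theorem \ref{thm.berger} says the $m$-Koszul complex, with $P_i = S \otimes (S^!_i)^*$, is a free resolution of ${}_Sk$; it is minimal because each differential is right multiplication by $e$ or $e^{m-1}$, both of positive internal degree, so the complex has zero differentials after applying $-\otimes_S k$. Because $S$ is AS-regular, $\gldim S = d < \infty$, so this resolution has length exactly $d$: writing $Q_j$ for its $j$-th term, $Q_j = 0$ for $j > d$ and $Q_d \ne 0$. The index pattern of the complex is $Q_{2p} = P_{pm}$ and $Q_{2p+1} = P_{pm+1}$, and the differential leaving $Q_j$ is $\cdot e$ when $j$ is odd and $\cdot e^{m-1}$ when $j$ is even and positive. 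All that remains is to locate the top term and compute the ranks of the top three terms.

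Next I would invoke the self-duality stated before the proposition: applying $\Hom_S(-,S)$ to the deleted resolution $Q_\bullet$ produces a minimal free resolution of $k_S$. Dualizing $Q_j = S \otimes (S^!_{i(j)})^*$ gives a free right module of the same rank $\dim_k S^!_{i(j)}$, and the AS-Gorenstein condition \eqref{AS-Gor-cond} forces, after reindexing $j \mapsto d-j$ and twisting by $\ell$, an isomorphism of minimal complexes between $\Hom_S(Q_{d-\bullet},S)(\ell)$ and the $m$-Koszul complex resolving $k_S$. Comparing the two complexes term by term yields the rank identity $\dim_k S^!_{i(d-j)} = \dim_k S^!_{i(j)}$ together with the degree identity $i(d-j) = \ell - i(j)$ for every $j$. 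In particular the top term is $Q_d = P_{i(d)}$ with its generator in the Gorenstein degree $\ell$, so $i(d) = \ell$ and, taking $j = 0$, $\dim_k S^!_\ell = \dim_k S^!_0 = 1$.

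From $i(d) = \ell$ the index pattern determines the last two maps: $Q_d = P_\ell \xrightarrow{\cdot e} Q_{d-1} = P_{\ell-1} \xrightarrow{\cdot e^{m-1}} Q_{d-2} = P_{\ell-m}$, mirroring the bottom maps $P_1 \xrightarrow{\cdot e} P_0$ and $P_m \xrightarrow{\cdot e^{m-1}} P_1$; equivalently $i(d-1) = \ell - 1$ and $i(d-2) = \ell - m$. Finally the rank identity at $j = 1$ and $j = 2$, combined with $S^!_1 = V^*$ and the isomorphism $S^!_m \cong R^*$ coming from Lemma \ref{lem.Wi} (since $W_m = R$), gives $\dim_k S^!_{\ell-1} = \dim_k S^!_1 = \dim_k V$ and $\dim_k S^!_{\ell-m} = \dim_k S^!_m = \dim_k R$, which are the three asserted dimensions.

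The main obstacle I expect is the bookkeeping in the second paragraph: checking that $\Hom_S(-,S)$ carries the minimal resolution of ${}_Sk$ to that of $k_S$ with precisely the homological reversal $j \mapsto d-j$ and the degree shift by the Gorenstein parameter $\ell$, so that the term-by-term comparison of the two $m$-Koszul complexes is legitimate; this is exactly where \eqref{AS-Gor-cond}, equivalently Lemma \ref{lem.RRZ}, enters, and once it is in place the remaining work is the routine combinatorics of the indices $i(j)$. A subsidiary point worth isolating is that $i(d) = \ell$ together with the pattern forces $\ell \equiv 1 \pmod m$, i.e. the top term occupies an odd homological slot; this is precisely what the palindromic symmetry imposes (the dual of $P_1 \xrightarrow{\cdot e} P_0$ is $P_\ell \xrightarrow{\cdot e} P_{\ell-1}$, which is a valid differential only from an odd slot unless $m=2$), and it is what makes the displayed terminal maps $\cdot e$ and $\cdot e^{m-1}$, hence the terms $P_{\ell-1}$ and $P_{\ell-m}$, well defined.
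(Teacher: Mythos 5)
Your proof is correct and takes essentially the paper's intended route: the remark immediately preceding the proposition says precisely that, by AS-regularity, $\Hom_S(-,S)$ applied to the (minimal, since $e$ has positive degree) Koszul resolution of ${}_Sk$ from Theorem \ref{thm.berger} gives the minimal resolution of $k_S$, and the proposition follows by the rank and generator-degree comparison you carry out. The only slip is cosmetic: with the paper's convention $\Ext^d_S(k,S)\cong k(\ell)$, the complex resolving $k_S$ is $\Hom_S(Q_{d-\bullet},S)(-\ell)$ rather than $(\ell)$, which does not affect your identities $i(d-j)=\ell-i(j)$ and $\dim_k S^!_{i(d-j)}=\dim_k S^!_{i(j)}$.
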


\subsubsection{Remark}
With the notation in \S\ref{ssect.notn1}, the fact that the arrow $P_{\ell} \to P_{\ell-1}$ is multiplication by $e$ implies that $d$ is odd if 
$m \ge 3$. 

\subsubsection{}
The intersection in the next proposition is $W_\ell$ which is isomorphic to $(S^!_\ell)^*$.
 
 \begin{prop}
 \label{prop.w}
 With the notation in \S\ref{ssect.notn1}, 
 $\bigcap_{s+t=\ell-m} V^{\otimes s} \otimes R \otimes V^{\otimes t}$ is a 1-dimensional subspace of  $V^{\otimes \ell}$.
 \end{prop}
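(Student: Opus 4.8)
The plan is to recognize this intersection as the top graded component $W_\ell$ of the $m$-homogeneous dual $S^!$, and then to read off $\dim_k W_\ell=1$ from the shape of the minimal resolution of ${}_Sk$.

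First I would note that the condition $s+t=\ell-m$ defining the intersection is the same as $s+m+t=\ell$, so
$$
\bigcap_{s+t=\ell-m}V^{\otimes s}\otimes R\otimes V^{\otimes t}
$$
is exactly the space $W_\ell$ defined in (\ref{defn.Wi}) of Lemma \ref{lem.Wi}, applied to the $m$-homogeneous algebra $A=S=TV/(R)$. Since $S$ is AS-regular it is finitely presented, so $V$ and $R$, and hence every $W_i\subseteq V^{\otimes i}$, are finite dimensional.

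Next I would apply Lemma \ref{lem.Wi}, which furnishes an isomorphism $S^!_\ell\cong W_\ell^*$ of (finite-dimensional) vector spaces. Taking dimensions gives
$$
\dim_k\Big(\bigcap_{s+t=\ell-m}V^{\otimes s}\otimes R\otimes V^{\otimes t}\Big)=\dim_k W_\ell=\dim_k S^!_\ell.
$$
Finally, the preceding Proposition, which identifies the left-hand end of the minimal free resolution of ${}_Sk$, records that $\dim_k(S^!_\ell)=1$; substituting this into the display completes the argument.

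The only genuine input is the equality $\dim_k(S^!_\ell)=1$ supplied by the preceding Proposition, and this is where AS-regularity is used: the top term of the minimal resolution is the single free module $P_\ell=S\otimes(S^!_\ell)^*$, and the Gorenstein condition $\Ext^d_S(k,S)\cong k(\ell)$ forces this top module to have rank one, i.e. $\dim_k(S^!_\ell)=1$. Granting that, Proposition \ref{prop.w} is an immediate consequence of the $m$-homogeneous duality of Lemma \ref{lem.Wi}, with no further computation required.
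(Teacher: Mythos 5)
Your proof is correct and is exactly the argument the paper intends: the remark preceding Proposition \ref{prop.w} identifies the intersection with $W_\ell\cong(S^!_\ell)^*$ via Lemma \ref{lem.Wi}, and the preceding proposition supplies $\dim_k(S^!_\ell)=1$ from the Gorenstein condition on the minimal resolution of ${}_Sk$. You have simply made explicit the same two-step reduction that the paper leaves implicit, including the correct reason why the top term of the resolution has rank one.
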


 We retain the notation in \S\ref{ssect.notn1}. Dubois-Violette's theorem (Theorem \ref{thm.DV} above) says that $S$ is isomorphic to 
 $\cD(\sfw',\ell-m)$ for {\it some} twisted superpotential $\sfw' \in V^{\otimes \ell}$. Proposition \ref{prop.sdw} below shows 
 that we can take $\sfw'$ to be the element $\sfw$ defined in  \S\ref{ssect.notn1}(7).  
 This result is implicit in \cite[Thm. 11]{DV} however the following proof is short and explicit (cf. \cite[Thm.4.12]{WZ}).

\begin{lemma} 
\label{lem.spwl} 
Suppose $m,p,q$ are integers $\ge 0$. Let $R$ be a subspace of $V^{\otimes m}$ and 
$\sfw\in V^{\otimes p}\otimes R\otimes V^{\otimes q}$. 
If $\sfw$ is a twisted superpotential, then 
$$
\sfw \; \in \; \bigcap_{s+t=p+q} V^{\otimes s}\otimes R\otimes V^{\otimes t},
$$
where the intersection is taken over all integers $s,t \ge 0$ such that $s+t=p+q$.
\end{lemma}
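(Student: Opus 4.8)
The plan is to exploit the twisted-superpotential identity $(\sigma \otimes \id^{\otimes (p+m+q-1)})\phi(\sfw) = \sfw$ to slide the copy of $R$ both leftward and rightward through the tensor factors, one slot at a time, starting from the given position $V^{\otimes p} \otimes R \otimes V^{\otimes q}$. Throughout, $\sigma \in \GL(V)$ is the automorphism for which $\sfw$ is a $\sigma$-twisted superpotential, and the total tensor degree of $\sfw$ is $p+m+q$.

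First I would set $\Psi := (\sigma \otimes \id^{\otimes (p+m+q-1)}) \circ \phi$, so the hypothesis reads $\Psi(\sfw) = \sfw$, and note that $\Psi$ is invertible since both $\phi$ and $\sigma$ are. The key computation is to track how $\Psi$ moves the position of $R$. Because $\phi$ carries the last tensor slot to the front, for any $t \geq 1$ we have $\phi\big(V^{\otimes s} \otimes R \otimes V^{\otimes t}\big) \subseteq V^{\otimes (s+1)} \otimes R \otimes V^{\otimes (t-1)}$; applying $\sigma$ only to the first factor preserves this subspace, so $\Psi\big(V^{\otimes s} \otimes R \otimes V^{\otimes t}\big) \subseteq V^{\otimes (s+1)} \otimes R \otimes V^{\otimes (t-1)}$. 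Symmetrically, $\phi^{-1}$ carries the first slot to the back, so for $s \geq 1$ one gets $\Psi^{-1}\big(V^{\otimes s} \otimes R \otimes V^{\otimes t}\big) \subseteq V^{\otimes (s-1)} \otimes R \otimes V^{\otimes (t+1)}$.

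Then I would iterate. Starting from $\sfw \in V^{\otimes p} \otimes R \otimes V^{\otimes q}$ and using $\sfw = \Psi^j(\sfw)$, the rightward-shift inclusion applied $j$ times (valid as long as a free slot remains on the right, i.e. for $1 \le j \le q$) gives $\sfw \in V^{\otimes (p+j)} \otimes R \otimes V^{\otimes (q-j)}$, covering every $s$ from $p$ to $p+q$. Likewise $\sfw = \Psi^{-j}(\sfw)$ together with the leftward shift (valid for $1 \le j \le p$) gives $\sfw \in V^{\otimes (p-j)} \otimes R \otimes V^{\otimes (q+j)}$, covering every $s$ from $0$ to $p$. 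Together these place $\sfw$ in $V^{\otimes s} \otimes R \otimes V^{\otimes t}$ for all $s,t \geq 0$ with $s+t = p+q$, which is exactly membership in the claimed intersection.

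The argument is essentially bookkeeping, and the only point that needs care — the sole potential obstacle — is verifying that applying $\sigma$ to the first tensor factor does not disturb the placement of $R$. This is fine precisely because after applying $\phi$ the leading slot is always a free copy of $V$ rather than part of $R$ (here $s+1 \geq 1$), so $\sigma$ maps that $V$ isomorphically to itself and leaves the subspace $V^{\otimes (s+1)} \otimes R \otimes V^{\otimes (t-1)}$ invariant; the analogous remark handles $\Psi^{-1}$ when $s \geq 1$.
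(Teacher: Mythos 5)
Your proof is correct and takes essentially the same route as the paper: the paper's proof also uses the identity $\sfw=(\sigma\otimes\id^{\otimes\ell-1})\phi(\sfw)$ to shift the copy of $R$ one slot to the left, and $\sfw=\phi^{-1}(\sigma^{-1}\otimes\id^{\otimes\ell-1})(\sfw)$ to shift it one slot to the right, finishing by induction. Your write-up merely packages the two one-step shifts as powers of a single invertible map $\Psi$ and spells out the bookkeeping (including why applying $\sigma$ to the leading free $V$-slot is harmless), which the paper leaves implicit.
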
 
\begin{proof} 
Let $\s \in \GL(V)$ be such that $(\s \otimes \id)\phi(\sfw)=\sfw$. Then 
$$\begin{array}{ll}
\sfw=(\s\otimes \id)\phi (\sfw)\in V^{\otimes p+1}\otimes R\otimes V^{\otimes q-1} & \textnormal { if } q\geq 1, \text{ and}\\
\sfw=\phi^{-1}(\s^{-1}\otimes \id)\sfw\in V^{\otimes p-1}\otimes R\otimes V^{\otimes q+1} & \textnormal { if } p\geq 1. \\
\end{array}$$
An induction argument completes the proof.  
\end{proof}

 \begin{proposition} 
 \label{prop.sdw} 
 Let $\ell \ge m \ge 2$. Let $\sfw \in V^{\otimes \ell}$. 
 Suppose $\cD(\sfw, \ell-m)$ is an $m$-Koszul AS-regular algebra of Gorenstein parameter $\ell$. Let $R=\pd^{\ell-m}(k\sfw)$. 
 If $\sfw$ is a twisted superpotential, then 
 $$
 k\sfw \; = \;  \bigcap_{s+t+m=\ell} V^{\otimes s}\otimes R\otimes V^{\otimes t}, 
 $$
 where the intersection is taken over all integers $s,t \ge 0$ such that $s+m+t=\ell$.
\end{proposition}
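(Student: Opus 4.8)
The plan is to show that $\sfw$ itself lies in the intersection on the right-hand side, and then to invoke the fact that this intersection is one-dimensional. Write $W_\ell := \bigcap_{s+m+t=\ell} V^{\otimes s}\otimes R\otimes V^{\otimes t}$. Since $\cD(\sfw,\ell-m)=TV/(R)$ with $R=\pd^{\ell-m}(k\sfw)\subseteq V^{\otimes m}$ and $m\ge 2$, this algebra is exactly an algebra $S$ of the type in \S\ref{ssect.notn1}: one has $S_1=V$, and the degree-$m$ part of the ideal $(R)$ is $R$ itself, so $\ker(V^{\otimes m}\to S_m)=R$. Hence Proposition \ref{prop.w} applies and gives $\dim_k W_\ell=1$. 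Consequently, once I know $k\sfw\subseteq W_\ell$ together with $\sfw\ne 0$, the equality $k\sfw=W_\ell$ is automatic.

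First I would record the key inclusion $\sfw\in V^{\otimes(\ell-m)}\otimes R$, which is essentially the definition of $R=\pd^{\ell-m}(k\sfw)$ read backwards. Choosing a basis $\{e_\beta\}$ of $V^{\otimes(\ell-m)}$ with dual basis $\{e_\beta^*\}\subseteq (V^*)^{\otimes(\ell-m)}=(V^{\otimes(\ell-m)})^*$ and using the pairing convention of \S\ref{VV*.convention}, I can expand
$$
\sfw \;=\; \sum_\beta e_\beta \otimes \bigl((e_\beta^*\otimes \id^{\otimes m})(\sfw)\bigr).
$$
Each slot $(e_\beta^*\otimes \id^{\otimes m})(\sfw)$ is an iterated left partial derivative of $\sfw$ — applying the $\ell-m$ functionals making up $e_\beta^*$ to the first $\ell-m$ tensor legs leaves precisely the last $m$ legs — and therefore lies in $\pd^{\ell-m}(k\sfw)=R$. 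This exhibits $\sfw$ as an element of $V^{\otimes p}\otimes R\otimes V^{\otimes q}$ with $p=\ell-m$ and $q=0$.

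With this in hand I would apply Lemma \ref{lem.spwl}. Since $\sfw$ is assumed to be a twisted superpotential, the lemma (taken with $p=\ell-m$, $q=0$) yields $\sfw\in\bigcap_{s+t=\ell-m} V^{\otimes s}\otimes R\otimes V^{\otimes t}$, and since $s+t=\ell-m$ is the same constraint as $s+m+t=\ell$, this intersection is exactly $W_\ell$. The role of the lemma here is precisely to propagate the single factor $R$ appearing on the right of $\sfw$ into every admissible position. Thus $k\sfw\subseteq W_\ell$. Finally, $\sfw$ cannot be zero: if it were, then $R=\pd^{\ell-m}(0)=0$ would force $W_\ell=0$, contradicting $\dim_k W_\ell=1$. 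Hence $k\sfw$ is one-dimensional and contained in the one-dimensional space $W_\ell$, giving $k\sfw=W_\ell$ as required.

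As for where the substance lies, the two genuine ingredients — that the intersection is one-dimensional, and that a twisted superpotential sitting in one $R$-slot automatically sits in all of them — are Proposition \ref{prop.w} and Lemma \ref{lem.spwl}, both already available. The only remaining obstacle is the bookkeeping of the first step: correctly matching the \emph{left}-partial-derivative definition of $R$ with the tensor-leg decomposition of $\sfw$, so that ``differentiating from the left'' lines up with ``the last $m$ tensor legs'' under the conventions of \S\ref{VV*.convention}. This is routine but is the one place where the definitions must be unwound with care.
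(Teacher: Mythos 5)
Your proof is correct and takes essentially the same route as the paper's: establish $\sfw\in V^{\otimes \ell-m}\otimes R$ directly from the definition of $\pd^{\ell-m}$, propagate the $R$-slot to all positions via Lemma \ref{lem.spwl}, and conclude from $\dim_k W_\ell=1$ (Proposition \ref{prop.w}). Your explicit basis expansion and the check that $\sfw\neq 0$ merely spell out details the paper leaves implicit.
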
 
\begin{proof}  
By definition, $\cD(\sfw,\ell-m)=TV/(R)$. It follows from the definition of $\pd^{\ell-m}$ that $\sfw \in V^{\otimes \ell-m}\otimes R$.  
Since $\sfw$ is a twisted superpotential, Lemma \ref{lem.spwl} tells us that $\sfw\in W_{\ell}$. But $\dim_k(W_\ell)=1$.  
\end{proof}

\subsection{The algebra structure on $\Ext^*_S(k,k)$}
\label{sse.cE}

The {\sf Yoneda Ext-algebra} of  a connected graded $k$-algebra $A$  is 
$$
E(A) \; := \; \bigoplus_{i=0}^\infty\Ext^i_A(k, k).
$$
Because $A$ is a graded $k$-algebra, $E(A)$ is bi-graded. Several notations are used for its homogeneous components, namely $E^{ij}(A)=E^i(A)_{-j}=\Ext^i_A(k, k)_{-j}$. 

We denote the product on $A^!$ by $f\cdot g$. 

Following \cite[\S2]{HL}, we define
$$
\rho (i):=\begin{cases} jm & \textnormal { if $i=2j$ } \\  jm+1 & \textnormal { if $i=2j+1$}\end{cases}
$$
and the bigraded vector space 
$$
\cE \; =\; \cE(A) = \bigoplus \cE^i_j \qquad \hbox{where} \qquad
\cE^i_j = \begin{cases} 
             0 & \text {if $j \ne -\rho(i)$}
             \\
             A^!_{\rho(i)} &  \text {if $j = -\rho(i)$.}
            \end{cases} 
$$
We define a multiplication $*$ on $\cE$ as follows:  if $f \in \cE^i$ and $g \in \cE^j$, then
\begin{equation}
\label{mult.on.cE}
f * g \; =\; \begin{cases} 
             (-1)^{ij}f\cdot g & \text{if $m=2$}
             \\
             f \cdot g   & \text{if $m>2$ and at least one of $i$ and $j$ is even}
             \\
             0 & \text{otherwise.}
             \end{cases}
\end{equation}

\begin{prop}
\textnormal{\cite[Prop. 3.1]{BM}, \cite[Prop. 2.3]{HL}.}
Let $A$ be an $m$-Koszul algebra. The direct sum of the vector space isomorphisms $A^!_{\rho(i)} \to \Ext^i_A(k,k)$ is an 
isomorphism of bigraded $k$-algebras, 
$$
\cE^*_\hdot  \stackrel{\sim}{\longrightarrow} \Ext^*_A(k,k)_\hdot.
$$
 \end{prop}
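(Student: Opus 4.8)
The plan is to realize $\Ext^*_A(k,k)$ on Berger's $m$-Koszul complex (Theorem~\ref{thm.berger}), which is the minimal free resolution of ${}_Ak$, and then to compute the Yoneda product by comparison maps built out of the multiplication of $A^!$. Write $P_i=A\otimes (A^!_i)^*$ as in Theorem~\ref{thm.berger}, and let $Q_i:=P_{\rho(i)}$ be the term in homological degree $i$; thus the differential $Q_i\to Q_{i-1}$ is right multiplication by $e$ when $i$ is odd and by $e^{m-1}$ when $i$ is even, and $\rho$ is engineered precisely so that these two steps account for the internal-degree jumps (by $1$ and by $m-1$) between consecutive terms. Since every entry of each differential lies in $A_{\ge 1}$, the complex $\Hom_A(Q_\hdot,k)$ has zero differentials, so $\Ext^i_A(k,k)\cong \Hom_A(Q_i,k)\cong A^!_{\rho(i)}$, concentrated in internal degree $-\rho(i)$. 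These assemble into the stated bigraded vector-space isomorphism $\cE\to \Ext^*_A(k,k)$, and the remaining task is to show it carries $*$ to the Yoneda product.

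For the product, I would lift each cohomology class to a chain map. Given $\alpha\in\Ext^j\cong A^!_{\rho(j)}$, represented by a cocycle $\bar\alpha\colon Q_j\to k$, the natural lift is the map $\phi^\alpha_\hdot\colon Q_{\hdot+j}\to Q_\hdot$ whose component $\phi^\alpha_n$ is $\id_A$ tensored with the transpose of right multiplication $g\mapsto g\cdot\alpha$, $A^!_{\rho(n)}\to A^!_{\rho(n)+\rho(j)}$, followed by the identification $A^!_{\rho(n)+\rho(j)}=A^!_{\rho(n+j)}$ that is available exactly when $\rho(n)+\rho(j)=\rho(n+j)$. Verifying that $\phi^\alpha_\hdot$ is a chain map reduces to associativity of the $A^!$-product against the left-multiplication operators $\cdot e$ and $\cdot e^{m-1}$; the alternation of these two differentials is matched by the alternation of parities along $\phi^\alpha$ when $j$ is odd. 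Once this is in place, the Yoneda product of $\beta\in\Ext^i$ with $\alpha$ is represented by $\bar\beta\circ\phi^\alpha_i\colon Q_{i+j}\to Q_i\to k$, which under the identifications is exactly the product $\beta\cdot\alpha\in A^!_{\rho(i)+\rho(j)}=A^!_{\rho(i+j)}$ in $A^!$.

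This already gives $f*g=f\cdot g$ in the cases covered by the second line of \eqref{mult.on.cE} (when $m>2$ and $i$ or $j$ is even), and for $m=2$ one must additionally track the Koszul signs produced by the grading of the Hom-complex to obtain the factor $(-1)^{ij}$. The remaining case is $m>2$ with both $i$ and $j$ odd: here $\rho(i)+\rho(j)=\rho(i+j)-(m-2)<\rho(i+j)$, so the identification used to define $\phi^\alpha$ fails in the relevant degree and any admissible lift is forced through the factor $e^{m-1}$; after applying $\Hom_A(-,k)$ its contribution lies in the image of a differential and therefore vanishes in $\Ext^{i+j}$, matching the third line of \eqref{mult.on.cE}.

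I expect the delicate points to be, first, the explicit verification that $\phi^\alpha_\hdot$ is a genuine chain map across the alternating $e$/$e^{m-1}$ differentials (and not merely a degreewise map), and second, the sign and vanishing bookkeeping in the parity-dependent cases just discussed, especially the claim that in the odd--odd case every chain-map lift induces the zero map on $\Ext^{i+j}$ for $m>2$. These are precisely the computations carried out in \cite[Prop.~3.1]{BM} and \cite[Prop.~2.3]{HL}.
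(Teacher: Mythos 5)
The paper offers no proof of this proposition at all---it is quoted directly from \cite[Prop.~3.1]{BM} and \cite[Prop.~2.3]{HL}---and your outline is correct and follows the same route as Berger--Marconnet's proof: realize $\Ext^*_A(k,k)$ on the minimal $m$-Koszul resolution of ${}_Ak$ (so minimality gives $\Ext^i_A(k,k)\cong A^!_{\rho(i)}$ in pure internal degree $-\rho(i)$) and compute Yoneda products by lifting cocycles through duals of right multiplication in $A^!$. The one step you flag as delicate, the vanishing in the odd--odd case for $m>2$, can be handled more cleanly than by analyzing which lifts exist: by purity $\Ext^{i+j}_A(k,k)$ is concentrated in internal degree $-\rho(i+j)$, whereas the Yoneda product of classes in internal degrees $-\rho(i)$ and $-\rho(j)$ must land in internal degree $-\rho(i)-\rho(j)=-\rho(i+j)+(m-2)\neq-\rho(i+j)$, so the product is zero for bidegree reasons alone, with no chain-level bookkeeping required.
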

 
When $A$ is an $m$-Koszul algebra we identify $\cE$ with $\Ext^*_A(k,k)$.

\subsubsection{The automorphisms $\s^!$ of $\cE$}
\label{ssect.automs}
It follows from the definition of multiplication on $\cE$ that every graded $k$-algebra automorphism of $A^!$ ``restricts'' to 
a graded $k$-algebra automorphism of $\cE$. Thus, if $\s \in \Aut(A)$ and $\s^!$ is the automorphism of $A^!$ induced by $\s^*$, then
$\s^!$ ``restricts'' to an automorphism of $\cE$. We continue to denote that automorphism of $\cE$ by $\s^!$
(cf., \cite [p.57]{WZ}).    

Let $\s \in \Aut(A)$. By Lemma \ref{lem.Wi}, $\Ext^i_A(k,k)$ is a quotient of $(V^*)^{\otimes i}$ so the action of $\s^!$
on $\Ext^i_A(k,k)$ is induced by the action of $(\s^!)^{\otimes i}$ on $(V^*)^{\otimes i}$. In particular, that is the meaning of $\s^!$ in 
(\ref{Wu.Zhu.hdet}).\footnote{The convention we use in \S\ref{VV*.convention} to identify $(V^*)^{\otimes i}$ with 
$(V^{\otimes i})^*$ differs from that in \cite{WZ}. The paper \cite{WZ} uses the convention 
$(\xi_1\otimes \cdots \otimes \xi_i)(v_1\otimes \cdots \otimes v_i)=\xi _i(v_1)\cdots \xi_1(v_i)$.
Under that convention, if $\s_1, \dots, \s_i\in \GL(V)$ and $\xi_1, \dots, \xi _i\in V^*$, 
then $(\s_1\otimes \cdots \otimes \s_i)^*(\xi_1\otimes \cdots \otimes \xi_i)=\s_i^*(\xi_1)  \cdots \s_1^*(\xi_i)$.
However, if  $\s\in \GL(V)$ both conventions give $(\s\otimes \cdots \otimes \s)^*(\xi _1\otimes \cdots \otimes \xi _i)=\s^*(\xi _1) \cdots  \s^*(\xi _i)$. Therefore Proposition \ref{prop.WZ} holds as stated under both conventions.}

\begin{prop} \textnormal {\cite [Prop. 1.11]{WZ}} \label{prop.WZ}  
 With the notation in \S\ref{ssect.notn1}, the homological determinant of $\s\in \Aut( S)$ has the property that 
\begin{equation}
\label{Wu.Zhu.hdet}
 \hdet (\s) u = \s^!(u)
 \end{equation}
 for all $u\in \Ext^d_S(k,k)$.
 \end{prop}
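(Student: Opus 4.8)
The plan is to reduce both sides of the asserted identity to a single scalar $c(\s)\in k^\times$, namely the scalar by which $\s$ acts on the one-dimensional top term of the minimal free resolution of ${}_Sk$. By the description of that resolution (the $m$-Koszul complex of Theorem \ref{thm.berger}), its top term is $P_d=S\otimes (S^!_\ell)^*\cong S(-\ell)$; under the identification $(S^!_\ell)^*\cong W_\ell=k\sfw$ coming from Lemma \ref{lem.Wi} and Proposition \ref{prop.w}, the coefficient space $W_\ell$ is one-dimensional and $\s^{\otimes\ell}$-stable, so I define $c(\s)$ by $\s^{\otimes \ell}|_{W_\ell}=c(\s)\,\id$.

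The first reduction is essentially bookkeeping from the paper's own setup. Since $\Ext^d_S(k,k)=\cE^d=S^!_\ell$ and $\s$ acts on it by $\s^!$ (by \S\ref{ssect.automs}), Lemma \ref{lem.Wi} identifies $S^!_\ell$ with $W_\ell^*$ and identifies the action of $\s^!$ with $(\s^{\otimes\ell}|_{W_\ell})^*$. The transpose of a scalar is the same scalar, so $\s^!|_{\Ext^d_S(k,k)}=c(\s)$, and it remains only to prove $\hdet(\s)=c(\s)$.

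The second reduction connects the homological determinant, defined in Theorem \ref{thm.JZ} through the $d$-th local cohomology, to this same scalar. First I would identify the socle: writing $H^d_\fm(S)=\liminj_n\Ext^d_S(S/S_{\ge n},S)$, the surjections $S/S_{\ge n}\to k$ induce a $\s$-equivariant map $\Ext^d_S(k,S)\to H^d_\fm(S)$ whose image is the socle; since $\Ext^d_S(k,S)\cong k(\ell)$ and the socle of $H^d_\fm(S)\cong S'(\ell)$ is also one-dimensional and sits in internal degree $-\ell$, this map is an isomorphism $\Ext^d_S(k,S)\xrightarrow{\sim}\soc\!\big(H^d_\fm(S)\big)$. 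Computing $\Ext^d_S(k,S)$ from $P_\bullet$ as a subquotient of $\Hom_S(P_d,S)=W_\ell^*\otimes S$, the relevant class lives in $W_\ell^*\otimes S_0$; since $\s$ acts by $c(\s)$ on $W_\ell^*$ and trivially on $S_0=k$, this gives $H^d_\fm(\s)|_{\soc}=c(\s)$. On the other hand, evaluating the defining diagram of Theorem \ref{thm.JZ}(3) on the socle and using that $(\s^{-1})'$ restricts to the identity on $(S_0)^*$, because $\s|_{S_0}=\id_k$, yields $H^d_\fm(\s)|_{\soc}=\hdet(\s)$. Comparing the two computations gives $\hdet(\s)=c(\s)$, which together with the first reduction proves $\hdet(\s)\,u=\s^!(u)$ for all $u\in\Ext^d_S(k,k)$.

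The main obstacle is the precise bookkeeping in the second reduction: tracking the $\s$-semilinearity of the lift $\tilde\s_\bullet$ together with the several dualizations in play (the transpose on $W_\ell^*$, the Matlis dual $(\s^{-1})'$, and the socle-versus-top duality implicit in the local cohomology computation) so that the scalar emerges as $c(\s)$ and not $c(\s)^{-1}$. I would fix the convention once and for all by testing the commutative polynomial ring $S=k[x_1,\dots,x_n]$, where $P_\bullet$ is the Koszul complex, $W_\ell=\Lambda^nV$, and every scalar in sight equals $\det(\s)=\hdet(\s)$, confirming the signs.
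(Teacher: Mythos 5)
The paper does not actually prove this proposition: it is imported verbatim from \cite[Prop.~1.11]{WZ}, the only in-paper commentary being the footnote in \S\ref{ssect.automs} reconciling the two papers' conventions for identifying $(V^*)^{\otimes i}$ with $(V^{\otimes i})^*$. So your attempt is being measured against a citation, not a proof, and what you propose is essentially the standard argument behind the cited result. Your first reduction is correct and purely definitional: $\rho(d)=\ell$ (Lemma \ref{lem.dl}), $\Ext^d_S(k,k)\cong S^!_\ell\cong W_\ell^*$ (Lemma \ref{lem.Wi}), and the paper's $\s^!$ acts on this quotient of $(V^*)^{\otimes\ell}$ by the contragredient $(\s^{\otimes\ell}|_{W_\ell})^*$, i.e., by your scalar $c(\s)$. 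You also correctly avoid circularity: Theorem \ref{thm.msss}, which is the identity $\hdet(\s)=c(\s)$ in disguise, is deduced in the paper \emph{from} Proposition \ref{prop.WZ}, and you never invoke it.

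Two points in your second reduction need repair. First, the isomorphism $\Ext^d_S(k,S)\to\soc H^d_\fm(S)$ does not follow from both sides being one-dimensional in internal degree $-\ell$; you must show the map is nonzero. This is fixable: filtering $S_{\ge 1}/S_{\ge n}$ by shifts of $k$ and using the AS-Gorenstein vanishing $\Ext^{d-1}_S(k,S)=0$ gives $\Ext^{d-1}_S(S_{\ge1}/S_{\ge n},S)=0$, so each $\Ext^d_S(k,S)\to\Ext^d_S(S/S_{\ge n},S)$ is injective and the class survives in the colimit — but as written this is asserted, not proved. Second, and more substantively, your intermediate scalars are inverted under the natural convention. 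The functorial $\s$-linear action on $\Hom_S(P_d,S)\cong W_\ell^*\otimes S$ is $f\mapsto \s\circ f\circ\tilde\s_d^{-1}$ with $\tilde\s_d=\s\otimes\s^{\otimes\ell}|_{W_\ell}$; precomposition with $\tilde\s_d^{-1}$ scales the $W_\ell^*$ factor by $c(\s)^{-1}$, not $c(\s)$ — it is \emph{not} the contragredient action used in your first reduction. Correspondingly, in Jorgensen--Zhang's setup $\hdet(\s)$ is the \emph{inverse} of the scalar by which the naturally induced map acts on the socle: test $S=k[x]$, $\s(x)=\lambda x$, where the natural action on the socle $x^{-1}$ of $H^1_\fm(S)=k[x,x^{-1}]/k[x]$ is multiplication by $\lambda^{-1}$ while $\hdet(\s)=\det(\s)=\lambda$. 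Thus both of your displayed identities, $H^d_\fm(\s)|_{\soc}=c(\s)$ and $H^d_\fm(\s)|_{\soc}=\hdet(\s)$, are off by the same inversion, and the two errors cancel to yield the correct conclusion $\hdet(\s)=c(\s)$. You explicitly flagged this $c(\s)$-versus-$c(\s)^{-1}$ trap and proposed calibrating on the polynomial ring; that calibration is legitimate here (the ambiguity is a single uniform convention, and the $k[x]$ computation above is exactly what it detects), so the plan is sound, but a final writeup must carry the inverses consistently rather than state the two intermediate scalars as they currently stand.
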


\section{Homological Determinants} 
\label{ssect.hom.det}

Theorem \ref{thm.msss}  gives a simple formula for the homological determinant  of an $m$-Koszul AS-regular algebra. 
This formula is the key to several results in later sections.

 \begin{lem}
Let $\s \in \GL(V)$ and $W \subseteq V^{\otimes j}$. If $\s^{\otimes j}(W)=W$, then  $\s \in \Aut\big(\cD(W,i)\big)$ for all $i$.
\end{lem}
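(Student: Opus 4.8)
The plan is to reduce the statement to the characterization
$$
\Aut\big(\cD(W,i)\big) \; = \; \{\s\in\GL(V) \mid \s^{\otimes(j-i)}(\pd^iW)=\pd^iW\},
$$
which is the instance of the identity $\Aut(TV/(R))=\{\s\mid \s^{\otimes m}(R)=R\}$ recorded in \S\ref{sse.m.Koszul}, applied to the $(j-i)$-homogeneous algebra $\cD(W,i)=TV/(\pd^iW)$ with $\pd^iW\subseteq V^{\otimes(j-i)}$. Granting this, it suffices to show that the hypothesis $\s^{\otimes j}(W)=W$ forces $\s^{\otimes(j-i)}(\pd^iW)=\pd^iW$ for every $i$, and this I would get by showing the hypothesis is inherited by $\pd W$ (with $j$ replaced by $j-1$) and then inducting.

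The heart of the matter is a single equivariance identity for the partial-derivative operation. First I would observe that $\pd W$ is the image of the linear map $V^*\otimes W\to V^{\otimes(j-1)}$, $\psi\otimes w\mapsto (\psi\otimes\id^{\otimes j-1})(w)$, and hence is a subspace. Next I would establish, as maps $V^{\otimes j}\to V^{\otimes(j-1)}$, the commutation relation
$$
\s^{\otimes(j-1)}\circ(\psi\otimes\id^{\otimes j-1}) \; = \; \big((\psi\circ\s^{-1})\otimes\id^{\otimes j-1}\big)\circ\s^{\otimes j},
$$
which is checked on a decomposable tensor $v_1\otimes\cdots\otimes v_j$ by noting that both sides return $\psi(v_1)\,\s(v_2)\otimes\cdots\otimes\s(v_j)$, using $(\psi\circ\s^{-1})(\s(v_1))=\psi(v_1)$.

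Applying this identity to $w\in W$ and using $\s^{\otimes j}(w)\in W$ shows that $\s^{\otimes(j-1)}$ carries each generator of $\pd W$ to the element $\big((\psi\circ\s^{-1})\otimes\id^{\otimes j-1}\big)(\s^{\otimes j}(w))\in\pd W$; hence $\s^{\otimes(j-1)}(\pd W)\subseteq\pd W$. Running the same computation with $\s^{-1}$ in place of $\s$ — legitimate because $\s^{\otimes j}(W)=W$ gives $(\s^{-1})^{\otimes j}(W)=W$ — yields the reverse inclusion, so $\s^{\otimes(j-1)}(\pd W)=\pd W$. This is exactly the original hypothesis for $\pd W$, so an induction on $i$ with base case $\pd^0W=W$ delivers $\s^{\otimes(j-i)}(\pd^iW)=\pd^iW$ for all $i$, completing the reduction.

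I do not expect a genuine obstacle: the whole argument is the equivariance identity together with a one-line induction. The only point that demands care is bookkeeping — tracking that each $\pd$ lowers the tensor degree by one (so that $\pd^iW\subseteq V^{\otimes(j-i)}$), and checking that the correct twist of the covector is $\psi\mapsto\psi\circ\s^{-1}$ rather than $\psi\circ\s$; getting this twist backwards is the one way the computation could go wrong.
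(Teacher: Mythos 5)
Your proof is correct and takes essentially the same route as the paper's: the paper also reduces to showing $\s^{\otimes (j-i)}(\pd^i W)=\pd^i W$ by induction on $i$, the $i=1$ step being exactly your equivariance identity, phrased there as the substitution $\psi \mapsto \psi\s$ inside the set $\{(\psi\otimes\id^{\otimes j-1})(\s^{\otimes j}(\sfv)) \mid \psi\in V^*,\ \sfv\in W\}$. The only cosmetic difference is that the paper gets equality in a single pass (since $\psi\mapsto\psi\s$ is a bijection of $V^*$ and $\s^{\otimes j}(W)=W$ is already an equality), whereas you prove two inclusions by running the computation for $\s$ and then $\s^{-1}$.
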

\begin{pf}
To prove the result it suffices to show that $\s^{\otimes j-i}(\partial ^i W)=\partial ^iW$ for all $i=0,\ldots,j$.
First we prove the claim for $i=1$. It is clear that 
\begin{align*}
\s^{\otimes j-1}(\partial W) 
& =\{\s^{\otimes j-1}(\psi \otimes \id^{\otimes j-1})(\sfv)\; | \; \psi\in V^*, \sfv\in W\} \\
& =\{(\psi \otimes \s^{\otimes j-1})(\sfv)\; | \; \psi \in V^*, \sfv\in W\} \\
& =\{(\psi\s \otimes \s^{\otimes j-1})(\sfv)\; | \; \psi \in V^*, \sfv\in W\} \\
& =\{(\psi\otimes \id^{\otimes j-1})(\s \otimes \s^{\otimes j-1})(\sfv)\; | \; \psi \in V^*, \sfv\in W\} \\
& =\partial (\s^{\otimes j}(W)) \\
& =\partial W.
\end{align*}
It now follows by induction that $\s^{\otimes j-i}(\partial ^i W)=\partial ^iW$ for all $i$. 

By definition, $\cD(W,i)=TV/(\pd^iW)$  so $\s$ is an automorphism of $\cD(W,i)$. 
\end{pf}

\begin{theorem}
 \label{thm.ws} 
With the notation in \S\ref{ssect.notn1},  
$$
\Aut(S)\;=\; \{\s \in \GL(V) \; | \; \s^{\otimes \ell}(k \sfw)=k \sfw\}.
$$  
\end{theorem}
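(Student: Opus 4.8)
The statement is an equality of two subgroups of $\GL(V)$, so the plan is to prove it by double inclusion, with almost all of the real content already packaged in the unnumbered Lemma immediately preceding the theorem, in Dubois-Violette's Theorem \ref{thm.DV}, and in Proposition \ref{prop.w}. I will use two structural facts throughout. First, by Theorem \ref{thm.DV} we have $S = \cD(\sfw,\ell-m) = TV/(\partial^{\ell-m}(k\sfw))$, so that the relation space is $R = \partial^{\ell-m}(k\sfw)$. Second, by \S\ref{ssect.notn1}(7) together with Proposition \ref{prop.w}, the line $k\sfw$ coincides with the one-dimensional space $W_\ell = \bigcap_{s+t=\ell-m} V^{\otimes s}\otimes R\otimes V^{\otimes t}$.

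For the inclusion $\supseteq$, I would start from a $\sigma \in \GL(V)$ satisfying $\sigma^{\otimes\ell}(k\sfw)=k\sfw$ and simply invoke the preceding Lemma with $W = k\sfw \subseteq V^{\otimes\ell}$, with $j = \ell$, and with $i = \ell-m$. That Lemma then yields $\sigma \in \Aut\big(\cD(k\sfw,\ell-m)\big) = \Aut\big(\cD(\sfw,\ell-m)\big) = \Aut(S)$, the last identification being Theorem \ref{thm.DV}. This direction therefore requires no new computation beyond matching the hypotheses of the Lemma.

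For the inclusion $\subseteq$, I would take $\sigma \in \Aut(S)$, which by \S\ref{ssect.notn1}(4) means precisely $\sigma^{\otimes m}(R)=R$. Since $\sigma \in \GL(V)$ we have $\sigma^{\otimes s}(V^{\otimes s})=V^{\otimes s}$ for every $s$, so for each pair $s,t\ge 0$ with $s+t=\ell-m$ the factorization $\sigma^{\otimes\ell} = \sigma^{\otimes s}\otimes\sigma^{\otimes m}\otimes\sigma^{\otimes t}$ shows that $\sigma^{\otimes\ell}$ carries $V^{\otimes s}\otimes R\otimes V^{\otimes t}$ onto $V^{\otimes s}\otimes\sigma^{\otimes m}(R)\otimes V^{\otimes t} = V^{\otimes s}\otimes R\otimes V^{\otimes t}$. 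Consequently $\sigma^{\otimes\ell}$ preserves the whole intersection $W_\ell = k\sfw$, i.e. $\sigma^{\otimes\ell}(k\sfw)=k\sfw$, as required.

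I do not expect a genuine obstacle once the two structural inputs are in hand: the interaction of $\sigma$ with the partial-derivative operator $\partial$ is entirely absorbed into the preceding Lemma, and the identification $k\sfw = W_\ell$ is supplied by Proposition \ref{prop.w}. The only point deserving care is that in the $\subseteq$ direction one must use that the defining intersection of $W_\ell$ ranges over \emph{all} decompositions with $s+t=\ell-m$, so that the single condition $\sigma^{\otimes m}(R)=R$ suffices to fix the entire intersection; the one-dimensionality from Proposition \ref{prop.w} then guarantees that fixing the subspace $W_\ell$ is the same as fixing the line $k\sfw$, closing the equivalence.
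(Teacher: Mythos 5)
Your proof is correct and follows essentially the same route as the paper's: the inclusion $\subseteq$ via $\sigma^{\otimes m}(R)=R$ forcing $\sigma^{\otimes\ell}$ to preserve each $V^{\otimes s}\otimes R\otimes V^{\otimes t}$ and hence the line $W_\ell=k\sfw$, and the inclusion $\supseteq$ by feeding $W=k\sfw$ into the preceding Lemma together with the identification $S=\cD(\sfw,\ell-m)$. The only cosmetic difference is that the paper cites Proposition \ref{prop.sdw} where you cite Theorem \ref{thm.DV} for that identification, which amounts to the same input.
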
 
\begin{proof}  
Recall that $V=S_1$. Let $\s \in \GL(V)$.

Suppose $\s \in \Aut(S)$. By definition, $R=\ker(\xymatrix{V^{\otimes m} \ar[r]^{\rm mult} & S_m})$, so $\s^{\otimes m}(R)=R$.
Therefore $\s^{\otimes i}(V^{\otimes s}\otimes R\otimes V^{\otimes t})=V^{\otimes s}\otimes R\otimes V^{\otimes t}$ for all $s$ and $t$ such that 
$s+m+t=i$. Hence $\s^{\otimes i}(W_{i})=W_{i}$ for all $i$.
But $W_\ell=k\sfw$ so  $\s^{\otimes \ell}(k\sfw)=k\sfw$.

Conversely, suppose $\s^{\otimes \ell}(k\sfw)=k\sfw$. By Proposition \ref{prop.sdw}, $S=\cD(k\sfw, \ell-m)$, so $\s \in \Aut(S)$ by  Lemma 3.1.
\end{proof}

\begin{theorem} \label{thm.msss} 
Adopt the notation in \S\ref{ssect.notn1}. If $\s\in \Aut( S)$, then 
$$
\s^{\otimes \ell}(\sfw)=\hdet (\s)\sfw. 
$$ 
\end{theorem}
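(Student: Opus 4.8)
The plan is to reduce the statement to Proposition \ref{prop.WZ} after identifying the top Ext-group $\Ext^d_S(k,k)$ with the dual of the line $k\sfw$. To begin, since $\s\in\Aut(S)$, Theorem \ref{thm.ws} gives $\s^{\otimes\ell}(k\sfw)=k\sfw$, so there is a scalar $\lambda=\lambda(\s)\in k^\times$ with $\s^{\otimes\ell}(\sfw)=\lambda\sfw$. The entire content of the theorem is then the equality $\lambda=\hdet(\s)$, so from here on the only task is to recognize the scalar by which $\s^{\otimes\ell}$ scales $\sfw$ as the homological determinant.

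Next I would locate the top Ext-group inside the bigraded algebra $\cE$ of \S\ref{sse.cE}. Under the identification $\Ext^*_S(k,k)=\cE$ one has $\Ext^d_S(k,k)=\cE^d=S^!_{\rho(d)}$. Comparing the $m$-Koszul complex of Theorem \ref{thm.berger} with the minimal resolution displayed just before Proposition \ref{prop.w}, whose leftmost term $P_\ell=S\otimes(S^!_\ell)^*$ sits in homological degree $d$, forces $\rho(d)=\ell$. Hence $\Ext^d_S(k,k)=S^!_\ell$, which by Lemma \ref{lem.Wi} is canonically $W_\ell^*=(k\sfw)^*$, a one-dimensional space.

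The crux is then to compute the action of $\s^!$ on this line. By the description in \S\ref{ssect.automs}, $\s^!$ acts on $\Ext^d_S(k,k)=S^!_\ell$ through the map induced by $(\s^*)^{\otimes\ell}$ on $(V^*)^{\otimes\ell}=(V^{\otimes\ell})^*$, and with the pairing convention of \S\ref{VV*.convention} this map is exactly the transpose $(\s^{\otimes\ell})^*$. Since $\s^{\otimes\ell}$ preserves $W_\ell=k\sfw$ and acts there as multiplication by $\lambda$, its transpose preserves $W_\ell^\perp$ and descends on the quotient $(V^{\otimes\ell})^*/W_\ell^\perp\cong W_\ell^*$ to the transpose of $\lambda\cdot\id$, which on a one-dimensional space is again multiplication by $\lambda$. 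Thus $\s^!$ scales $\Ext^d_S(k,k)$ by $\lambda$, and Proposition \ref{prop.WZ}, applied to a nonzero $u\in\Ext^d_S(k,k)$, yields $\hdet(\s)=\lambda$, that is, $\s^{\otimes\ell}(\sfw)=\hdet(\s)\sfw$.

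The step I expect to be the main obstacle is the bookkeeping concentrated in the third paragraph: verifying $\rho(d)=\ell$, and, more delicately, checking that the chain of dualizations --- the pairing convention of \S\ref{VV*.convention}, the identification $\s^!\leftrightarrow\s^*$ of \S\ref{ssect.automs}, and the transpose of a scalar on the line $(k\sfw)^*$ --- conspire to produce $\lambda$ itself rather than $\lambda^{-1}$. The footnote attached to \S\ref{ssect.automs} concerning differing conventions is precisely flagging this danger, so that is where the care must be spent.
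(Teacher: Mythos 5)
Your proposal is correct and is essentially the paper's own proof: the paper likewise extracts the scalar $\lambda$ from Theorem \ref{thm.ws}, observes that $(\s^*)^{\otimes \ell}=(\s^{\otimes\ell})^*$ acts on $W_\ell^*\cong \Ext^d_S(k,k)$ as multiplication by $\lambda$ (the transpose of a scalar being that same scalar), and concludes $\lambda=\hdet(\s)$ from Proposition \ref{prop.WZ}. Your extra bookkeeping ($\rho(d)=\ell$ and the identification via Lemma \ref{lem.Wi}) just makes explicit what the paper leaves implicit in the step ``$W_\ell^*\cong\Ext^d_S(k,k)$''.
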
 
\begin{proof}  
By Theorem \ref{thm.ws}, $\s^{\otimes \ell}(\sfw)=\l\sfw$ for some $0\neq \l\in k$. Hence $\s^{\otimes \ell}$ is multiplication by $\l$
on $W_{\ell}=k\sfw$.  Therefore, if $u \in W_{\ell}^*$, then $(\s^*)^{\otimes \ell}(u)=(\s^{\otimes \ell})^*(u)=\l u$.
But $W_{\ell}^* \cong \Ext^d_S(k,k)$ and, by Proposition \ref{prop.WZ},  $\hdet (\s)u=(\s^*)^{\otimes \ell}(u)$  for all $u \in \Ext^d_S(k,k)$. Therefore $\l=\hdet (\s)$.  
\end{proof}

\section{Nakayama Automorphisms} 
\label{sect.nak.aut}

We continue to use the notation and assumptions in  \S\ref{ssect.notn1}. 
This section proves several results about  the Nakayama automorphisms of   $S$ and $\Ext_S^*(k,k)$.  

\subsection{}
\label{ssect.nak.autom1}
The term ``Nakayama automorphism'' has different meanings in the literature. We defined the term for  Calabi-Yau algebras
in \S\ref{ssect.CY}  but there is an older usage. 

A {\sf Frobenius algebra} is a finite dimensional $k$-algebra $R$ such that $R \cong R^*$ 
as left $R$-modules. Let $\a:R \to R^*$ be a left $R$-module isomorphism. The  bilinear form $\langle \cdot,\cdot \rangle:R \times R \to k$
defined  by $\langle x,y \rangle =\a(y)(x)$ has the property that $\langle a,bc\rangle  
= \langle ab,c\rangle$ and, because $\a$ is injective, $\langle \cdot,\cdot \rangle$ is non-degenerate. 
 Because $\langle \cdot,\cdot \rangle$ is non-degenerate there is a unique linear map $\mu:R \to R$ such that $\langle a,b\rangle = \langle b,\mu(a)\rangle$ for all $a,b \in R$. The map $\mu$ is called a {\sf Nakayama automorphism} of $R$.  Because 
$\langle a,bc\rangle = \langle ab,c\rangle$,  $\mu$ is an algebra automorphism. 
Of course, $\mu$ depends on the choice of isomorphism $\a$.

\subsection{}
\label{ssect.nak.autom1.5}
A finite dimensional $\NN$-graded $k$-algebra $A$ is a {\sf graded Frobenius algebra} if there is an integer $\ell$ such that 
$A \cong A^*(-\ell)$ as graded left $A$-modules where $(A^*)_i=(A_{-i})^*$. 
Let $A$ be a connected graded Frobenius algebra and let $A_\ell$ be its top-degree non-zero component.
Then $\dim_k(A_\ell)=1$. Let $\sfv$ be a basis for $A_\ell$. 
There is, up to non-zero scalar multiples, a unique isomorphism $A \to A^*(-\ell)$ of graded left $A$-modules and hence a unique (up to non-zero scalar multiples) non-degenerate degree-preserving bilinear map $\langle \cdot,\cdot \rangle:A \times A \to k(-\ell)$ such that 
$\langle ab,c\rangle =\langle a,bc\rangle$ for all $a,b,c \in A$, namely
$$
\langle x,y \rangle = \begin{cases} 
                                       	\l & \text{if $x \in A_i$ and $y \in A_{\ell-i}$ and $xy=\l\sfv$}
					\\
					0 & \text{otherwise.}
				\end{cases}
$$
Although $\langle \cdot,\cdot \rangle$ depends on the choice of $\sfv$, the linear map $\mu:A \to A$ such that $\langle x,y \rangle
= \langle y,\mu(x) \rangle$ for all $x,y \in A$ does not. We call $\mu$ {\it the} 
{\sf Nakayama automorphism} of $A$.
  		
Since $\langle 1,\sfv \rangle = \langle \sfv,1\rangle$, $\mu(\sfv)=\sfv$. 

Let $x$ be a non-zero element in $A_1$. Then there is $y \in A_{\ell-1}$ such that $\langle x,y \rangle =1$, i.e., $xy=\sfv$. But 	
$\langle x,y \rangle	= \langle y,\mu(x) \rangle$ so $\sfv=xy=y\mu(x)$

\begin{prop}
\label{prop.mu.central}
Let $A$ be a connected graded Frobenius algebra. Its Nakayama automorphism belongs to the center of $\Aut(A)$.
\end{prop}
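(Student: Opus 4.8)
The plan is to show that $\mu$ commutes with every $\sigma\in\Aut(A)$, where $\Aut(A)$ denotes the group of graded $k$-algebra automorphisms of $A$. First I would record that $\mu$ itself lies in $\Aut(A)$, so that the assertion even makes sense: $\mu$ is an algebra automorphism by the computation in \S\ref{ssect.nak.autom1} (it uses only $\langle ab,c\rangle=\langle a,bc\rangle$ and non-degeneracy), and it is graded because the form is degree-preserving, so $\langle x,y\rangle\ne 0$ with $x\in A_i$ forces $y\in A_{\ell-i}$ and then $\mu(x)\in A_i$; hence $\mu(A_i)=A_i$.

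The key step is to understand how an arbitrary $\sigma\in\Aut(A)$ interacts with the Frobenius form. Since $\sigma$ is graded it preserves the one-dimensional top component $A_\ell$, so there is a scalar $c\in k^\times$ with $\sigma(\sfv)=c\,\sfv$. I would then consider the twisted bilinear form $\langle x,y\rangle':=\langle \sigma(x),\sigma(y)\rangle$. A direct check shows that $\langle\cdot,\cdot\rangle'$ is again degree-preserving, non-degenerate (because $\sigma$ is bijective), and associative in the sense $\langle ab,c\rangle'=\langle a,bc\rangle'$ (this is where $\sigma$ being a multiplicative map is used). By the uniqueness up to a non-zero scalar of such a form established in \S\ref{ssect.nak.autom1.5}, we get $\langle\cdot,\cdot\rangle'=\lambda\langle\cdot,\cdot\rangle$ for some $\lambda\in k^\times$; evaluating on the pair $(1,\sfv)$ and using $\sigma(\sfv)=c\,\sfv$ together with $\langle 1,\sfv\rangle=\langle\sfv,1\rangle$ identifies $\lambda=c$. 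Thus
\[
\langle \sigma(x),\sigma(y)\rangle \;=\; c\,\langle x,y\rangle \qquad\text{for all } x,y\in A.
\]

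With this scaling identity in hand the conclusion is a short computation. Applying the Nakayama defining property $\langle a,b\rangle=\langle b,\mu(a)\rangle$ to the pair $(\sigma(x),\sigma(y))$ gives $\langle \sigma(x),\sigma(y)\rangle=\langle \sigma(y),\mu(\sigma(x))\rangle$, while the scaling identity (applied twice) gives $\langle \sigma(x),\sigma(y)\rangle=c\,\langle x,y\rangle=c\,\langle y,\mu(x)\rangle=\langle \sigma(y),\sigma(\mu(x))\rangle$. Equating the two expressions yields $\langle \sigma(y),\mu(\sigma(x))\rangle=\langle \sigma(y),\sigma(\mu(x))\rangle$ for all $y$; since $\sigma$ is surjective, $\sigma(y)$ ranges over all of $A$, and non-degeneracy of $\langle\cdot,\cdot\rangle$ forces $\mu(\sigma(x))=\sigma(\mu(x))$ for all $x$. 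Hence $\mu\sigma=\sigma\mu$ for every $\sigma\in\Aut(A)$, so $\mu$ is central.

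The only substantive input is the uniqueness-up-to-scalar statement from \S\ref{ssect.nak.autom1.5}; everything else is bookkeeping. Accordingly, the step demanding the most care is verifying that the twisted form $\langle\cdot,\cdot\rangle'$ really satisfies all three hypotheses (degree-preservation, non-degeneracy, and associativity) so that the uniqueness result applies, and then pinning down the scalar as $\lambda=c$ via the normalization $\langle 1,\sfv\rangle=\langle\sfv,1\rangle$. I do not expect any genuine obstacle beyond this.
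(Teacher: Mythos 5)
Your proof is correct; the one substantive input you invoke --- the uniqueness up to scalar, from \S\ref{ssect.nak.autom1.5}, of the degree-preserving non-degenerate form satisfying $\langle ab,c\rangle=\langle a,bc\rangle$ --- is available, and your verification that the twisted form $\langle\cdot,\cdot\rangle'$ inherits all three hypotheses is exactly what is needed to apply it. The route does differ from the paper's in one respect: the paper never introduces the equivariance of the form at all. It works with the multiplicative identity $xy=y\mu(x)$ (valid for $x\in A_i$, $y\in A_{n-i}$, both sides lying in the one-dimensional top component) and simply applies the algebra map $\s$ to it, obtaining $\s(y)\mu\s(x)=\s(x)\s(y)=\s(y)\s\mu(x)$ and concluding by non-degeneracy of the pairing $A_{n-i}\times A_i\to k$; the scalar $c$ with $\s(\sfv)=c\,\sfv$ never appears, and no uniqueness statement is needed. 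Your argument instead first establishes the scaling identity $\langle\s(x),\s(y)\rangle=c\,\langle x,y\rangle$ and then runs essentially the same non-degeneracy argument. This costs slightly more machinery, but it buys an explicit equivariance statement of independent interest: it is precisely the abstract Frobenius analogue of Lemma \ref{lem.ndxe}, where the same identity is proved for $E=\Ext^*_S(k,k)$ with $c=\hdet(\s)$. Two trivial remarks: pinning down $\lambda=c$ is unnecessary, since in your final computation the scalar cancels whatever its value; and the normalization step really uses $\langle 1,\sfv\rangle=1\neq 0$ (because $1\cdot\sfv=\sfv$), not the symmetry $\langle 1,\sfv\rangle=\langle\sfv,1\rangle$, which plays no role here.
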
 
\begin{pf}
The proof uses some of the observations in \cite[\S3]{S}. 

Let $\mu$ be the Nakayama automorphism and $\s \in \Aut(A)$. 
Let $A_n$ be the top-degree component of $A$ and $\sfv$ a basis for $A_n$. The bilinear form $\langle \cdot,\cdot \rangle$
on $A$ is, up to non-zero scalar multiple, given by the formula  $\langle x,y \rangle \sfv = xy$ for $x \in A_i$ and $y \in A_{n-i}$. 

Let $x \in A_i$ and $y \in A_{n-i}$. Since $xy=y\mu(x)$, 
$$
\s(y)\mu\s(x)=\s(x)\s(y)=\s(xy)=\s(y\mu(x))=\s(y)\s\mu(x),
$$
$\langle \s(y),\mu\s(x) \rangle=\langle\s(y),\s\mu(x) \rangle$. Since  $\langle \cdot,\cdot \rangle:A_{n-i}\times A_i\to k$ is non-degenerate, $\mu\s(x)=\s\mu(x)$.
Thus $\mu\s=\s\mu$. 
\end{pf}
				
\subsection{}
\label{ssect.nak.autom2} 
By \cite [Cor. 5.12]{BM}, $E$ is a graded Frobenius algebra.\footnote{This is a consequence of a more general
result in \cite{LPWZ} for $A_\infty$-algebras but the proof in \cite{BM} is more elementary.}

After the discussion in \S\S\ref{sse.cE} and \ref{ssect.nak.autom1},
the Frobenius pairing $\<\cdot,\cdot \>:E\times E\to k$ is given by the formula  
$$
\<\xi , \eta\> \; = \; (-1)^{i(d-i)}(\xi\eta)(\sfw) \qquad \hbox{for $\xi \in E^i$ and $\eta \in E^{d-i}$}
$$
and the Nakayama automorphism is the unique linear map $\mu:E \to E$ such that 
\begin{equation}
\label{eq.BM}
\<\xi, \eta\>=\<\eta, \mu(\xi)\>
\end{equation}
for all $\xi\in E^i$ and $\eta\in E^{d-i}$ and all $i$  (cf. \cite[p.91]{BM} which uses the notation $(\cdot,\cdot)$ 
where we use $\<\cdot,\cdot \>$, and they use  $\<\cdot,\cdot \>$ to denote the pairing between a vector space and its linear dual).

\begin{lemma} \label{lem.ndxe} 
 We adopt the notation and hypotheses in \S\ref{ssect.notn1}. If $\s\in \Aut( S)$, then 
 $$
\<\s^!(\xi), \s^!(\eta)\> \; =\; \hdet (\s)\<\xi, \eta\>
$$
for all $\xi\in E^i$, all $\eta\in E^{d-i}$, and all $i=0,\ldots,d$. 
\end{lemma}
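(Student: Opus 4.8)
The plan is to exploit two facts that are already in hand: first, that $\s^!$ is a \emph{graded algebra automorphism} of $E=\Ext^*_S(k,k)$ (this is recorded in \S\ref{ssect.automs}, where it is observed that any graded algebra automorphism of $S^!$ restricts to one of $\cE\cong E$); and second, Proposition \ref{prop.WZ}, which tells us that $\s^!$ acts on the top component $E^d$ as the single scalar $\hdet(\s)$, i.e.\ $\s^!(u)=\hdet(\s)\,u$ for every $u\in E^d$. The whole lemma then falls out by pulling $\s^!$ through the product in the Frobenius pairing and landing in $E^d$, where the scalar $\hdet(\s)$ can be extracted. So this is really a short bookkeeping argument rather than a new computation.

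Concretely, I would start from the definition of the pairing in \S\ref{ssect.nak.autom2}: for $\xi\in E^i$ and $\eta\in E^{d-i}$ we have $\langle \xi,\eta\rangle=(-1)^{i(d-i)}(\xi\eta)(\sfw)$. Since $\s^!$ preserves the bigrading of $E$, we have $\s^!(\xi)\in E^i$ and $\s^!(\eta)\in E^{d-i}$, so the \emph{same} sign $(-1)^{i(d-i)}$ governs $\langle \s^!(\xi),\s^!(\eta)\rangle$. Next, because $\s^!$ is an algebra automorphism of $E$, it commutes with the product: $\s^!(\xi)\,\s^!(\eta)=\s^!(\xi\eta)$. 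The element $\xi\eta$ lies in $E^d$, so Proposition \ref{prop.WZ} applies and gives $\s^!(\xi\eta)=\hdet(\s)\,\xi\eta$. Evaluating on $\sfw$ and collecting the sign, this chain reads $\langle \s^!(\xi),\s^!(\eta)\rangle=(-1)^{i(d-i)}\bigl(\s^!(\xi\eta)\bigr)(\sfw)=(-1)^{i(d-i)}\hdet(\s)\,(\xi\eta)(\sfw)=\hdet(\s)\,\langle \xi,\eta\rangle$, which is exactly the claim.

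The only points that need care, and which I would check explicitly, are the two compatibilities that make the reduction legitimate: that $\s^!$ is degree-preserving on $E$ (so that the prefactors $(-1)^{i(d-i)}$ on the two sides genuinely coincide), and that the product appearing in the pairing is the one for which $\s^!$ is multiplicative. Both hold because $\s^!$ comes from a graded algebra automorphism of $S^!$ and the multiplication $*$ on $\cE$ is built from that of $S^!$ with signs depending only on the (preserved) homological degrees; hence $\s^!$ respects $*$ and fixes each $E^i$. I expect this degree-and-product compatibility to be the only genuine obstacle — once it is nailed down, the identity is immediate from Proposition \ref{prop.WZ}, and no direct manipulation of $\sfw$ or of the $m$-Koszul complex is required. (As an alternative to invoking Proposition \ref{prop.WZ}, one could instead identify $E^d\cong W_\ell^*$ and use $(\s^!(u))(\sfw)=u(\s^{\otimes\ell}(\sfw))=\hdet(\s)\,u(\sfw)$ via Theorem \ref{thm.msss}; this is essentially the content of Proposition \ref{prop.WZ} unwound, so I would prefer the shorter route above.)
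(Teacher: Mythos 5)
Your proposal is correct and matches the paper's own proof: the paper makes the identical three-step reduction (same sign $(-1)^{i(d-i)}$ on both sides, multiplicativity $\s^!(\xi)\s^!(\eta)=\s^!(\xi\eta)$, then scalar action on the top degree), phrasing the last step as $(\s^!(\xi\eta))(\sfw)=(\xi\eta)(\s^{\otimes\ell}(\sfw))=\hdet(\s)(\xi\eta)(\sfw)$ via Theorem \ref{thm.msss} — exactly the ``unwound'' alternative you mention, and since Theorem \ref{thm.msss} is itself deduced from Proposition \ref{prop.WZ}, your direct citation of Proposition \ref{prop.WZ} is the same argument, not a different route.
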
 

\begin{proof}  
The calculation
\begin{align*}
\<\s^!(\xi), \s^!(\eta)\> 
& =(-1)^{i(d-i)}(\s^!(\xi)\s^!(\eta))(\sfw)
\\
& =(-1)^{i(d-i)}(\s^!(\xi\eta))(\sfw) \\
& =(-1)^{i(d-i)}(\xi \eta)(\s^{\otimes \ell}(\sfw))
\\
&=(-1)^{i(d-i)}(\xi\eta)(\hdet (\s)\sfw) \qquad \hbox{(by Theorem \ref{thm.msss})}
\\
& =\hdet (\s)(-1)^{i(d-i)}(\xi \eta)(\sfw)
\\
&=\hdet (\s)\<\xi, \eta\>
\end{align*}
proves the lemma.
\end{proof}

The next result follows from \cite [Thm. 6.3]{BM}.

\begin{lemma}
 \label{lem.BM} 
 With the notation in \S\ref{ssect.notn1},  $\mu|_{V^*}=(-1)^{d+1}\nu^*$. 
\end{lemma}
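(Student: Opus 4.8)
The plan is to read off $\mu|_{V^*}$ directly from its defining equation \eqref{eq.BM}, by specializing the Frobenius pairing to $E^1\times E^{d-1}$ and converting the resulting identity into a statement about one-slot contractions of $\sfw$.

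First I would restrict \eqref{eq.BM} to $\xi\in E^1=V^*$ and $\eta\in E^{d-1}$. Since $\mu$ preserves the cohomological degree, $\mu(\xi)\in E^1=V^*$, so $\mu|_{V^*}$ is a well-defined element of $\GL(V^*)$. In the pairing $\langle\xi,\eta\rangle=(-1)^{i(d-i)}(\xi\eta)(\sfw)$ the prefactor attached to $\langle\xi,\eta\rangle$ and to $\langle\eta,\mu(\xi)\rangle$ is the same, namely $(-1)^{d-1}$. The signs distinguishing the product on $E$ from the product on $S^!$ also agree on the two sides: for $m\ge 3$, $d$ is odd so $d-1$ is even and no sign is introduced, while for $m=2$ the factors $(-1)^{1\cdot(d-1)}$ and $(-1)^{(d-1)\cdot 1}$ coincide. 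Hence \eqref{eq.BM} collapses to the clean identity $(\xi\eta)(\sfw)=(\eta\,\mu(\xi))(\sfw)$, with both products now taken in $S^!$.

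Next I would interpret both sides through Lemma \ref{lem.Wi}. The multiplication $S^!_1\otimes S^!_{\ell-1}\to S^!_\ell$ is dual to the inclusion $W_\ell\subseteq V\otimes W_{\ell-1}$, and $S^!_\ell\cong W_\ell^*=(k\sfw)^*$. Writing $\ell_\psi:=(\psi\otimes\id^{\otimes\ell-1})(\sfw)$ and $r_\psi:=(\id^{\otimes\ell-1}\otimes\psi)(\sfw)$ for $\psi\in V^*$, the identity above reads $\eta(\ell_\xi)=\eta(r_{\mu(\xi)})$ for all $\eta\in W_{\ell-1}^*=S^!_{\ell-1}$, whence $\ell_\xi=r_{\mu(\xi)}$ in $W_{\ell-1}$ for every $\xi$. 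Now I bring in that $\sfw$ is a twisted superpotential: by Theorem \ref{thm.DV} and Proposition \ref{prop.sdw} there is $\tau\in\GL(V)$ with $(\tau\otimes\id^{\otimes\ell-1})\phi(\sfw)=\sfw$, and a one-line slot-chase turns this into $\ell_\psi=r_{\tau^*\psi}$ for all $\psi$; thus $r_{\mu(\xi)}=r_{\tau^*(\xi)}$. Finally $r$ is injective: by the relation just noted $r$ and $\ell$ have the same image, equal to $\partial(k\sfw)=W_{\ell-1}$, of dimension $\dim_k S^!_{\ell-1}=\dim_k S^!_1=\dim_k V$ by the Poincaré duality of the Frobenius algebra $S^!$. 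Therefore $\mu(\xi)=\tau^*(\xi)$, i.e. $\mu|_{V^*}=\tau^*$.

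What remains — and this is the only genuinely hard point — is to identify the cyclic twist $\tau$ of $\sfw$ with $(-1)^{d+1}\nu$, after which $\mu|_{V^*}=\tau^*=(-1)^{d+1}\nu^*$, as claimed. This is the step where the twisted Calabi–Yau automorphism $\nu$ of $S$, defined through $\Ext^d_{S^e}(S,S^e)\cong{}_{\nu}S_1(\ell)$, must be matched against the purely combinatorial symmetry of $\sfw$. I would obtain it from \cite[Thm. 6.3]{BM}: computing $\Ext^d_{S^e}(S,S^e)$ from the self-dual $m$-Koszul bimodule resolution of $S$, whose top term is $S\otimes k\sfw\otimes S$, produces precisely the twist $\tau$, while the factor $(-1)^{d+1}$ is the sign with which that resolution is identified with its own dual. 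The main obstacle is thus not the reduction above but this last identification, together with the bookkeeping needed to reconcile the Nakayama-automorphism sign conventions of \cite{BM} and \cite{RRZ} with the one fixed in \S\ref{ssect.CY}.
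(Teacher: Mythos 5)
Your proposal is correct in substance, but it helps to see how it sits relative to the paper, whose entire proof of Lemma \ref{lem.BM} is a bare citation: ``The next result follows from \cite[Thm. 6.3]{BM}.'' What you have done is factor that citation into two pieces. Your first step --- the Frobenius-pairing computation showing $\mu|_{V^*}=\tau^*$, where $\tau\in\GL(V)$ is the twist making $\sfw$ a twisted superpotential --- is self-contained and correct, and it is, run in reverse, exactly the computation the paper performs later to prove Theorem \ref{thm.mnw} \emph{from} Lemma \ref{lem.BM} (your $\ell_\xi=r_{\mu(\xi)}$ identity is their chain $(\xi\eta)(\sfw)=(\eta\mu(\xi))(\sfw)=(\mu(\xi)\eta)(\phi(\sfw))$ in contraction language). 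Your second step, the identification $\tau=(-1)^{d+1}\nu$, is precisely the statement of Theorem \ref{thm.mnw}, so you were right not to invoke that theorem (inside the paper's architecture that would be circular, since its proof uses Lemma \ref{lem.BM}) and to propose instead the computation of $\Ext^d_{S^e}(S,S^e)$ from the bimodule $m$-Koszul resolution with top term $S\otimes k\sfw\otimes S$; but that computation \emph{is} the content of \cite[Thm. 6.3]{BM}, so your proof ultimately rests on the same external input as the paper's, merely attached at a different joint --- what it buys is that the Frobenius-side bookkeeping becomes explicit and only the bimodule-side identification is outsourced. Two repairable wrinkles: (i) $S^!$ is not a Frobenius algebra when $m\ge 3$ --- only the subalgebra $\cE\cong E$ is, by \cite[Cor. 5.12]{BM} --- so the injectivity of $\psi\mapsto r_\psi$ should be deduced from nondegeneracy of the Frobenius pairing $E^1\times E^{d-1}\to k$ (if $\ell_\xi=0$ then $\<\xi,\eta\>=(-1)^{d-1}\eta(\ell_\xi)=0$ for all $\eta\in E^{d-1}$, forcing $\xi=0$, and then $r=\ell\circ(\tau^*)^{-1}$ is injective too), not from ``Poincar\'e duality of $S^!$''; (ii) as written, your justification that the common image equals $\partial(k\sfw)=W_{\ell-1}$ presupposes the injectivity you are trying to establish --- the nondegeneracy argument in (i) removes this circularity, after which the equality follows from the dimension count $\dim_k S^!_{\ell-1}=\dim_k V$ in the paper's description of the minimal resolution.
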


\begin{theorem} \label{thm.mnw} 
We adopt the notation in \S\ref{ssect.notn1}. 
The Nakayama automorphism of $S$ is the unique $\nu \in \GL(V)$ such that
$$
(\nu \otimes \id^{\otimes \ell-1})\phi (\sfw)=(-1)^{d+1}\sfw.
$$
In particular, $\sfw$ is a $(-1)^{d+1}\nu$-twisted superpotential.  
\end{theorem}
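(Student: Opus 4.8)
The plan is to transport the defining equation of the Nakayama automorphism $\mu$ of $E=\Ext^*_S(k,k)$ over to an equation about $\sfw$, and then read off the claim using Lemma \ref{lem.BM}, which gives $\mu|_{V^*}=(-1)^{d+1}\nu^*$. Recall that $\mu$ is characterized by $\<\xi,\eta\>=\<\eta,\mu(\xi)\>$ with $\<\xi,\eta\>=(-1)^{i(d-i)}(\xi\eta)(\sfw)$ for $\xi\in E^i$, $\eta\in E^{d-i}$. First I would specialize to $i=1$, so that $\xi\in E^1=V^*$ and $\eta\in E^{d-1}$. The prefactors $(-1)^{1\cdot(d-1)}$ on the two sides of $\<\xi,\eta\>=\<\eta,\mu(\xi)\>$ agree and cancel, leaving
$$
(\xi\eta)(\sfw)=(\eta\,\mu(\xi))(\sfw),
$$
where the products are taken in $\cE$.

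The key step is to recognize these two products as contractions of $\sfw$ against different legs. Under $\cE^i=A^!_{\rho(i)}$ one has $\rho(1)=1$ and $\rho(d-1)=\ell-1$, so by Lemma \ref{lem.Wi} and Proposition \ref{prop.w}, $E^1=A^!_1=V^*$, $E^{d-1}\cong W_{\ell-1}^*$, and $E^d\cong W_\ell^*=(k\sfw)^*$. After absorbing the signs from (\ref{mult.on.cE}) (which again match on the two sides, and never trigger the vanishing case since $d-1$ is even when $m>2$), the product $E^1\cdot E^{d-1}\to E^d$ is the transpose of the inclusion $W_\ell\hookrightarrow V\otimes W_{\ell-1}$, while $E^{d-1}\cdot E^1\to E^d$ is the transpose of $W_\ell\hookrightarrow W_{\ell-1}\otimes V$. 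Evaluating on $\sfw$ gives $(\xi\eta)(\sfw)=(\xi\otimes\eta)(\sfw)$, with $\xi$ reading off the first leg and $\eta$ the last $\ell-1$, and $(\eta\,\mu(\xi))(\sfw)=(\eta\otimes\mu(\xi))(\sfw)$, with $\mu(\xi)$ reading off the last leg and $\eta$ the first $\ell-1$. Hence $(\xi\otimes\eta)(\sfw)=(\eta\otimes\mu(\xi))(\sfw)$ for all $\xi\in V^*$, $\eta\in W_{\ell-1}^*$.

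Next I would substitute $\mu(\xi)=(-1)^{d+1}\nu^*\xi=(-1)^{d+1}\xi\circ\nu$ from Lemma \ref{lem.BM}. Since contracting the last leg of $\sfw$ by $\mu(\xi)$ and the first $\ell-1$ legs by $\eta$ is exactly $(-1)^{d+1}$ times contracting $(\nu\otimes\id^{\otimes\ell-1})\phi(\sfw)$ by $\xi\otimes\eta$ — the map $\phi$ of (\ref{defn.phi}) moves the last leg to the front, where $\nu$ acts and $\xi$ then reads it off — the identity becomes
$$
(\xi\otimes\eta)(\sfw)=(-1)^{d+1}(\xi\otimes\eta)\big((\nu\otimes\id^{\otimes\ell-1})\phi(\sfw)\big)
$$
for all $\xi\in V^*$, $\eta\in W_{\ell-1}^*$. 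Both $\sfw$ and $(\nu\otimes\id^{\otimes\ell-1})\phi(\sfw)$ lie in $V\otimes W_{\ell-1}$ (using $W_\ell\subseteq V\otimes W_{\ell-1}$ and $W_\ell\subseteq W_{\ell-1}\otimes V$), and the functionals $\xi\otimes\eta$ span $(V\otimes W_{\ell-1})^*$; since $(-1)^{d+1}$ is its own inverse, separation of points yields $(\nu\otimes\id^{\otimes\ell-1})\phi(\sfw)=(-1)^{d+1}\sfw$. The ``in particular'' then follows from Definition \ref{de.tw.spp}(2), as $((-1)^{d+1}\nu\otimes\id^{\otimes\ell-1})\phi(\sfw)=\sfw$.

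For uniqueness I would check that $\theta\mapsto(\theta\otimes\id^{\otimes\ell-1})\phi(\sfw)$ is injective on $\End(V)$: writing $\phi(\sfw)=\sum_k s_k\otimes\mathbf{u}_k$ with $\mathbf{u}_k\in V^{\otimes\ell-1}$ linearly independent, the vanishing forces $\theta(s_k)=0$, and the $s_k$ span $V$ because the last legs of $\sfw$ span $V$. This non-degeneracy follows from $\pd(k\sfw)=W_{\ell-1}$ having dimension $\dim_k S^!_{\ell-1}=\dim V$, so the first-leg contraction $\xi\mapsto(\xi\otimes\id^{\otimes\ell-1})\sfw$ is injective, and the just-established twisted-superpotential relation transfers fullness to the last leg. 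The main obstacle, and the step deserving the most care, is the second paragraph: matching the two orders of the Yoneda product with contraction of the first versus the last leg of $\sfw$ — this asymmetry is exactly what manufactures the cyclic shift $\phi$ — together with the sign bookkeeping in (\ref{mult.on.cE}) and in the pairing.
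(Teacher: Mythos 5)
Your derivation of the main identity is essentially the paper's own: both specialize the Frobenius relation $\<\xi,\eta\>=\<\eta,\mu(\xi)\>$ to $\xi\in E^1$, $\eta\in E^{d-1}$, evaluate the products on $\sfw$ so that moving the degree-one factor across the product manufactures the cyclic shift $\phi$, and then substitute $\mu|_{V^*}=(-1)^{d+1}\nu^*$ from Lemma \ref{lem.BM}. (You are in fact more careful than the paper at one point: you note explicitly that $\sfw$ and $(\nu\otimes\id^{\otimes \ell-1})\phi(\sfw)$ both lie in $V\otimes W_{\ell-1}$ and that the functionals $\xi\otimes\eta$ span $(V\otimes W_{\ell-1})^*$, a point the paper leaves implicit.) Where you genuinely diverge is uniqueness. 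The paper argues in reverse: given $\nu'\in\Aut(S)$ satisfying the equation, it sets $\mu':=(-1)^{d+1}(\nu')^*$, reruns the computation to show $\mu'$ satisfies the defining property of the Nakayama automorphism of $E$, and concludes $\mu'=\mu|_{V^*}$ by non-degeneracy of the pairing — note that this converse, as written, only treats $\nu'\in\Aut(S)$. Your argument instead proves injectivity of $\theta\mapsto(\theta\otimes\id^{\otimes\ell-1})\phi(\sfw)$ on all of $\End(V)$ via a leg-spanning argument, which delivers uniqueness in $\GL(V)$ exactly as the theorem asserts; that is a modest strengthening. One small caveat: your assertion $\pd(k\sfw)=W_{\ell-1}$ is stated rather than proved, and dimension-counting alone ($\dim W_{\ell-1}=\dim S^!_{\ell-1}=\dim V$) does not give injectivity of the first-leg contraction $V^*\to W_{\ell-1}$ without first knowing surjectivity. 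The clean justification is non-degeneracy of the Frobenius pairing $E^1\times E^{d-1}\to E^d$, i.e., the fact that $E$ is graded Frobenius (\cite[Cor. 5.12]{BM}) — the same fact the paper's own uniqueness step silently uses — so you should cite it there; with that repair your proof is complete.
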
 

\begin{proof} 
Let $\nu$ be the Nakayama automorphism of $S$. 
If  $\xi\in E^1$ and $\eta\in E^{d-1}$,  then 
\begin{align*}
(\xi \eta)(\sfw)
& =(-1)^{d-1}\<\xi, \eta\>
\\
&=(-1)^{d-1}\<\eta, \mu(\xi)\> \\
& =(\eta\mu(\xi))(\sfw)
\\
& =(\mu(\xi)\eta)(\phi (\sfw))  
\\
& =((\mu \otimes \id^{\otimes \ell-1})(\xi\eta))(\phi (\sfw)).
\end{align*}
By Lemma \ref{lem.BM}, $(\mu|_{V^*})^*=(-1)^{d+1}\nu$ so 
$$
(\mu \otimes \id^{\otimes \ell-1})(\xi\eta) = (\xi\eta)\circ ((-1)^{d+1}\nu \otimes \id^{\otimes \ell-1}).
$$
Therefore $(\nu \otimes \id^{\otimes \ell-1})(\phi (\sfw))=(-1)^{d+1}\sfw$.   

Conversely, suppose that $\nu' \in \Aut(S)\subset \GL(V)$ satisfies $(\nu' \otimes \id^{\otimes \ell-1})\phi (\sfw)=(-1)^{d+1}\sfw$.  
Let $\mu':=(-1)^{d+1}(\nu')^*$.  If $\xi\in E^1$ and $\eta\in E^{d-1}$, then
\begin{align*}
\<\eta, \mu'(\xi)\> 
& =(-1)^{d-1}(\eta\mu'(\xi))(\sfw)
\\
& =(-1)^{d-1}(\mu'(\xi)\eta)(\phi (\sfw))
 \\
& =(-1)^{d-1}((\mu'\otimes \id^{\otimes \ell-1})(\xi\eta))(\phi (\sfw)) \\
& =(-1)^{d-1}(\xi\eta)((-1)^{d+1}\nu' \otimes \id^{\otimes \ell-1})(\phi (\sfw)) \\
& =(-1)^{d-1}(\xi \eta)(\sfw)
\\
&=\<\xi, \eta\>.
\end{align*}
But $\langle \xi,\eta\rangle = \langle \eta,\mu(\xi)\rangle$ so $\mu'=\mu|_{V^*}$. However, by Lemma \ref{lem.BM}, $\mu|_{V^*}=(-1)^{d+1}\nu^*$ 
so 
$$
 (-1)^{d+1}(\nu')^* = \mu' = \mu|_{V^*}=(-1)^{d+1}\nu^*.
 $$
Thus, as elements of $\GL(V)$, and hence as elements in $\Aut(S)$, $\nu'=\nu$.  
\end{proof} 

\begin{corollary}
 \label{cor.sp} 
With the notation in \S\ref{ssect.notn1}, $S$ is Calabi-Yau if and only if $\phi (\sfw)=(-1)^{d+1}\sfw$. 
\end{corollary}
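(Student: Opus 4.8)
The plan is to read the statement off directly from Theorem \ref{thm.mnw}, which characterizes the Nakayama automorphism $\nu$ as the unique element of $\GL(V)$ satisfying $(\nu \otimes \id^{\otimes \ell-1})\phi(\sfw) = (-1)^{d+1}\sfw$. The only conceptual ingredient is the translation of definitions: by \S\ref{ssect.CY}, $S$ is Calabi-Yau precisely when its Nakayama automorphism equals $\id_S$, and under the convention in \S\ref{ssect.notn1}(4) this means exactly that $\nu = \id_V$ as an element of $\GL(V)$.

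For the forward direction I would assume $S$ is Calabi-Yau, so that $\nu = \id_V$. Substituting this into the defining equation of Theorem \ref{thm.mnw} and using $\id_V \otimes \id^{\otimes \ell-1} = \id^{\otimes \ell}$, the left-hand side collapses to $\phi(\sfw)$, giving $\phi(\sfw) = (-1)^{d+1}\sfw$ immediately. For the converse I would suppose $\phi(\sfw) = (-1)^{d+1}\sfw$ and note that $\id_V$ then satisfies $(\id_V \otimes \id^{\otimes \ell-1})\phi(\sfw) = \phi(\sfw) = (-1)^{d+1}\sfw$. Since Theorem \ref{thm.mnw} asserts that the Nakayama automorphism is the \emph{unique} element of $\GL(V)$ with this property, it follows that $\nu = \id_V$, i.e., $S$ is Calabi-Yau.

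Because the entire argument is just the specialization of Theorem \ref{thm.mnw} to $\nu = \id$, I do not expect any genuine obstacle. The only points requiring a moment's care are the bookkeeping identifying the algebra automorphism $\id_S$ with $\id_V$ under the convention of \S\ref{ssect.notn1}(4), and, in the converse direction, the explicit appeal to the uniqueness clause of Theorem \ref{thm.mnw} rather than merely its existence.
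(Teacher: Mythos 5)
Your proof is correct and matches the paper's (implicit) argument: the corollary appears with no separate proof precisely because it is the specialization of Theorem \ref{thm.mnw} to $\nu=\id$, with the converse resting on the uniqueness clause exactly as you say. Your care about identifying $\id_S$ with $\id_V$ via the convention of \S\ref{ssect.notn1}(4) is the only bookkeeping point, and you handle it correctly.
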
 

\begin{remark} \label{rem.sp} 
Let $S$ be an $m$-Koszul AS-regular algebra of Gorenstein parameter $\ell$.  For $m=2$, $S$ is Calabi-Yau if and only if $\phi (\sfw)=(-1)^{\ell+1}\sfw$ (a superpotential in the sense of \cite {BSW}) by \cite [Thm. 6.2]{BSW},  and, for $m>2$, $S$ is Calabi-Yau if and only if $\phi (\sfw)=\sfw$ (a superpotential in our sense) by \cite [Thm. 6.8]{BSW}. (If $m \ge 3$, then $d$ is odd.)
The above corollary combines these results and makes them less mysterious.  
(Note that one direction of the above corollary was essentially proved in \cite [Thm. 4.12]{WZ}.)  
\end{remark}

\begin{example}
Let $V$ be a vector space with basis $\{x,y\}$.  Let 
$$
\sfw \; = \;  x^2y^2+yx^2y+y^2x^2+xy^2x \; \in \; V^{\otimes 4}.
$$
The algebra
$$
S\; = \; \cD(\sfw, 1) \; = \; \frac{k\<x, y\>}{(xy^2+y^2x, x^2y+yx^2)}
$$
  is a 3-Koszul AS-regular algebra of dimension 3 with Gorenstein parameter 4.  By \cite [Example 1.6]{RRZ}, $S$ is Calabi-Yau.  Since $\phi (\sfw)=\sfw$, $\sfw$ is a superpotential in our sense, but not a superpotential in the sense of \cite{BSW}. 
\end{example}

\subsection{}
Let $S$ be an $m$-Koszul AS-regular algebra and $\s\in \Aut(S)$.  The next result gives a formula for the Nakayama 
automorphism $\mu$ of $E$ in terms of that for $S$, which we denote by $\nu$.  
When $m=2$, i.e., when $S$ is a Koszul algebra, then $E$ is generated in degree 1 so $\mu$ is determined by its restriction to $E_1=V^*$
so, by Lemma \ref{lem.BM} (\cite [Thm. 6.3]{BM}), $(-1)^{d+1}\nu^*$.  When $m\ge 3$, 
$E$ is not generated in degree 1, so $\mu$ is not determined by its restriction to $E_1$. Hence the need for the somewhat
awkward calculations in the next proof. 

\begin{theorem}
\label{thm.BM.nak} 
With the notation in \S\ref{ssect.notn1}, $\mu=(\e^{d+1}\nu)^!$. 
\end{theorem}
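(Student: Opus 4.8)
The plan is to show that the two grading-preserving automorphisms $\mu$ and $(\e^{d+1}\nu)^!$ of $E$ agree by testing them against the Frobenius pairing. First I would simplify the target: since $\e$ restricts to $-\id_V$ on $V=S_1$, in $\GL(V)$ we have $\e^{d+1}\nu=(-1)^{d+1}\nu=:\gamma$, and this lies in $\Aut(S)$ because scalars preserve $R$; thus $(\e^{d+1}\nu)^!=\gamma^!$ is a well-defined grading-preserving automorphism of $E=\cE$. Recall from \S\ref{ssect.nak.autom2} that $\mu$ is the unique linear map with $\langle\xi,\eta\rangle=\langle\eta,\mu(\xi)\rangle$ for $\xi\in E^i$, $\eta\in E^{d-i}$, and that the Frobenius pairing $\langle\cdot,\cdot\rangle\colon E^{d-i}\times E^i\to k$ is non-degenerate. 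Since $\gamma^!$ also preserves the grading, it suffices to prove
\[
\langle\xi,\eta\rangle = \langle\eta,\gamma^!(\xi)\rangle \qquad \text{for all } \xi\in E^i,\ \eta\in E^{d-i},\ \text{all } i.
\]

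Second I would reduce the pairing to a bare evaluation on $\sfw$. Writing $p=\rho(i)$, $q=\rho(d-i)$ and using $E^i\cong S^!_{\rho(i)}$, the form is $\langle\xi,\eta\rangle=(-1)^{i(d-i)}(\xi\eta)(\sfw)$, where the product is the $*$-product on $\cE$. A short sign check shows this equals $(\xi\cdot\eta)(\sfw)$ with $\cdot$ the genuine $S^!$-multiplication: for $m>2$ the integer $d$ is odd, so one of $i,d-i$ is even (the $*$-product does not vanish) and $(-1)^{i(d-i)}=1$; for $m=2$ one has $\xi*\eta=(-1)^{i(d-i)}\xi\cdot\eta$, so the two signs cancel. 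Since $\rho(i)+\rho(d-i)=\rho(d)=\ell$, lifting $\xi,\eta$ through the quotient $TV^*\to S^!$ (Lemma \ref{lem.Wi}) to $\widetilde\xi\in(V^*)^{\otimes p}$ and $\widetilde\eta\in(V^*)^{\otimes q}$ gives $\langle\xi,\eta\rangle=(\widetilde\xi\otimes\widetilde\eta)(\sfw)$ and likewise $\langle\eta,\gamma^!(\xi)\rangle=(\widetilde\eta\otimes(\gamma^*)^{\otimes p}\widetilde\xi)(\sfw)$, both evaluated on $\sfw\in V^{\otimes\ell}$ under the convention of \S\ref{VV*.convention}.

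Third I would feed in the twisted-superpotential identity. By Theorem \ref{thm.mnw}, $\sfw$ is a $\gamma$-twisted superpotential, i.e. $T(\sfw)=\sfw$ for $T:=(\gamma\otimes\id^{\otimes\ell-1})\phi$. Iterating, $T^p$ cyclically moves the last $p$ tensor factors to the front while applying $\gamma$ to each, so $T^p(\sfw)=\sfw$ reads: decomposing $\sfw\in V^{\otimes q}\otimes V^{\otimes p}$ as $\sfw=\sum_b c_b\otimes d_b$, one has $\sfw=\sum_b\gamma^{\otimes p}(d_b)\otimes c_b$ in $V^{\otimes p}\otimes V^{\otimes q}$. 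Substituting into $\langle\xi,\eta\rangle=(\widetilde\xi\otimes\widetilde\eta)(\sfw)$ yields $\sum_b\widetilde\xi(\gamma^{\otimes p}d_b)\,\widetilde\eta(c_b)$, which is exactly $\langle\eta,\gamma^!(\xi)\rangle=\sum_b\widetilde\eta(c_b)\,\widetilde\xi(\gamma^{\otimes p}d_b)$. The required identity follows, and uniqueness of $\mu$ gives $\mu=\gamma^!=(\e^{d+1}\nu)^!$. As a sanity check, at $i=1$ this recovers Lemma \ref{lem.BM}, since $\gamma^!|_{V^*}=(-1)^{d+1}\nu^*$.

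The main obstacle is the tensor-factor bookkeeping: correctly tracking which slots of $\sfw$ pair with $\widetilde\xi$ and which with $\widetilde\eta$ before and after the shift $T^p$, together with the placement of $\gamma$. The only genuinely delicate point is the sign reconciliation in the second step, where the Frobenius sign $(-1)^{i(d-i)}$ must be matched against the sign relating the $\cE$-product $*$ to the $S^!$-product $\cdot$; this forces the separate (but ultimately harmless) treatment of $m=2$ and $m\ge 3$, the latter relying on $d$ being odd. Verifying $\rho(i)+\rho(d-i)=\ell$ is a routine parity case check that guarantees $\widetilde\xi\otimes\widetilde\eta$ has length $\ell$ and so can be evaluated on $\sfw$.
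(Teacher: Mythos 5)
Your proof is correct and follows essentially the same route as the paper: both verify $\langle\xi,\eta\rangle=\langle\eta,(\e^{d+1}\nu)^!(\xi)\rangle$ by evaluating lifted tensors on $\sfw$ after a cyclic shift by $\rho(i)$ slots and iterating the twisted-superpotential identity of Theorem \ref{thm.mnw} — your identity $T^{p}(\sfw)=\sfw$ with $T=(\gamma\otimes\id^{\otimes\ell-1})\phi$ is exactly the paper's inductive formula $\phi^{j}(\sfw)=(-1)^{(d+1)j}\big((\nu^{-1})^{\otimes j}\otimes\id^{\otimes\ell-j}\big)(\sfw)$, followed by the same appeal to non-degeneracy and uniqueness of $\mu$. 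Your explicit reconciliation of the $*$-product on $\cE$ with the genuine $S^!$-product (and the check $\rho(i)+\rho(d-i)=\ell$) is a harmless sharpening of bookkeeping the paper leaves implicit, not a different method.
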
 
\begin{proof}  
Let $\xi \in E^i$ and $\eta\in E^{d-i}$. Then
\begin{align*}
(-1)^{i(d-i)}\big\langle  \eta, (\e^{d+1}\nu)^!(\xi)\big\rangle &  \;=\;    \big[\eta (\e^{d+1}\nu)^!(\xi)\big](\sfw) 
\\
&  \; = \; \big[(\e^{d+1}\nu)^!(\xi)\eta\big]\big(\phi ^{\rho (i)}(\sfw)\big)
\\
&  \; = \;  (-1)^{(d+1)\rho (i)} \big[ \nu^!(\xi) \eta\big]\big(\phi ^{\rho (i)}(\sfw)\big).
 \end{align*}
Repeated use of Theorem \ref{thm.mnw} and an induction argument on $j$ shows that 
$$
\phi ^j(\sfw)=(-1)^{(d+1)j}\Big((\nu^{-1})^{\otimes i}\otimes \id^{\otimes \ell-j}\Big)(\sfw)
$$
for $0\leq j\leq \ell$.  In particular, 
$$
\phi ^{\rho(i)}(\sfw)=(-1)^{(d+1)\rho(i)}\Big((\nu^{-1})^{\otimes \rho(i)}\otimes \id^{\otimes \ell-\rho(i)}\Big)(\sfw)
$$
so
\begin{align*}
(-1)^{i(d-i)}\big\langle  \eta, (\e^{d+1}\nu)^!(\xi)\big\rangle & \; = \;   (-1)^{(d+1)\rho (i)} \big[ \nu^!(\xi) \eta\big]\big(\phi ^{\rho (i)}(\sfw)\big)
\\
 & \; = \;     \big[ \nu^!(\xi) \eta\big]\bigg(\Big((\nu^{-1})^{\otimes \rho(i)}\otimes \id^{\otimes \ell-\rho(i)}\Big)(\sfw) \bigg)
\end{align*}
which equals
$$
 \bigg[\Big((\nu^*)^{\otimes \rho (i)}\otimes  \id^{\otimes \ell-\rho (i)}\Big)(\xi \eta)\bigg] \bigg(\Big((\nu^{-1})^{\otimes \rho(i)}\otimes \id^{\otimes \ell-\rho(i)}\Big)(\sfw) \bigg).
 $$
This, in turn, is equal to 
 $$
 (\xi \eta)
\Big(\big[\nu^{\otimes \rho (i)}\otimes \id^{\otimes \ell-\rho (i)}\big]
\big[(\nu ^{-1})^{\otimes \rho (i)}\otimes \id^{\otimes \ell-\rho (i)}\big](\sfw)\Big).
$$
But this is equal to  $(\xi \eta)(\sfw)$ which is $(-1)^{i(d-i)}\<\xi, \eta\>$. Thus, we have shown that
$$
(-1)^{i(d-i)}\big\langle  \eta, (\e^{d+1}\nu)^!(\xi)\big\rangle  \; = \; (-1)^{i(d-i)}\<\xi, \eta\>
$$
from which it follows that  $(\e^{d+1}\nu )^!$ is the Nakayama automorphism of $E$.   
\end{proof} 

A more general case of the following theorem was proved in \cite [Thm.4.2]{LMZ} using Hochschild cohomology 
(cf. \cite [Thm.3.11]{RRZ} for the noetherian case).  The following is a different proof using Proposition \ref{prop.mu.central}.  

\begin{corollary} \label{cor.mnc}  
 The Nakayama automorphism of an $m$-Koszul AS-regular algebra $S$ belongs to the center of $\Aut(S)$.
\end{corollary}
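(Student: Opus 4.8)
The plan is to transport the centrality of the Nakayama automorphism of $E=\Ext^*_S(k,k)$ across the anti-isomorphism $\s\mapsto\s^!$ so as to deduce centrality of the Nakayama automorphism $\nu$ of $S$ itself. The two ingredients are already available: $E$ is a connected graded Frobenius algebra (it is graded Frobenius by \cite[Cor.~5.12]{BM}, as recorded in \S\ref{ssect.nak.autom2}, and connected since $E^0=\Hom_S(k,k)=k$), and Theorem~\ref{thm.BM.nak} identifies its Nakayama automorphism as $\mu=(\e^{d+1}\nu)^!$. Once these are granted the whole argument is a formal manipulation, which is exactly what makes this a different proof from the Hochschild-cohomology argument of \cite[Thm.~4.2]{LMZ}.

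First I would invoke Proposition~\ref{prop.mu.central} with $A=E$ to conclude that $\mu$ lies in the center of $\Aut(E)$. Next, recall from \S\ref{sse.m.Koszul} and \S\ref{ssect.automs} that $\s\mapsto\s^!$ is an anti-homomorphism $\Aut(S)\to\Aut(E)$, and that it is injective: its restriction to $E^1=V^*$ is $\s\mapsto\s^*$, which is already injective on $\GL(V)$. Now fix an arbitrary $\s\in\Aut(S)$. Since $\mu$ is central in $\Aut(E)$ it commutes with $\s^!$, that is $(\e^{d+1}\nu)^!\s^!=\s^!(\e^{d+1}\nu)^!$. Applying the anti-homomorphism law $(\a\b)^!=\b^!\a^!$ to each side rewrites this as $(\s\,\e^{d+1}\nu)^!=(\e^{d+1}\nu\,\s)^!$, and injectivity of $(\cdot)^!$ then yields the identity $\s\,\e^{d+1}\nu=\e^{d+1}\nu\,\s$ in $\Aut(S)\subseteq\GL(V)$.

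Finally, under the convention of \S\ref{ssect.notn1}(4) the automorphism $\e$ restricts to $-\id_V$ on $V=S_1$, so $\e^{d+1}=(-1)^{d+1}\id_V$ is a scalar and hence central in $\GL(V)$. Cancelling this invertible central factor from both sides of $\s\,\e^{d+1}\nu=\e^{d+1}\nu\,\s$ leaves $\s\nu=\nu\s$. As $\s$ was arbitrary, $\nu$ centralizes $\Aut(S)$, which is the assertion. I expect the only points needing genuine care to be the bookkeeping of the anti-homomorphism (keeping the composition order straight so that the two sides really do coincide after applying $(\cdot)^!$) and the explicit observation that $\e^{d+1}$ is central, which legitimizes the final cancellation; everything else is a direct appeal to Proposition~\ref{prop.mu.central} and Theorem~\ref{thm.BM.nak}.
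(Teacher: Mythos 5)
Your proposal is correct and is essentially the paper's own proof: both rest on Theorem~\ref{thm.BM.nak} ($\mu=(\e^{d+1}\nu)^!$) together with Proposition~\ref{prop.mu.central} applied to the Frobenius algebra $E$, and then transport commutativity back through the anti-homomorphism $\s\mapsto\s^!$. The only difference is that you make explicit two steps the paper leaves implicit --- the injectivity of $(\cdot)^!$ and the cancellation of the central scalar $\e^{d+1}=(-1)^{d+1}\id_V$ --- which is careful bookkeeping rather than a new route.
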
 

\begin{proof} 
Let $\s\in \Aut(S)$. Since $(\s\nu)^!=\nu^!\s^!$ and $\mu$ commutes with $\s^!$,
$$
(\e^{d+1}\s\nu)^! =(\e^{d+1}\nu)^!\s^! 
 = \mu \s^!  = \s^!\mu 
 =\s^!(\e^{d+1}\nu)^!=(\e^{d+1}\nu\s)^!.
$$
Hence $\s\nu =\nu \s$ for all $\s\in \Aut(S)$. 
\end{proof} 

The next lemma follows from \cite[Prop. 5.3]{BM}. 

\begin{lemma} \label{lem.dl}  
\label{lem.mek} 
Adopt the notation in \S\ref{ssect.notn1}. If $m=2$, then $S$ is Koszul in the ``classical'' sense and $\ell=d$. If $m \ge 3$, then $d$ is odd and $\ell=\frac{1}{2}m(d-1)+1$. 
\end{lemma}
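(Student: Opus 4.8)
The plan is to read off both $\ell$ and the parity of $d$ directly from the minimal free resolution of ${}_Sk$, using Berger's description of that resolution (Theorem \ref{thm.berger}) together with the dictionary $i \mapsto \rho(i)$ between homological degree and $S^!$-degree from \S\ref{sse.cE}. First I would note that, since $S$ is $m$-Koszul, its $m$-Koszul complex is exact, and because every differential is right multiplication by the positive-degree element $e$ or $e^{m-1}$, the complex is in fact the minimal free resolution of ${}_Sk$. Hence the projective in homological position $i$ is $P_{\rho(i)} = S \otimes (S^!_{\rho(i)})^*$; that is, $\rho(i)$ records the internal $S^!$-degree sitting in homological degree $i$. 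Since $\gldim S = d$, the resolution has length $d$, so its top term occupies homological position $d$. By the Proposition preceding Proposition~\ref{prop.w} (describing the top of the minimal resolution), that term is $P_\ell$, which yields the single identity I need, namely $\rho(d) = \ell$.

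Next I would split into the two cases and let the definition of $\rho$ do the work. When $m = 2$ one has $\rho(i) = i$ for every $i$, so $\ell = \rho(d) = d$, and $m$-Koszulity with $m=2$ is by definition classical Koszulity, as recorded in the introduction. When $m \ge 3$ the two differentials $\cdot e$ and $\cdot e^{m-1}$ are genuinely distinct, and the alternating pattern in Berger's complex shows that a differential of type $\cdot e$ emanates from homological position $i$ precisely when $i$ is odd. The top differential $P_\ell \to P_{\ell-1}$ is $\cdot e$ (again by the cited Proposition, or equivalently its Remark, since the top two $S^!$-degrees $\ell$ and $\ell-1$ differ by only $1$), so $d$ must be odd. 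Writing $d = 2j+1$ and substituting into $\rho(d) = \ell$ gives $\ell = jm + 1 = \frac{1}{2}m(d-1) + 1$, as claimed.

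The argument is largely bookkeeping once the resolution has been identified, and I do not expect a genuine obstacle so much as one delicate point requiring care: the dictionary between homological degree and internal $S^!$-degree, i.e.\ verifying that the term in position $i$ really is $P_{\rho(i)}$ and that $\cdot e$-type maps occur \emph{exactly} at odd positions. This is where the split between $m = 2$ and $m \ge 3$ matters, since for $m=2$ the equality $e = e^{m-1}$ erases all parity information (and correctly so, as there $\ell = d$ with no constraint on the parity of $d$), whereas for $m \ge 3$ the parity of $d$ must be extracted from tracking the alternation in Berger's complex rather than from a mere dimension count. Establishing $\rho(d) = \ell$ and reading off this parity are the two steps on which the whole proof turns.
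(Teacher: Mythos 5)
Your proof is correct, but it does not parallel an argument in the paper, because the paper offers none: the lemma is dispatched with the bare citation ``follows from \cite[Prop.~5.3]{BM}.'' What you have done is reconstruct a self-contained proof from ingredients the paper itself states, and the assembly is sound. The key step, $\rho(d)=\ell$, is obtained exactly as it should be: Berger's complex (Theorem \ref{thm.berger}) has differentials $\cdot e$ and $\cdot e^{m-1}$ whose matrix entries have positive degree, so for an $m$-Koszul algebra it is the \emph{minimal} free resolution of ${}_Sk$, with the term in homological position $i$ equal to $P_{\rho(i)}$; since $\pdim_S k=\gldim S=d$ (standard for connected graded algebras, and AS-regularity gives $\Ext^d_S(k,S)\neq 0$ directly), the top term sits in position $d$, and the unnumbered Proposition preceding Proposition \ref{prop.w} identifies that top term as $P_\ell$. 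There is no circularity in invoking that Proposition, since its justification is duality ($\Hom_S(P_\bullet,S)$ resolves $k_S(\ell)$), independent of the present lemma. Your two case analyses are then immediate: for $m=2$, $\rho(i)=i$ gives $\ell=d$ (and $2$-Koszul is classical Koszul by definition, as the introduction records); for $m\ge 3$, the internal-degree drop of $1$ at the top arrow $P_\ell\to P_{\ell-1}$ forces $d$ odd, because $\rho(i)-\rho(i-1)=1$ exactly when $i$ is odd and equals $m-1\ge 2$ otherwise, whence $\ell=\rho(d)=\tfrac{1}{2}m(d-1)+1$. Indeed this parity observation is precisely what the paper's own Remark following that Proposition asserts without proof; your argument supplies the bookkeeping behind it. What your route buys over the paper's citation is a proof internal to the paper's framework, presumably close in spirit to Berger--Marconnet's original argument; its only cost is the pair of standard verifications (minimality of the complex, and $\pdim_S k=\gldim S$) that you correctly identify and handle.
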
 

\subsection{}
 
Suppose for a moment that $\nu$ is the Nakayama automorphism of a noetherian connected graded AS-regular algebra.
Reyes-Rogalski-Zhang have proved that $\hdet(\nu)=1$  \cite[Thm. 5.3]{RRZ2}. This improved their earlier result which 
required $A$ to also be Koszul \cite[Thm.6.3]{RRZ}. The next result extends this to the $m$-Koszul case 
without the noetherian hypothesis.

\begin{theorem} \label{thm.hdn}
With the notation in \S\ref{ssect.notn1},  $\hdet (\nu)=1$. 
\end{theorem}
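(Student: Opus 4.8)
The plan is to reduce everything to a direct computation of $\nu^{\otimes \ell}(\sfw)$. By Theorem \ref{thm.msss} applied to the automorphism $\s=\nu$, we have $\nu^{\otimes \ell}(\sfw)=\hdet(\nu)\sfw$, so it suffices to evaluate the left-hand side independently and read off the scalar. The only structural inputs I would need are the twisted-superpotential identity of Theorem \ref{thm.mnw}, namely $(\nu \otimes \id^{\otimes \ell-1})\phi(\sfw)=(-1)^{d+1}\sfw$, together with the description of $\ell$ in Lemma \ref{lem.mek}.

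The key step is to iterate the relation $\ell$ times. I would set $T:=(\nu \otimes \id^{\otimes \ell-1})\circ \phi$, so that Theorem \ref{thm.mnw} reads $T(\sfw)=(-1)^{d+1}\sfw$ and hence $T^\ell(\sfw)=(-1)^{(d+1)\ell}\sfw$. On the other hand, a direct check on a decomposable tensor shows that $T$ sends $v_1\otimes \cdots \otimes v_\ell$ to $\nu(v_\ell)\otimes v_1\otimes \cdots \otimes v_{\ell-1}$: the cyclic shift $\phi$ brings the last factor to the front and $\nu$ then acts on it. Iterating, each of the $\ell$ factors is cycled once to the front and has $\nu$ applied exactly once, so $T^\ell=\nu^{\otimes \ell}$. (Equivalently, one can cite the identity $\phi^j(\sfw)=(-1)^{(d+1)j}\big((\nu^{-1})^{\otimes j}\otimes \id^{\otimes \ell-j}\big)(\sfw)$ already established in the proof of Theorem \ref{thm.BM.nak} and set $j=\ell$, using $\phi^\ell=\id$.) Combining the two expressions for $T^\ell(\sfw)$ gives $\nu^{\otimes \ell}(\sfw)=(-1)^{(d+1)\ell}\sfw$, and therefore $\hdet(\nu)=(-1)^{(d+1)\ell}$.

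It then remains to check that the exponent $(d+1)\ell$ is always even, and this is where Lemma \ref{lem.mek} enters. If $m=2$ then $\ell=d$, so $(d+1)\ell=d(d+1)$ is a product of consecutive integers and hence even. If $m\ge 3$ then the same lemma tells us $d$ is odd, so $d+1$ is even and again $(d+1)\ell$ is even. In both cases $(-1)^{(d+1)\ell}=1$, which gives $\hdet(\nu)=1$.

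The computation is short; I expect the only point requiring care to be the verification that iterating $T$ accumulates a full $\nu^{\otimes \ell}$ — i.e. that $\phi^\ell=\id$ (the cyclic shift of $\ell$ tensor factors has order $\ell$) and that the sign $(-1)^{d+1}$ picked up at each of the $\ell$ steps multiplies to $(-1)^{(d+1)\ell}$. Once that is in place, the parity bookkeeping via Lemma \ref{lem.mek} is the only remaining ingredient, and it is entirely elementary.
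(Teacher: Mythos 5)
Your proof is correct, and it takes a genuinely shorter route than the paper's. The paper first proves $\hdet(\e^{d+1}\nu)=1$ by working inside the Frobenius algebra $E=\Ext^*_S(k,k)$: it combines Theorem \ref{thm.BM.nak} ($\mu=(\e^{d+1}\nu)^!$) with Lemma \ref{lem.ndxe} (the pairing transforms by $\hdet$) to get $\<\xi,\eta\>=\hdet(\e^{d+1}\nu)\<\xi,\eta\>$, and only then converts this to $\hdet(\nu)=(-1)^{(d+1)\ell}$ via Theorem \ref{thm.msss}, finishing with the same parity argument from Lemma \ref{lem.mek} that you use. You bypass the Ext-algebra machinery entirely and argue at the level of the tensor $\sfw$ itself: setting $T=(\nu\otimes\id^{\otimes \ell-1})\circ\phi$, Theorem \ref{thm.mnw} gives $T(\sfw)=(-1)^{d+1}\sfw$, and your verification on decomposable tensors that $T^{\ell}=\nu^{\otimes\ell}$ (each factor is cycled to the front once and receives $\nu$ exactly once, and $\phi^{\ell}=\id$) is correct and suffices by linearity; combined with Theorem \ref{thm.msss} applied to $\s=\nu$ (legitimate, since $\nu$ is by definition a graded algebra automorphism of $S$) this yields $\hdet(\nu)=(-1)^{(d+1)\ell}$ directly. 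As you note, your iteration identity is exactly the $j=\ell$ case of the formula $\phi^{j}(\sfw)=(-1)^{(d+1)j}\big((\nu^{-1})^{\otimes j}\otimes\id^{\otimes \ell-j}\big)(\sfw)$ that the paper proves inside Theorem \ref{thm.BM.nak}, so the computation is implicitly present in the paper but deployed there for a different purpose. What the paper's longer route buys is the intermediate fact $\hdet(\e^{d+1}\nu)=1$ phrased via the Nakayama automorphism of $E$, which fits its broader theme relating $\mu$ and $\nu$; what yours buys is independence from Theorem \ref{thm.BM.nak} and Lemma \ref{lem.ndxe} altogether, making the proof of $\hdet(\nu)=1$ rest only on Theorems \ref{thm.msss} and \ref{thm.mnw} plus the elementary parity bookkeeping.
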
 

\begin{proof}   
Let $\xi\in E^i$ and $\eta\in E^{d-i}$.  
By Theorem \ref{thm.BM.nak},   $\mu=(\e^{d+1}\nu)^!$ so
\begin{align*}
\<\xi, \eta \> 
 =\<\eta, \mu(\xi)\> &=\<\mu(\xi), \mu(\eta)\> \\
& =\<(\e^{d+1}\nu)^!(\xi), (\e^{d+1}\nu)^!(\eta)\> \\
& =\hdet (\e^{d+1}\nu)\<\xi, \eta\>
\end{align*}
where the last equality is given by Lemma \ref{lem.ndxe}.
Hence $\hdet (\e^{d+1}\nu)=1$. Now, by Theorem \ref{thm.msss}, 
$$
\sfw  \; = \;   \hdet (\e^{d+1}\nu)\sfw \; = \; (\e^{d+1}\nu)^{\otimes \ell}(\sfw).
$$
Therefore
$$
\sfw =((-1)^{d+1}\nu)^{\otimes \ell}(\sfw)  =(-1)^{(d+1)\ell}\nu^{\otimes \ell}(\sfw)=(-1)^{(d+1)\ell}\hdet (\nu)\sfw.
$$
Hence $\hdet (\nu)=(-1)^{(d+1)\ell}$.  But either $d$ is odd or $\ell$ is even by Lemma \ref{lem.mek}
so $\hdet (\nu)=(-1)^{(d+1)\ell}=1$. 
\end{proof}

Since Theorem \ref{thm.hdn} does not need the noetherian hypothesis it might be reasonable to drop the noetherian assumption
 in \cite [Conjecture 6.4]{RRZ}.

\begin{conjecture} 
If $\nu$ is  the Nakayama automorphism of an AS-regular algebra,  then $\hdet (\nu)=1$. 
\end{conjecture}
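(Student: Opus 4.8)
The plan is to leverage two facts already in hand: that the Nakayama automorphism $\mu$ of the Frobenius algebra $E=\Ext^*_S(k,k)$ is an isometry of its Frobenius pairing, and the explicit identification $\mu=(\e^{d+1}\nu)^!$ from Theorem \ref{thm.BM.nak}. First I would record the elementary observation, valid for any Frobenius algebra, that applying the defining relation $\langle\xi,\eta\rangle=\langle\eta,\mu(\xi)\rangle$ twice gives $\langle\xi,\eta\rangle=\langle\mu(\xi),\mu(\eta)\rangle$ for all $\xi\in E^i$, $\eta\in E^{d-i}$; here the second application uses the relation in \S\ref{ssect.nak.autom2} with first argument $\eta$ and second argument $\mu(\xi)\in E^i$, which is legitimate since $\mu$ is a graded automorphism.

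Next I would substitute $\mu=(\e^{d+1}\nu)^!$ into the right-hand side and invoke Lemma \ref{lem.ndxe} with $\sigma=\e^{d+1}\nu\in\Aut(S)$, which evaluates $\langle\sigma^!(\xi),\sigma^!(\eta)\rangle=\hdet(\sigma)\langle\xi,\eta\rangle$. Comparing this with the isometry identity and using non-degeneracy of the pairing (choose $\xi,\eta$ with $\langle\xi,\eta\rangle\neq 0$) forces $\hdet(\e^{d+1}\nu)=1$. This is the crux of the argument: it converts information about the Nakayama automorphism of the Ext-algebra into a homological-determinant identity on $S$.

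Having $\hdet(\e^{d+1}\nu)=1$, I would feed it into Theorem \ref{thm.msss}, which states $\hdet(\tau)\sfw=\tau^{\otimes\ell}(\sfw)$ for every $\tau\in\Aut(S)$. Taking $\tau=\e^{d+1}\nu$ yields $\sfw=(\e^{d+1}\nu)^{\otimes\ell}(\sfw)$. Since $\e$ acts as $-1$ on $V=S_1$, as an element of $\GL(V)$ we have $\e^{d+1}\nu=(-1)^{d+1}\nu$, so $(\e^{d+1}\nu)^{\otimes\ell}=(-1)^{(d+1)\ell}\nu^{\otimes\ell}$; applying Theorem \ref{thm.msss} once more, this time to $\nu$ itself, gives $\sfw=(-1)^{(d+1)\ell}\hdet(\nu)\sfw$, hence $\hdet(\nu)=(-1)^{(d+1)\ell}$.

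The final step is the parity bookkeeping, via Lemma \ref{lem.mek}: when $m=2$ one has $\ell=d$, so $(d+1)\ell=d(d+1)$ is even; when $m\geq 3$, $d$ is odd, so $d+1$ is even and again $(d+1)\ell$ is even. In both cases $(-1)^{(d+1)\ell}=1$, so $\hdet(\nu)=1$. I do not anticipate any serious obstacle: the whole proof is a short chain of substitutions resting on the earlier results, and the only point demanding care is tracking the sign $(-1)^{(d+1)\ell}$ and confirming its parity through the numerical relations between $m$, $d$, and $\ell$.
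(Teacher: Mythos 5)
Your chain of substitutions is correct, and in fact it reproduces, essentially step for step, the paper's own proof of Theorem \ref{thm.hdn}: the isometry identity $\langle\xi,\eta\rangle=\langle\mu(\xi),\mu(\eta)\rangle$ obtained by applying the defining relation twice, the substitution $\mu=(\e^{d+1}\nu)^!$ from Theorem \ref{thm.BM.nak} combined with Lemma \ref{lem.ndxe} to force $\hdet(\e^{d+1}\nu)=1$, the double application of Theorem \ref{thm.msss} yielding $\hdet(\nu)=(-1)^{(d+1)\ell}$, and the parity check through Lemma \ref{lem.mek}. As a proof of that theorem your write-up is complete and sound.

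The genuine gap is one of scope: the statement you were asked to prove is the Conjecture, which asserts $\hdet(\nu)=1$ for the Nakayama automorphism of an \emph{arbitrary} AS-regular algebra, with no $m$-Koszul hypothesis, and every tool in your argument is tied to the $m$-Koszul setting. The superpotential $\sfw$ exists and determines $S$ only via Theorem \ref{thm.DV} and Proposition \ref{prop.w}, which rest on the $m$-Koszul complex; the Frobenius structure on $E=\Ext^*_S(k,k)$ with the explicit pairing $\langle\xi,\eta\rangle=(-1)^{i(d-i)}(\xi\eta)(\sfw)$ comes from \cite[Cor. 5.12]{BM}, again an $m$-Koszul result; and Theorem \ref{thm.BM.nak}, Lemma \ref{lem.ndxe}, Theorem \ref{thm.msss}, and Lemma \ref{lem.mek} all carry the standing hypotheses of \S\ref{ssect.notn1}. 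For a general AS-regular algebra, $\Ext^i_S(k,k)$ need not be concentrated in a single degree, there is no single tensor $\sfw$ controlling the algebra, and no analogues of these lemmas are available --- which is precisely why the paper proves only the $m$-Koszul case as Theorem \ref{thm.hdn} (the noetherian case being \cite[Thm. 5.3]{RRZ2}, by quite different methods) and records the general statement as an open conjecture. So your argument settles the special case but leaves the conjecture itself untouched; to be honest about its reach, it should be presented as a proof of Theorem \ref{thm.hdn}, not of the Conjecture.
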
 

\subsection{}
Let $x_1, \dots, x_n$ be a basis for $V$, and ${\bf x}^t=(x_1, \dots, x_n)$.  For every $\sfw\in V^{\otimes \ell}$, there exists a unique $n\times n$ matrix $M$  whose entries are in $V^{\otimes \ell-2}$ such that $\sfw={\bf x}^tM{\bf x}$ with the usual matrix multiplication.  
 The $d=3$ case of the following result is proved in \cite [Corollary 9.3]{V} and \cite [Prop. 6.5]{BM}. 

\begin{theorem} \label{thm.mnw2} 
Adopt the notation in \S\ref{ssect.notn1}. 
Choose a basis $x_1, \dots, x_n$ for $V=S_1$ and define ${\bf x}^t=(x_1,\ldots,x_n) \in V^n$.
 Let $M$ be the unique $n\times n$ matrix with entries in $V^{\otimes \ell-2}$ such that
$\sfw={\bf x}^tM{\bf x}$. There is a unique matrix $Q\in \GL(n, k)$ such that $({\bf x}^tM)^t=QM{\bf x}$.
 With respect to the ordered basis $x_1, \dots, x_n$, the Nakayama automorphism of $S$ is
 $$
 \nu =(-1)^{d+1}Q^{-t}\in \GL(n, k).
 $$
 Thus, $S$ is Calabi-Yau if and only if $Q=(-1)^{d+1}\Id$. 
\end{theorem}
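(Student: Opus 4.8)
The plan is to reduce everything to the twisted-superpotential identity of Theorem \ref{thm.mnw} by expressing $\sfw$ through its first and last tensor legs. First I would record the (trivial) existence and uniqueness of $M$: under the canonical isomorphism $V^{\otimes \ell}=V\otimes V^{\otimes \ell-2}\otimes V$ we may write $\sfw=\sum_{i,j}x_i\otimes m_{ij}\otimes x_j$ with uniquely determined $m_{ij}\in V^{\otimes \ell-2}$, and this is exactly $\sfw={\bf x}^tM{\bf x}$ with $M=(m_{ij})$. I would then name the two column vectors in the statement, $d_i:=(M{\bf x})_i=\sum_j m_{ij}\otimes x_j$ and $c_j:=({\bf x}^tM)_j=\sum_i x_i\otimes m_{ij}$, both in $V^{\otimes \ell-1}$, and observe that $d_i=(\xi_i\otimes \id^{\otimes \ell-1})(\sfw)=\pd_{\xi_i}\sfw$ is a first-order partial derivative of $\sfw$, so that $\{d_1,\dots,d_n\}$ spans $\pd(k\sfw)$.

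For the existence of $Q$ I would feed Theorem \ref{thm.mnw} into this notation. Applying $\phi$ moves the final leg $x_j$ to the front, giving $\phi(\sfw)=\sum_j x_j\otimes c_j$, so if $N=(N_{ij})$ is the matrix of $\nu$ (with $\nu(x_j)=\sum_i N_{ij}x_i$) then $(\nu\otimes \id^{\otimes \ell-1})\phi(\sfw)=\sum_i x_i\otimes\big(\sum_j N_{ij}c_j\big)$. Comparing this with $(-1)^{d+1}\sfw=(-1)^{d+1}\sum_i x_i\otimes d_i$ and using that $\{x_i\}$ is a basis of the leading tensor factor yields $\sum_j N_{ij}c_j=(-1)^{d+1}d_i$, i.e. $N{\bf c}=(-1)^{d+1}{\bf d}$ in matrix form. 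Rearranging gives ${\bf c}=(-1)^{d+1}N^{-1}{\bf d}$, which exhibits each $c_j$ as a $k$-linear combination of the $d_i$; hence $Q=(-1)^{d+1}N^{-1}\in \GL(n,k)$ satisfies $({\bf x}^tM)^t=QM{\bf x}$, proving existence.

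For uniqueness of $Q$ I would argue that the $d_i$ are linearly independent. If $\sum_i\lambda_i d_i=0$ then $\pd_\psi\sfw=0$ for $\psi=\sum_i\lambda_i\xi_i$, so $\sfw$ lies in $\ker(\psi)\otimes V^{\otimes \ell-1}$; since $\sfw$ is a twisted superpotential a generator absent from the first leg is absent from every leg, and AS-regularity (minimality of the generating set $V=S_1$) forbids this, forcing $\psi=0$. Equivalently, $\{d_i\}$ spans $\pd(k\sfw)\subseteq W_{\ell-1}$, and $\dim_k W_{\ell-1}=\dim_k S^!_{\ell-1}=\dim_k V=n$ by Lemma \ref{lem.Wi} and the shape of the minimal free resolution of ${}_Sk$ recalled above; being $n$ spanning vectors in an $n$-dimensional space they form a basis. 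Linear independence of the $d_i$ makes $({\bf x}^tM)^t=QM{\bf x}$ determine $Q$ uniquely, and it also upgrades $N{\bf c}=(-1)^{d+1}{\bf d}$ to the matrix identity $NQ=(-1)^{d+1}\Id$.

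Finally I would read off the Nakayama automorphism. The identity $NQ=(-1)^{d+1}\Id$ gives $N=(-1)^{d+1}Q^{-1}$; the matrix convention used in the statement (images of basis vectors written as rows) transposes $N$, so this reads $\nu=(-1)^{d+1}Q^{-t}$. The Calabi-Yau criterion is then immediate: $\nu=\id$ if and only if $(-1)^{d+1}Q^{-t}=\Id$, i.e. $Q=(-1)^{d+1}\Id$. The only genuinely non-formal step is the linear independence of the $d_i$ (equivalently $\pd(k\sfw)=W_{\ell-1}$): this is where AS-regularity and the one-dimensionality of $W_\ell$ from Proposition \ref{prop.w} are essential, since without it $Q$ need neither exist nor be unique. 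Everything else is bookkeeping built on Theorem \ref{thm.mnw}.
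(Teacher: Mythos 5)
Your overall route is the paper's route, unpacked: the paper proves this theorem in one line by citing \cite{AS}, Prop.~2.11, for the equivalence between $({\bf x}^tM)^t=QM{\bf x}$ and $(Q^{-t}\otimes \id^{\otimes \ell-1})(\phi(\sfw))=\sfw$, and then invoking Theorem \ref{thm.mnw}. Your first two paragraphs re-derive that dictionary by hand, and the computation (including the identification $\phi(\sfw)=\sum_j x_j\otimes c_j$ and the resulting $N{\bf c}=(-1)^{d+1}{\bf d}$) is correct; your reconciliation $\nu=(-1)^{d+1}Q^{-t}$ is right provided one reads the matrix of $\nu$ in the row convention $\nu(x_i)=\sum_j A_{ij}x_j$, which is the convention inherited from \cite{AS}. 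So existence of $Q$ is sound.

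The genuine gap is in your uniqueness step, in both variants. Variant (b) is a non sequitur: you show $\{d_1,\dots,d_n\}$ spans $\pd(k\sfw)$ and that $\dim_k W_{\ell-1}=n$, but $\pd(k\sfw)$ is only known to be a \emph{subspace} of $W_{\ell-1}$, and $n$ vectors spanning a possibly proper subspace of an $n$-dimensional space need not be independent; you would first need $\pd(k\sfw)=W_{\ell-1}$, which is exactly what is at stake. Variant (a) is also not a proof: the twisted-superpotential identity does confine each leg of $\sfw$ to a hyperplane when $\pd_\psi\sfw=0$, but ``minimality of the generating set $V=S_1$'' does not forbid this --- $\sfw$ is not itself a relation of $S$, and nothing elementary rules out all relations in $R=\pd^{\ell-m}(k\sfw)$ having a leg confined to a hyperplane. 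There are two repairs. The cheap one is to run your dictionary backwards, which is what the citation of \cite{AS} accomplishes: any $Q$ with ${\bf c}=Q{\bf d}$ gives $\phi(\sfw)=(\tau\otimes\id^{\otimes\ell-1})(\sfw)$ where $\tau\in\GL(V)$ has matrix determined by $Q$, hence $\big((-1)^{d+1}\tau^{-1}\otimes\id^{\otimes\ell-1}\big)\phi(\sfw)=(-1)^{d+1}\sfw$, and the uniqueness clause of Theorem \ref{thm.mnw} pins down $\tau$, hence $Q$ --- no independence of the $d_i$ is needed. The honest one, if you want the independence itself, is the Frobenius property of $E$ (\S4.3 of the paper, via \cite{BM}, Cor.~5.12): since $E^{d-1}\cong S^!_{\ell-1}=W_{\ell-1}^*$ (using Lemma \ref{lem.dl} to check $\rho(d-1)=\ell-1$ in both the $m=2$ and $m\ge 3$ cases), non-degeneracy of the pairing $E^1\times E^{d-1}\to k$, $\langle\xi,\eta\rangle=\pm(\xi\eta)(\sfw)=\pm\,\eta(\pd_\xi\sfw)$, says precisely that $\xi\mapsto\pd_\xi\sfw$ is injective on $V^*$; with your dimension count this yields $\pd(k\sfw)=W_{\ell-1}$ and the $d_i$ form a basis. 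With either repair in place, the rest of your argument goes through.
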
 

\begin{proof}  
By \cite [Prop. 2.11]{AS},  $({\bf x}^tM)^t=QM{\bf x}$ if and only if $(Q^{-t}\otimes \id_V ^{\otimes \ell-1})(\phi (\sfw))=\sfw$, so the result follows from Theorem \ref{thm.mnw}. 
\end{proof} 

\begin{example} 
Let $S=k[x, y]$ be the polynomial algebra in two variables of degree 1. Since
$$
\sfw=xy-yx=\begin{pmatrix} x & y \end{pmatrix}\begin{pmatrix} 0 & 1 \\ -1 & 0 \end{pmatrix}\begin{pmatrix} x \\ y \end{pmatrix},
$$  
 the matrix $Q$ in Theorem \ref{thm.mnw2} is $-\Id$.  Since $S$ is Calabi-Yau, its  Nakayama automorphism is 
the identity. Therefore 
$$
(\nu \otimes \id)\phi (\sfw)=(\id\otimes \id)(yx-xy)=-\sfw
$$
in accordance with the factor $(-1)^{d+1}$ that appears in Theorems \ref{thm.mnw} and  \ref{thm.mnw2} since $d=\gldim(S)=2$. 
\end{example}

\section{Twists } 
\label{S.twists}

In this section, we examine the relation between twisting a superpotential and twisting an algebra by an automorphism \`a la Zhang
\cite{Z}.

\subsection{}  
Let $\s\in \GL(V)$. Let $\sfw \in V^{\otimes \ell}$ and let $W$ be a subspace of $V^{\otimes \ell}$. We define 
$$
\sfw^{\s}:=(\s^{\ell-1}\otimes \cdots  \otimes\s\otimes \id_V)(\sfw).
$$
and
$$
W^{\s}:=(\s^{\ell-1}\otimes \cdots \otimes \s\otimes \id_V)(W) = \{\sfw^\s \; | \; \sfw \in W\}.
$$

\subsection{}
Let $A$ be a graded algebra and $\s\in \Aut(A)$. The {\sf Zhang twist} of $A$ by $\s$ is the graded vector space $A^{\s}:=A$ with  multiplication $a*b:=a\s^i(b)$ for $a\in A_i, b\in A$.\footnote{The paper \cite {RRZ} uses the convention $a*b=\s^j(a)b$ for $a\in A, b\in A_j$.  The twist of $A$ by $\s\in \Aut(A)$ in the sense of \cite {RRZ} is the same as the twist of $A$ by $\s^{-1}\in \Aut(A)$ in our sense.}

\begin{lem} 
Let $R$ be a subspace of $V^{\otimes m}$ and let $A=TV/(R)$. If $\s \in \Aut(A)$, then   $A^{\s} \cong TV/(R^{\s})$ . 
 \end{lem}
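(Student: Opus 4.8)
The plan is to realize $A^{\s}$ as a quotient of the Zhang twist of the free algebra $TV$, and then to identify that twist with $TV$ itself via an explicit graded algebra isomorphism that converts the twisted relations $R^{\s}$ into the original relations $R$.

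First I would extend $\s$ to the graded algebra automorphism of $TV$ acting as $\s^{\otimes n}$ on $V^{\otimes n}$, which I continue to denote $\s$. Since $\s \in \Aut(A)$ means $\s^{\otimes m}(R)=R$, this automorphism preserves the two-sided ideal $(R)$ and descends to $\s$ on $A$. The Zhang twist is functorial for surjections intertwining the chosen automorphisms: from $\pi(\s^i(b))=\s^i(\pi(b))$ one checks directly that $\pi\colon (TV)^{\s}\to A^{\s}$ remains a graded algebra homomorphism, still surjective and with the same kernel as a graded subspace. Hence $A^{\s}\cong (TV)^{\s}/(R)$, where $(R)$ is now the two-sided $*$-ideal generated by $R$; being the kernel of an algebra map, it coincides as a subspace with the ordinary ideal $\sum_{s,t\ge 0} V^{\otimes s}\otimes R\otimes V^{\otimes t}$.

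Next I would write down the comparison map $F\colon TV \to (TV)^{\s}$, namely the algebra homomorphism (concatenation on the source, twisted product $*$ on the target) that is the identity on $V=(TV)_1$. A short induction using $a*b=a\,\s^i(b)$ for $a$ of degree $i$ gives
\begin{equation*}
F(x_1 \otimes \cdots \otimes x_n) \; = \; x_1 \otimes \s(x_2) \otimes \cdots \otimes \s^{n-1}(x_n),
\end{equation*}
so $F$ restricts on each $V^{\otimes n}$ to the invertible map $\id \otimes \s \otimes \cdots \otimes \s^{n-1}$; in particular $F$ is a graded algebra isomorphism. The crux is then to check that $F$ carries the ordinary ideal $(R^{\s})\subseteq TV$ onto $(R)\subseteq (TV)^{\s}$, matched piece by piece. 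On a block $V^{\otimes s}\otimes R^{\s}\otimes V^{\otimes t}$ the outer factors are full and hence preserved, while on the middle $m$ slots $F$ acts as $\s^{s}\otimes \s^{s+1}\otimes\cdots\otimes \s^{s+m-1}=(\s^{s})^{\otimes m}\circ(\id\otimes \s\otimes\cdots\otimes \s^{m-1})$. Because $R^{\s}=(\s^{m-1}\otimes\cdots\otimes \s\otimes \id)(R)$, applying $(\id\otimes \s\otimes\cdots\otimes \s^{m-1})$ to $R^{\s}$ amounts to applying $\s^{j}\circ\s^{m-1-j}=\s^{m-1}$ in the $j$-th slot, i.e. applying $(\s^{\otimes m})^{m-1}$ to $R$, which returns $R$; and $(\s^{s})^{\otimes m}$ fixes $R$ for the same reason $\s^{\otimes m}(R)=R$. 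Thus $F(V^{\otimes s}\otimes R^{\s}\otimes V^{\otimes t})=V^{\otimes s}\otimes R\otimes V^{\otimes t}$, and summing over $s+m+t=n$ yields $F\big((R^{\s})\big)=(R)$. Consequently $F$ descends to a graded algebra isomorphism $TV/(R^{\s}) \xrightarrow{\;\sim\;} (TV)^{\s}/(R)=A^{\s}$, as required.

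I expect the only genuine bookkeeping to be this last matching of blocks, tracking the powers of $\s$ on the tensor slots. The conceptual content there is entirely the repeated use of $\s^{\otimes m}(R)=R$, both to absorb the uniform shift $(\s^{s})^{\otimes m}$ and to collapse $(\id\otimes \s\otimes\cdots\otimes \s^{m-1})$ on $R^{\s}$ to the identity up to the harmless power $(\s^{\otimes m})^{m-1}$; everything else — functoriality of the twist and the explicit form of $F$ — is routine.
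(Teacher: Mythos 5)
Your proof is correct and complete. The paper actually states this lemma without any proof, treating it as routine, so there is no argument of theirs to diverge from; your route --- presenting $A^{\s}$ as $(TV)^{\s}/(R)$ via functoriality of the twist, untwisting the free algebra through the graded isomorphism $F$ acting as $\id\otimes\s\otimes\cdots\otimes\s^{n-1}$ on $V^{\otimes n}$, and matching the blocks $V^{\otimes s}\otimes R^{\s}\otimes V^{\otimes t}$ using $\s^{\otimes m}(R)=R$ --- is exactly the kind of slot-by-slot power bookkeeping the paper itself uses in the proofs of Propositions \ref{prop.q3} and \ref{prop.swi}, and all of your steps (in particular the identities $\s^{s}\otimes\cdots\otimes\s^{s+m-1}=(\s^{s})^{\otimes m}\circ(\id\otimes\s\otimes\cdots\otimes\s^{m-1})$ and $(\id\otimes\s\otimes\cdots\otimes\s^{m-1})\circ(\s^{m-1}\otimes\cdots\otimes\s\otimes\id)=(\s^{\otimes m})^{m-1}$) check out.
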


\begin{proposition} \label{prop.q3}
Let $W\subseteq   V^{\otimes \ell}$ be a subspace and $\s\in \GL(V)$. 
\begin{enumerate} 
\item{} $\partial ^i(W^{\s})=(\partial ^iW)^{\s}$.  
\item{} If $\cD(W, i)=TV/(R)$ where $R\subseteq   V^{\otimes \ell-i}$, then $\cD(W^{\s}, i)=TV/(R^{\s})$.  
\item{} If $\s \in \Aut \big(\cD(W, i)\big)$, then $\cD(W^{\s}, i)\cong \cD(W, i)^{\s}$. 
\end{enumerate} 
\end{proposition}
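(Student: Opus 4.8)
The plan is to prove the three parts in order, since (2) is a one-line consequence of (1) and (3) follows by combining (2) with the preceding Lemma (that $A^\s\cong TV/(R^\s)$ when $\s\in\Aut(A)$). All the real content sits in part (1). To keep the bookkeeping clean I would abbreviate the twisting operator on the $p$-th tensor power by $\tau_p := \s^{p-1}\otimes\cdots\otimes\s\otimes\id_V \colon V^{\otimes p}\to V^{\otimes p}$, so that $X^\s = \tau_p(X)$ for every subspace $X \subseteq V^{\otimes p}$. Note that $\tau_p$ places $\id_V$ on the last slot and the top power $\s^{p-1}$ on the first slot, i.e.\ on the slot against which $\partial$ contracts.

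For part (1) I would first settle the case $i=1$ for an arbitrary subspace $X\subseteq V^{\otimes p}$ and then induct. The key is the operator identity
\[
(\psi\otimes\id^{\otimes p-1})\circ\tau_p \;=\; \tau_{p-1}\circ\big((\psi\circ\s^{p-1})\otimes\id^{\otimes p-1}\big),
\]
valid for every $\psi\in V^*$, which one checks by evaluating both sides on a decomposable tensor $v_1\otimes\cdots\otimes v_p$: contracting the first slot turns the operator $\s^{p-1}$ sitting there into the scalar functional $\psi\circ\s^{p-1}$, while the operators $\s^{p-2},\dots,\s,\id_V$ on the remaining $p-1$ slots are exactly $\tau_{p-1}$. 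Because $\s$ is invertible, $\psi\mapsto\psi\circ\s^{p-1}$ is a bijection of $V^*$, so letting $\psi$ range over $V^*$ in the identity gives $\partial(X^\s)=\tau_{p-1}(\partial X)=(\partial X)^\s$. The inductive step is then immediate: $\partial^{i+1}(W^\s)=\partial\big((\partial^i W)^\s\big)=\big(\partial^{i+1}W\big)^\s$, using the induction hypothesis followed by the $i=1$ case applied to the subspace $\partial^iW\subseteq V^{\otimes \ell-i}$.

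Parts (2) and (3) then fall out quickly. For (2), the hypothesis $\cD(W,i)=TV/(R)$ means precisely that $R=\partial^iW$, so $R^\s=(\partial^iW)^\s=\partial^i(W^\s)$ by part (1), whence $\cD(W^\s,i)=TV/\big(\partial^i(W^\s)\big)=TV/(R^\s)$. For (3), set $A=\cD(W,i)=TV/(R)$ with $R=\partial^iW\subseteq V^{\otimes\ell-i}$; since $\s\in\Aut(A)$, the preceding Lemma gives $A^\s\cong TV/(R^\s)$, and part (2) identifies $TV/(R^\s)$ with $\cD(W^\s,i)$, so $\cD(W^\s,i)\cong\cD(W,i)^\s$. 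The only genuine obstacle is the slot-indexing in the $i=1$ computation of part (1); everything hinges on the fact that $\partial$ contracts the same slot (the one opposite $\id_V$) that carries the top power of $\s$, which is what makes the twist shift cleanly from $\tau_p$ to $\tau_{p-1}$ under differentiation.
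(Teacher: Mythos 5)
Your proof is correct and takes essentially the same approach as the paper: your operator identity $(\psi\otimes\id^{\otimes p-1})\circ\tau_p=\tau_{p-1}\circ\big((\psi\circ\s^{p-1})\otimes\id^{\otimes p-1}\big)$, together with the bijectivity of $\psi\mapsto\psi\circ\s^{p-1}$ on $V^*$, is exactly the chain of equalities in the paper's proof of part (1), followed by the same induction. Your deductions of (2) and (3) from part (1) and the preceding Lemma likewise match the paper's argument step for step.
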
 

\begin{proof}
(1)  
Let $\id$ denote the identity automorphism of $V$. Since 
\begin{align*}
\partial (W^{\s})
& =\{(\psi \otimes \id^{\otimes \ell-1})(\s^{\ell-1}\otimes \s^{\ell-2}\otimes \cdots \otimes \id)(\sfw)\; | \; \psi\in V^*, \sfw\in W\} \\
& =\{(\psi \s^{\ell-1}\otimes \s^{\ell-2}\otimes \cdots \otimes \id)(\sfw)\; | \; \psi\in V^*, \sfw\in W\} \\
& =\{(\psi \otimes \s^{\ell-2}\otimes \cdots \otimes \id)(\sfw)\; | \; \psi\in V^*, \sfw\in W \} \\ 
& =\{(\s^{\ell-2}\otimes \cdots \otimes \id)(\psi \otimes \id^{\otimes \ell-1})(\sfw)\; | \; \psi\in V^*, \sfw\in W\} \\
& =(\partial W)^{\s},
\end{align*}
an induction argument shows that  $\partial ^i(W^{\s})=(\partial ^iW)^{\s}$.  

(2) If $\cD(W, i)=TV/(R)$, then $R=\partial ^i(W)$, so 
$$\cD(W^{\s}, i)=TV/(\partial ^i(W^{\s}))=TV/((\partial ^iW)^{\s})=TV/(R^{\s})$$
by (1). 

(3) If $\cD(W, i)=TV/(R)$ and $ \s\in \Aut\big( \cD(W, i)\big)$, then 
$$\cD(W^{\s}, i)=TV/(R^{\s}) \cong \big(TV/(R)\big)^{ \s}=\cD(W, i)^{\s}$$ by (2).
\end{proof} 

In the next result, the Nakayama automorphism of $TV/(R)$
is viewed as an element of $\GL(V)$. 

\begin{proposition} \label{prop.wst} 
Adopt the notation in \S\ref{ssect.notn1}. 
If  $ \s\in \Aut(S)$,  then $\sfw^{\s}$ is a $(-1)^{d+1}(\hdet  \s)^{-1}\s^{\ell}\nu $-twisted superpotential. 
\end{proposition}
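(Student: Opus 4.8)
The plan is to verify directly that $\sfw^{\s}$ satisfies the defining equation of a $\tau$-twisted superpotential for $\tau:=(-1)^{d+1}(\hdet\s)^{-1}\s^{\ell}\nu\in\GL(V)$, i.e. that $(\tau\otimes\id^{\otimes\ell-1})\phi(\sfw^{\s})=\sfw^{\s}$. Write the twisting operator as $t:=\s^{\ell-1}\otimes\s^{\ell-2}\otimes\cdots\otimes\s\otimes\id_V$, so that $\sfw^{\s}=t(\sfw)$ by the definition in \S\ref{S.twists}. The strategy is to move $\phi$ past $t$, then use the two facts I already have about $\sfw$ itself: Theorem \ref{thm.mnw}, which pins down $\phi(\sfw)$ in terms of $\nu$, and Theorem \ref{thm.msss}, which evaluates $\s^{\otimes\ell}(\sfw)$.

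The key bookkeeping step is the commutation relation
$$
\phi\circ t \;=\; (\s^{1-\ell}\otimes\s^{\otimes\ell-1})\circ t\circ\phi .
$$
This holds because $t$ is ``graded-diagonal'' (the $j$-th tensor slot carries $\s^{\ell-j}$) while $\phi$ is the cyclic shift of the slots; conjugating the diagonal operator by the shift moves each $\s$-power one slot over, and the discrepancy between $\phi t$ and $t\phi$ is exactly the diagonal correction $\s^{1-\ell}\otimes\s^{\otimes\ell-1}$. I would verify this by evaluating both sides on a simple tensor $v_1\otimes\cdots\otimes v_\ell$ and comparing the power of $\s$ landing in each slot; this is the one place where an indexing slip is easy, so it is the main obstacle, though the computation itself is elementary.

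With the commutation relation in hand the rest assembles mechanically. Applying $(\tau\otimes\id^{\otimes\ell-1})\phi$ to $\sfw^{\s}=t(\sfw)$ and using the relation gives an expression in $t\phi(\sfw)$; I then replace $\phi(\sfw)$ using Theorem \ref{thm.mnw} in the form $\phi(\sfw)=(-1)^{d+1}(\nu^{-1}\otimes\id^{\otimes\ell-1})(\sfw)$ (recall $(-1)^{d+1}$ is its own inverse). All the operators involved are tensor products of single-slot maps, so they compose slot by slot; carrying out this composition and comparing with $t(\sfw)=\sfw^{\s}$ reduces the desired identity to
$$
(-1)^{d+1}\bigl(\s^{1-\ell}\tau\nu^{-1}\otimes\s^{\otimes\ell-1}\bigr)(\sfw)\;=\;\sfw .
$$
Substituting $\tau=(-1)^{d+1}(\hdet\s)^{-1}\s^{\ell}\nu$ collapses the first slot: $\s^{1-\ell}\tau\nu^{-1}=(-1)^{d+1}(\hdet\s)^{-1}\s$, and the two factors of $(-1)^{d+1}$ cancel, leaving $(\hdet\s)^{-1}\s^{\otimes\ell}(\sfw)$. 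By Theorem \ref{thm.msss} this equals $(\hdet\s)^{-1}\hdet(\s)\sfw=\sfw$, completing the verification that $\sfw^{\s}$ is a $\tau$-twisted superpotential.
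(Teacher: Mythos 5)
Your proposal is correct and takes essentially the same route as the paper: the paper also verifies the defining identity directly by commuting $\phi$ past the twisting operator $\s^{\ell-1}\otimes\cdots\otimes\s\otimes\id_V$ and then invoking Theorem \ref{thm.mnw} for $\phi(\sfw)$ and Theorem \ref{thm.msss} for $\s^{\otimes\ell}(\sfw)$. Your commutation relation $\phi\circ t=(\s^{1-\ell}\otimes\s^{\otimes\ell-1})\circ t\circ\phi$ is a correct repackaging of the form the paper uses, namely $\phi\circ t=(\id\otimes\s^{\ell-1}\otimes\cdots\otimes\s)\circ\phi$, and your reduced identity then follows exactly as you say.
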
 

\begin{proof}   
By Theorem \ref{thm.msss}, $\s^{\otimes \ell}(\sfw)=\hdet ( \s)\sfw$.
By  Theorem \ref{thm.mnw},
$\phi (\sfw)=(-1)^{d+1}(\nu^{-1}\otimes \id^{\otimes \ell-1})(\sfw)$.  
Since 
\begin{align*}
 ((-1)^{d+1}&(\hdet  \s)^{-1}\s^{\ell}\nu \otimes \id^{\otimes \ell-1})\phi (\sfw^{\s}) \\
= & (-1)^{d+1}(\hdet  \s)^{-1}(\s^{\ell}\nu \otimes \id^{\otimes \ell-1})\phi ((\s^{\ell-1}\otimes \cdots \otimes \s \otimes \id)(\sfw))  \\
= & (-1)^{d+1}(\hdet  \s)^{-1}(\s^{\ell}\nu \otimes \id^{\otimes \ell-1})(\id\otimes \s^{\ell-1}\otimes \cdots \otimes \s)\phi (\sfw)  \\
= & (-1)^{d+1}(\hdet  \s)^{-1}(\s^{\ell}\nu \otimes \s^{\ell-1}\otimes \cdots \otimes \s)(-1)^{d+1}(\nu^{-1}\otimes \id^{\otimes \ell-1})(\sfw) \\
= & (\hdet  \s)^{-1}(\s^{\ell-1} \otimes \s^{\ell-2}\otimes \cdots \otimes \id)(\s\otimes \s\otimes \cdots \otimes \s)(\sfw)  \\
= & (\hdet  \s)^{-1}(\s^{\ell-1} \otimes \s^{\ell-2}\otimes \cdots \otimes \id)((\hdet  \s)\sfw) \\
= & (\s^{\ell-1} \otimes \s^{\ell-2}\otimes \cdots \otimes \id)(\sfw) \\
= & \sfw^{\s},
\end{align*}
$\sfw^{\s}$ is a twisted superpotential for $(-1)^{d+1}(\hdet  \s)^{-1}\s^{\ell}\nu$. 
\end{proof} 

In the next result, $\cD(W, i)$ is of the form $TV/(R)$ so $\Aut\big(\cD(W, i)\big) \subseteq \GL(V)$.
 
\begin{proposition} \label{prop.swi}
Let $W\subseteq  V^{\otimes \ell}$. For all $i=0,\ldots,\ell-1$,   $\s\in \Aut\big(\cD(W^\s, i)$.
\end{proposition}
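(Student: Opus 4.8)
The plan is to reduce the statement to the general principle that the automorphism used to form a Zhang twist is always an automorphism of the twist, and then to transport this through the identifications already in place. I read the assertion as: if $\s\in\Aut\big(\cD(W,i)\big)$, then $\s\in\Aut\big(\cD(W^\s,i)\big)$ for each $i=0,\dots,\ell-1$. This hypothesis is exactly what is needed to form the Zhang twist $\cD(W,i)^\s$, and Proposition \ref{prop.q3}(3) then supplies an isomorphism $\cD(W^\s,i)\cong\cD(W,i)^\s$ which is the identity on $V=\cD(W,i)_1$.

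First I would record the elementary fact that, for any graded algebra $A$ and any $\s\in\Aut(A)$, the map $\s$ is again a graded algebra automorphism of $A^\s$. Indeed, for $a\in A_p$ and $b\in A$ the twisted product is $a*b=a\,\s^p(b)$, so $\s(a*b)=\s(a)\,\s^{p+1}(b)=\s(a)*\s(b)$, using that $\s$ is degree-preserving and multiplicative on $A$. As $\s$ is bijective and degree-preserving it lies in $\Aut(A^\s)$, and its restriction to degree $1$ is the original element $\s\in\GL(V)$. Applying this to $A=\cD(W,i)$ and transporting along the degree-$1$-preserving isomorphism of Proposition \ref{prop.q3}(3) yields $\s\in\Aut\big(\cD(W^\s,i)\big)$.

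As a self-contained alternative working entirely inside $\GL(V)$, I would verify the membership by a direct tensor computation. Put $m=\ell-i$ and $R=\partial^iW\subseteq V^{\otimes m}$, so that $\cD(W,i)=TV/(R)$ and, by Proposition \ref{prop.q3}(1) and (2), $\cD(W^\s,i)=TV/(R^\s)$ with $R^\s=(\s^{m-1}\otimes\cdots\otimes\s\otimes\id)(R)$. Under the identification of $\Aut$ with a subgroup of $\GL(V)$, the claim $\s\in\Aut\big(\cD(W^\s,i)\big)$ is precisely $\s^{\otimes m}(R^\s)=R^\s$. Pushing $\s^{\otimes m}$ leg-by-leg through the twist gives $\s^{\otimes m}(R^\s)=(\s^{m-1}\otimes\cdots\otimes\s\otimes\id)\big(\s^{\otimes m}(R)\big)$, so the claim collapses to $\s^{\otimes m}(R)=R$, which is exactly the hypothesis $\s\in\Aut\big(\cD(W,i)\big)$.

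The computation is routine, so I do not expect a genuine obstacle; the two points requiring care are both bookkeeping. One must track the exponents on the tensor legs in the displayed identity above, and one must be explicit about the hypothesis: as that identity shows, $\s\in\Aut\big(\cD(W^\s,i)\big)$ is in fact equivalent to $\s\in\Aut\big(\cD(W,i)\big)$, so the statement cannot hold for an arbitrary $\s\in\GL(V)$ and the twisting automorphism must already stabilize the relations of the untwisted algebra. The only remaining thing to confirm is that the isomorphism of Proposition \ref{prop.q3}(3) and the $\Aut\hookrightarrow\GL(V)$ identification agree on $V$, which is immediate since a Zhang twist is the identity on the underlying graded vector space.
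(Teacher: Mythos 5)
Your proposal is correct, and your reading of the statement (which is garbled in the source, with an unclosed parenthesis and a suppressed hypothesis) is the intended one: one assumes $\s\in\Aut\big(\cD(W,i)\big)$, which is exactly how the proposition is applied in the proof of Theorem \ref{thm.msn}(3). Your second, ``self-contained'' argument is essentially the paper's own proof: with $R=\partial^iW$ and $m=\ell-i$, the paper uses Proposition \ref{prop.q3}(1) and the fact that tensor products of powers of $\s$ commute to get
$\s^{\otimes \ell-i}\big(\partial^i(W^{\s})\big)=(\s^{\ell-i-1}\otimes\cdots\otimes\id)\big(\s^{\otimes \ell-i}(\partial^iW)\big)=(\partial^iW)^{\s}=\partial^i(W^{\s})$,
which is your leg-by-leg computation in only slightly different clothing. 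Your primary route is genuinely different: the generality that $\s\in\Aut(A)$ forces $\s\in\Aut(A^{\s})$ (your verification $\s(a*b)=\s(a)\,\s^{p+1}(b)=\s(a)*\s(b)$ is correct), transported along $\cD(W^{\s},i)\cong\cD(W,i)^{\s}$ from Proposition \ref{prop.q3}(3). This is more conceptual---it explains the proposition as an instance of a Zhang-twist generality rather than verifying a tensor identity---but it carries one obligation the computational proof avoids: you must know that the isomorphism in Proposition \ref{prop.q3}(3) is the identity in degree $1$. Your stated justification (``a Zhang twist is the identity on the underlying graded vector space'') is not quite the point, since the isomorphism $TV/(R^{\s})\to\big(TV/(R)\big)^{\s}$ is emphatically not the identity in degrees $\ge 2$. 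What one actually checks is that the algebra map $\Theta:TV\to (TV)^{\s}$ determined by $\Theta|_V=\id$ sends $v_1\otimes\cdots\otimes v_n$ to $v_1\otimes\s(v_2)\otimes\cdots\otimes\s^{n-1}(v_n)$, and that the hypothesis $\s^{\otimes \ell-i}(R)=R$ gives $\Theta^{-1}(R)=R^{\s}$, so $\Theta$ descends to an isomorphism that is the identity on $V$; since the paper states the lemma underlying Proposition \ref{prop.q3}(3) without proof, that one line is worth writing out. Finally, your remark that the conclusion is in fact equivalent to the hypothesis, because the twisting operator $\s^{\ell-i-1}\otimes\cdots\otimes\id$ is invertible, is correct and slightly sharper than what the paper records.
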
 

\begin{proof}
Since $\cD(W, i):=TV/(\partial^iW)$,  $\s^{\otimes \ell-i}(\partial ^iW)=\partial ^iW$.  
Since 
\begin{align*}
\s^{\otimes \ell-i}(\partial ^i(W^{\s}))
& = \s^{\otimes \ell-i}((\partial ^iW)^{\s}) \\
& =(\s\otimes \cdots \otimes \s)(\s^{\ell-i-1}\otimes \cdots \otimes \id)(\partial ^iW) \\
& =(\s^{\ell-i-1}\otimes \cdots \otimes \id)(\s\otimes \cdots \otimes \s)(\partial ^iW) \\
& =(\s^{\ell-i-1}\otimes \cdots \otimes \id)(\partial ^iW) \\
& =(\partial ^iW)^{\s} \\
& =\partial ^i(W^{\s}), 
\end{align*}
by Proposition \ref{prop.q3} (1), $\s$ extends to a graded algebra automorphism of $\cD(W^{\s}, i):=TV/(\partial ^i(W^{\s}))$.  
\end{proof}  

 The next result is a non-noetherian version of \cite[Thm. 5.4(a),(b)]{RRZ}. 

\begin{theorem} \label{thm.msn} 
Adopt the notation in \S\ref{ssect.notn1}. Let $\s\in \Aut(S)$.  
\begin{enumerate}
\item{} 
$S^{\s}$ is an $m$-Koszul AS-regular algebra of dimension $d$ and Gorenstein parameter $\ell$. 
\item{} $\sfw_{S^{\s}}=(\sfw_S)^{\s}$.  
\item{}  $\s\in \Aut(S^{\s})$ and $\hdet _{S^{\s}}(\s)=\hdet _{S}(\s)$.  
\item{} $\nu \in \Aut(S^{\s})$ and the Nakayama automorphism of $S^{\s}$ is 
$\hdet (\s)^{-1}\s^{\ell}\nu$.
\end{enumerate}
\end{theorem}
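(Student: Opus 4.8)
The plan is to prove the four parts of Theorem \ref{thm.msn} largely in sequence, since each part feeds the next. For part (1), I would begin with the identification $S \cong \cD(\sfw_S, \ell-m)$ from Theorem \ref{thm.DV}, together with Proposition \ref{prop.q3}(3), which gives $\cD(\sfw_S^\s, \ell-m) \cong \cD(\sfw_S, \ell-m)^\s = S^\s$ once I know $\s \in \Aut(S)$ (this is exactly the hypothesis). The key structural input is that Zhang twists preserve the relevant homological invariants: $S^\s$ has the same Hilbert series as $S$, the same global dimension $d$, and the same AS-regularity (this is the standard Zhang-twist invariance, e.g.\ as used in \cite{RRZ}). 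Since the defining relations $R^\s \subseteq V^{\otimes m}$ are still homogeneous of degree $m$ and $\dim_k R^\s = \dim_k R$, the algebra $S^\s = TV/(R^\s)$ is again $m$-homogeneous; that it is $m$-Koszul follows because the $\Ext$-concentration condition is preserved under twisting. The Gorenstein parameter is unchanged because the top of the minimal resolution is controlled by $\dim_k(S^!_\ell) = \dim_k W_\ell$, which is invariant under the twist.

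For part (2), I would use Proposition \ref{prop.sdw}: the element $\sfw_{S^\s}$ is, up to scalar, the unique basis vector of $\bigcap_{s+t=\ell-m} V^{\otimes s} \otimes R^\s \otimes V^{\otimes t} = W_\ell(S^\s)$. By Proposition \ref{prop.wst}, $\sfw_S^\s$ is a $(-1)^{d+1}(\hdet\s)^{-1}\s^\ell\nu$-twisted superpotential, and by part (1) it lies in $V^{\otimes \ell-m} \otimes R^\s$ (since $\partial^{\ell-m}(k\sfw_S^\s) = R^\s$ by Proposition \ref{prop.q3}(1)). Lemma \ref{lem.spwl} then forces $\sfw_S^\s \in W_\ell(S^\s)$, and as that space is one-dimensional, $\sfw_{S^\s} = \sfw_S^\s$ up to scalar, which is all the statement claims.

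For part (3), that $\s \in \Aut(S^\s)$ is precisely Proposition \ref{prop.swi} applied with $W = k\sfw_S$ and $i = \ell-m$. For the equality of homological determinants, I would invoke Theorem \ref{thm.msss} for the algebra $S^\s$: $\hdet_{S^\s}(\s)\,\sfw_{S^\s} = \s^{\otimes \ell}(\sfw_{S^\s})$. Using part (2) to substitute $\sfw_{S^\s} = \sfw_S^\s = (\s^{\ell-1}\otimes\cdots\otimes\s\otimes\id)(\sfw_S)$, and noting that $\s^{\otimes \ell}$ commutes with $(\s^{\ell-1}\otimes\cdots\otimes\id)$, I get $\s^{\otimes\ell}(\sfw_S^\s) = (\s^{\ell-1}\otimes\cdots\otimes\id)(\s^{\otimes\ell}(\sfw_S)) = (\s^{\ell-1}\otimes\cdots\otimes\id)(\hdet_S(\s)\,\sfw_S) = \hdet_S(\s)\,\sfw_S^\s$, whence $\hdet_{S^\s}(\s) = \hdet_S(\s)$.

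Part (4) is the main obstacle and where the real work lies. I would apply Theorem \ref{thm.mnw} to the algebra $S^\s$: its Nakayama automorphism $\nu'$ is the unique element of $\GL(V)$ with $(\nu' \otimes \id^{\otimes\ell-1})\phi(\sfw_{S^\s}) = (-1)^{d+1}\sfw_{S^\s}$. By part (2), $\sfw_{S^\s} = \sfw_S^\s$, and Proposition \ref{prop.wst} already identifies $\sfw_S^\s$ as a $(-1)^{d+1}(\hdet\s)^{-1}\s^\ell\nu$-twisted superpotential, meaning $((-1)^{d+1}(\hdet\s)^{-1}\s^\ell\nu \otimes \id^{\otimes\ell-1})\phi(\sfw_S^\s) = \sfw_S^\s$, i.e.\ $((\hdet\s)^{-1}\s^\ell\nu \otimes \id^{\otimes\ell-1})\phi(\sfw_S^\s) = (-1)^{d+1}\sfw_S^\s$. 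Matching this against the defining equation for $\nu'$ and using the uniqueness clause of Theorem \ref{thm.mnw}, I conclude $\nu' = (\hdet\s)^{-1}\s^\ell\nu$. The delicate point to verify is that $\nu'$ genuinely lies in $\Aut(S^\s)$ and that the twisted-superpotential witness is unique as an element of $\GL(V)$ — this is guaranteed by the uniqueness assertion in Theorem \ref{thm.mnw}, but I would need to confirm that $(\hdet\s)^{-1}\s^\ell\nu$ is indeed a graded automorphism of $S^\s$, which follows from $\s, \nu \in \Aut(S^\s)$ (the latter via Corollary \ref{cor.mnc} or an application of Proposition \ref{prop.swi} to $\nu$) and the fact that $\Aut(S^\s)$ is a group closed under the operations involved.
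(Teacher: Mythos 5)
Your proposal is correct and follows essentially the same route as the paper's proof: twist-invariance of the minimal free resolution of $k$ for (1), Propositions \ref{prop.q3}(3) and \ref{prop.sdw} (with Proposition \ref{prop.wst} supplying the twisted-superpotential hypothesis) for (2), Proposition \ref{prop.swi} plus the Theorem \ref{thm.msss} commutation computation for (3), and Theorem \ref{thm.mnw} combined with Proposition \ref{prop.wst} for (4). One small correction: applying Proposition \ref{prop.swi} to $\nu$ only yields $\nu\in\Aut\big(\cD(W^{\nu},i)\big)$, not $\nu\in\Aut(S^{\s})$, so of your two suggested routes only the first is viable --- as in the paper, one uses $\nu\s=\s\nu$ (Corollary \ref{cor.mnc}) to compute that $\nu^{\otimes\ell}$ preserves $k\sfw^{\s}$ and then invokes Theorem \ref{thm.ws}.
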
 

\begin{proof}  
(1) 
The minimal free resolution of $k$ over $S^{\s}$ is obtained by twisting the minimal free resolution of $k$ over $S$ by $\s$,
so $S^{\s}$ is  also an $m$-Koszul AS-regular algebra of dimension $d$ and Gorenstein parameter $\ell$ (cf. \cite [Prop. 13]{DV}). 

(2) 
Let $\sfw=\sfw_S$.
Since $S=\cD(\sfw,\ell-m)$, $S^{\s}=\cD(\sfw, \ell-m)^{\s}$. But $\cD(\sfw, \ell-m)^{\s} \cong \cD(\sfw^{\s}, \ell-m)$ by Proposition \ref{prop.q3}(3)
so, by Proposition \ref{prop.sdw},  $\sfw_{S^{\s}}=\sfw^{\s}$.

(3)  
By Proposition \ref{prop.swi}, $\s$ is a graded $k$-algebra automorphism of $\cD(\sfw^{\s}, \ell-m)$. Hence $\s \in \Aut(S^\s)$.
  
By Theorem \ref{thm.msss}, $\s^{\otimes \ell}(\sfw)=\hdet_S(\s)\sfw$, so  
\begin{align*}
\s^{\otimes \ell}(\sfw_{S^{\s}}) 
& =\s^{\otimes \ell}(\sfw^{\s}) \\
& =\s^{\otimes \ell}(\s^{\ell-1}\otimes \cdots \otimes \id)(\sfw) \\
& =(\s^{\ell-1}\otimes \cdots \otimes \id)\s^{\otimes \ell}(\sfw) \\
& =(\s^{\ell-1}\otimes \cdots \otimes \id)(\hdet_S(\s) \sfw) \\
& =\hdet_S(\s)\sfw^{\s} \\
& =\hdet_S(\s) \sfw_{S^{\s}} 
\end{align*}
by (2), so $\hdet _{S^{\s}}(\s)=\hdet _{S}(\s)$ by Theorem \ref{thm.msss}. 

(4) Since $\nu \s=\s\nu $ by Corollary \ref{cor.mnc}, and $\hdet (\nu )=1$ by Theorem \ref{thm.hdn},
\begin{align*}
\nu ^{\otimes \ell}(\sfw_{S^{\s}}) 
& = \nu^{\otimes \ell}(\sfw^{\s}) \\
& = \nu^{\otimes \ell}(\s^{\ell-1}\otimes \cdots \otimes \id)(\sfw) \\
& = (\s^{\ell-1}\otimes \cdots \otimes \id)(\nu^{\otimes \ell})(\sfw) \\
& = (\s^{\ell-1}\otimes \cdots \otimes \id)(\hdet (\nu)\sfw) \\
& = (\s^{\ell-1}\otimes \cdots \otimes \id)(\sfw) \\
& = \sfw^{\s}\\
& =\sfw_{S^{\s}}
\end{align*} 
by (2) and Theorem \ref{thm.msss}.
Thus, by Theorem \ref{thm.ws}, $\nu\in \Aut(S^{\s})$. 
By Theorem \ref{thm.mnw} and Proposition \ref{prop.wst}, 
$\nu _{S^{\s}}=(\hdet( \s))^{-1}\s^{\ell}\nu$ is the Nakayama automorphism of $S^{\s}$.
\end{proof}

\begin{corollary} \label{cor.ocy}

Adopt the notation in \S\ref{ssect.notn1} and suppose, further, that $S$ is Calabi-Yau.
 If $\s\in \Aut(S)$, then $S^{\s}$ is Calabi-Yau if and only if $\s^{\ell}=\hdet (\s)\id_S$. 
\end{corollary}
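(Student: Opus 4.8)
The plan is to read this off almost immediately from Theorem~\ref{thm.msn}(4), which already computes the Nakayama automorphism of the Zhang twist $S^{\s}$ in terms of the data of $S$. First I would use the standing hypothesis that $S$ is Calabi-Yau, which by the definition in \S\ref{ssect.CY} means its Nakayama automorphism is $\nu=\id_S$. Substituting $\nu=\id_S$ into the expression $\hdet(\s)^{-1}\s^{\ell}\nu$ supplied by Theorem~\ref{thm.msn}(4), the Nakayama automorphism of $S^{\s}$ becomes exactly $\hdet(\s)^{-1}\s^{\ell}$, viewed as an element of $\GL(V)$.

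Next I would invoke Theorem~\ref{thm.msn}(1), which guarantees that $S^{\s}$ is again an $m$-Koszul AS-regular algebra; by Lemma~\ref{lem.RRZ} it is therefore twisted Calabi-Yau, and by the definition in \S\ref{ssect.CY} such an algebra is Calabi-Yau precisely when its Nakayama automorphism is the identity. Hence $S^{\s}$ is Calabi-Yau if and only if $\hdet(\s)^{-1}\s^{\ell}=\id_S$ in $\GL(V)$, which rearranges to $\s^{\ell}=\hdet(\s)\id_S$. I would also note in passing that $\hdet(\s)\id_S$ genuinely lies in $\Aut(S)$, since any scalar map $\lambda\id_V$ satisfies $(\lambda\id_V)^{\otimes m}(R)=R$, so the asserted equation is an honest identity between elements of $\Aut(S)\subseteq\GL(V)$ and is well posed.

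I expect essentially no obstacle here: the corollary is a direct specialization of Theorem~\ref{thm.msn}(4) to the Calabi-Yau case $\nu=\id_S$, and the rest is arithmetic in $\GL(V)$. The only point warranting a moment's care is the logical equivalence used in the middle step, namely that for the connected graded algebra $S^{\s}$ the condition ``Nakayama automorphism equals the identity'' is genuinely equivalent to being Calabi-Yau rather than merely to being twisted Calabi-Yau; but this is exactly the convention fixed in \S\ref{ssect.CY}, so no further argument is required.
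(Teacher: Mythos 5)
Your proposal is correct and follows exactly the paper's own argument: the paper's proof likewise substitutes $\nu=\id_S$ into the formula $\hdet(\s)^{-1}\s^{\ell}\nu$ from Theorem~\ref{thm.msn}(4) and reads off that $S^{\s}$ is Calabi-Yau if and only if $\s^{\ell}=\hdet(\s)\id_S$. Your extra remarks (invoking Theorem~\ref{thm.msn}(1) and Lemma~\ref{lem.RRZ} to confirm the twisted Calabi-Yau framework applies, and checking that scalar maps lie in $\Aut(S)$) are harmless elaborations of steps the paper leaves implicit.
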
 

\begin{proof} Since $\nu=\id_S$,  $S^{\s}$ is Calabi-Yau if and only if $\hdet( \s)^{-1}\s^{\ell}=\nu _{S^{\s}}=\id_{S^{\s}}$ by Theorem \ref{thm.msn} if and only if $\s^{\ell}=\hdet (\s)\id_S$. 
\end{proof} 

\begin{prop} 
Let $S$ be the polynomial ring on three indeterminates. Then $S^{\s}$ is Calabi-Yau if and only if 
$$
S^\s \; \cong \; \frac{k\<x, y, z\>}{(yz-\xi zy, zx-\xi xz, xy-\xi yx)}
$$ 
for some $\xi \in k$ such that $\xi^3=1$. 
\end{prop}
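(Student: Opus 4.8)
The plan is to reduce the whole statement to Corollary \ref{cor.ocy} and then to a concrete classification of matrices. First I would record the numerical data for $S=k[x,y,z]$: it is a quadratic ($m=2$) AS-regular algebra of dimension $d=3$, hence $\ell=d=3$ by Lemma \ref{lem.mek}, and $\Aut(S)=\GL(V)$ with $V=S_1=kx\oplus ky\oplus kz$. Because $S$ is a polynomial (symmetric) algebra, the homological determinant coincides with the ordinary determinant, as recalled in the subsection on $\hdet$. Since $S$ is Calabi-Yau, Corollary \ref{cor.ocy} then says that $S^{\s}$ is Calabi-Yau if and only if $\s^{\ell}=\hdet(\s)\id_S$, which here is the concrete matrix identity
\[
\s^3=\det(\s)\,\id_V, \qquad \s\in\GL(V)=\GL_3(k).
\]
The proposition thus becomes the assertion that the twists $S^{\s}$ attached to such $\s$ are exactly the algebras $B_\xi:=k\langle x,y,z\rangle/(yz-\xi zy,\,zx-\xi xz,\,xy-\xi yx)$ with $\xi^3=1$.

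For the ``if'' direction I would exhibit each $B_\xi$ ($\xi^3=1$) as a twist of $S$ satisfying the criterion, so that $B_\xi$ is itself Calabi-Yau and therefore any $S^{\s}\cong B_\xi$ is too. Concretely, take $\tau=\diag(1,\xi,\xi^2)\in\GL(V)$ in the basis $x,y,z$; then $\det\tau=\xi^3=1$ and $\tau^3=\diag(1,\xi^3,\xi^6)=\id=\det(\tau)\id$, so $S^{\tau}$ is Calabi-Yau by Corollary \ref{cor.ocy}. Using the description of Zhang twists of $m$-homogeneous algebras, $S^{\tau}\cong TV/(R^{\tau})$ with $R=\ker(V^{\otimes 2}\to\Sym^2V)$ and $R^{\tau}=(\tau\otimes\id)(R)$, I would compute that the commutator $e_i\otimes e_j-e_j\otimes e_i\in R$ maps to the relation $e_ie_j=(a_j/a_i)e_je_i$ in $S^\tau$; for $(a_1,a_2,a_3)=(1,\xi,\xi^2)$ this yields exactly $xy=\xi yx$, $yz=\xi zy$, and $zx=\xi^{-2}xz=\xi xz$ (the last because $\xi^3=1$). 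Hence $S^{\tau}\cong B_\xi$, so $B_\xi$ is Calabi-Yau.

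For the ``only if'' direction I would start from $S^{\s}$ Calabi-Yau, i.e.\ $\s^3=c\,\id$ with $c=\det\s\ne0$. Diagonalizing $\s$, its eigenvalues $a_1,a_2,a_3$ are cube roots of $c$ satisfying $a_1a_2a_3=c$, so $a_i^3=a_1a_2a_3$ for each $i$; this gives $a_1^2=a_2a_3$, $a_2^2=a_1a_3$, $a_3^2=a_1a_2$. From $a_2^2=a_1a_3$ I get $a_3/a_2=a_2/a_1=:\xi$, from $a_i^3$ all equal $c$ I get $a_1^3=a_2^3$, hence $\xi^3=1$ (so also $a_1/a_3=\xi^{-2}=\xi$). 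Reading off $S^{\s}\cong TV/(R^{\s})$ in the eigenbasis, the relations are $e_ie_j=(a_j/a_i)e_je_i$, which after relabelling $x,y,z$ by the eigenbasis are precisely the defining relations of $B_\xi$. This closes the classification.

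The step I expect to be the main obstacle is the diagonalization of $\s$ together with keeping everything defined over $k$: both $\xi$ and the eigenbasis must lie in $k$ for $S^{\s}\cong B_\xi$ to be an isomorphism of $k$-algebras. Separability of $t^3-c$, and hence diagonalizability, requires $\operatorname{char}k\ne 3$; in characteristic $3$ the identity $\s^3=c\,\id$ only forces $(\s-\mu\id)^3=0$ and $\s$ need not be diagonalizable, and moreover there is no primitive cube root of unity. I would therefore carry out the argument under the natural standing assumption that $k$ is algebraically closed (or at least contains the three cube roots of unity and has characteristic $\ne 3$), which is the setting in which the statement as written is the clean classification.
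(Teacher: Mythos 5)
Your proof is correct and takes essentially the same route as the paper's: both reduce, via Corollary \ref{cor.ocy} and the fact that $\hdet=\det$ on a polynomial ring, to the matrix equation $\s^3=\det(\s)\id_V$, classify its solutions as $\diag(\a,\a\xi,\a\xi^2)$ with $\xi^3=1$ after a change of basis, and read off the twisted relations $e_ie_j=(a_j/a_i)e_je_i$ --- the paper merely compresses this by putting $\s$ in Jordan normal form and declaring the classification ``clear.'' Your caveat about the field is a genuine refinement rather than pedantry: in characteristic $3$ a non-identity unipotent $\s$ (a single Jordan block, say) satisfies $\s^3=\id=\det(\s)\id_V$, so $S^{\s}$ is Calabi--Yau by Corollary \ref{cor.ocy} yet is a Jordan-plane-type algebra not of the stated form, and over a field lacking primitive cube roots of unity the companion matrix of $t^3-c$ gives a similar counterexample to the ``only if'' direction, so the hypotheses $\fchar k\neq 3$ and $\mu_3\subseteq k$ that you flag are actually needed and are left implicit in the paper's Jordan-form step.
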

\begin{pf} 
Suppose $\s$ is in Jordan normal form with respect to the basis $\{x,y,z\}$ for $S_1=V$.  
Since $S$ is Calabi-Yau, $S^{\s}$ is Calabi-Yau if and only if $\s^3=\hdet (\s)\id_V=\det (\s)\id_V$. 
Clearly, $\s^3=\det (\s)\id_V$ if and only if  $\s={\rm diag}(\a, \a\xi,\a\xi^2)$ for some $\a,\xi \in k-\{0\}$ such that  
$\xi^3=1$. For such a $\s$, $S^\s$ is as claimed.
\end{pf}

\section{Jacobian algebras}
\label{sect.Jac}

Throughout \S\ref{sect.Jac}, we assume  that the characteristic of $k$ does not divide $\ell$.

Define $c:V^{\otimes \ell}\to V^{\otimes \ell}$ 
by 
$$
c(\sfw) \; :=\; \frac{1}{\ell}\, \sum _{i=0}^{\ell-1}\phi ^i(\sfw).
$$
The {\sf $i$-th order Jacobian algebra} of $\sfw$  is  $J(\sfw, i)  :=\cD(c(\sfw), i)$

\begin{lemma}
 \label{lem.cscs1}
The set of superpotentials in $V^{\otimes \ell}$ is equal to the image of $c$, i.e.,  
$\Im( c)=\{\sfw\in V^{\otimes \ell}\; | \; \phi(\sfw)=\sfw\}$. 
\end{lemma}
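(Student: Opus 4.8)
The plan is to recognize $c$ as the averaging (Reynolds) operator for the cyclic group $\langle \phi \rangle$ acting on $V^{\otimes \ell}$, and then to establish the asserted equality by proving the two inclusions separately. The essential structural fact I would record first is that $\phi^\ell = \id$ on $V^{\otimes \ell}$: the map $\phi$ cyclically permutes the $\ell$ tensor factors, so $\ell$ applications return each factor to its original position. This is precisely what makes $c$ a genuine average over the orbit $\{\sfw, \phi(\sfw), \ldots, \phi^{\ell-1}(\sfw)\}$, and the standing hypothesis that $\operatorname{char} k$ does not divide $\ell$ is exactly what legitimizes the scalar factor $\frac{1}{\ell}$.

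For the inclusion $\Im(c) \subseteq \{\sfw \in V^{\otimes \ell} \mid \phi(\sfw)=\sfw\}$, I would compute $\phi(c(\sfw))$ directly. Applying $\phi$ to $c(\sfw)=\frac{1}{\ell}\sum_{i=0}^{\ell-1}\phi^i(\sfw)$ and reindexing shows $\phi(c(\sfw))=\frac{1}{\ell}\sum_{i=1}^{\ell}\phi^i(\sfw)$; since $\phi^\ell=\id=\phi^0$, the term with $i=\ell$ coincides with the term with $i=0$, so the sum is unchanged and $\phi(c(\sfw))=c(\sfw)$. Hence every element of $\Im(c)$ is a superpotential in the sense of the Definition.

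For the reverse inclusion, I would start from $\phi(\sfw)=\sfw$, which immediately gives $\phi^i(\sfw)=\sfw$ for every $i$ by iteration, whence $c(\sfw)=\frac{1}{\ell}\sum_{i=0}^{\ell-1}\sfw=\sfw$. In particular $\sfw=c(\sfw)\in\Im(c)$, so every superpotential lies in the image of $c$. Combining the two inclusions yields the claimed equality $\Im(c)=\{\sfw\in V^{\otimes\ell}\mid \phi(\sfw)=\sfw\}$.

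I do not anticipate a genuine obstacle here, since this is the standard group-averaging computation for a projection onto the fixed-point subspace. The only points requiring care are the bookkeeping identity $\phi^\ell=\id$ (which I would justify from the definition of $\phi$ as the order-$\ell$ cyclic shift) and the invocation of the characteristic hypothesis that makes the factor $1/\ell$ meaningful; everything else is a short reindexing argument.
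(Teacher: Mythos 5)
Your proof is correct and is essentially the same as the paper's: both directions rest on the identity $\phi^{\ell}=\id$ and the reindexing computation $\phi(c(\sfw))=c(\sfw)$, with superpotentials recovered via $c(\sfw)=\sfw$. Your framing of $c$ as the averaging operator onto the $\langle\phi\rangle$-fixed subspace, and your explicit note that $\operatorname{char} k \nmid \ell$ justifies the factor $\tfrac{1}{\ell}$, are pleasant glosses but do not change the argument.
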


\begin{proof} If $\phi (\sfw)=\sfw$, then 
$$c(\sfw)=\frac{1}{\ell}\sum _{i=0}^{\ell-1}\phi ^i(\sfw)=\frac{1}{\ell}\sum _{i=0}^{\ell-1}\sfw=\sfw,$$
so $\sfw=c(\sfw)\in \Im( c)$.  
For the converse, since $\phi ^{\ell}(\sfw)=\sfw$, if $c(\sfw)\in \Im (c)$, then 
$$\phi (c(\sfw))=\phi \left (\frac{1}{\ell}\sum _{i=0}^{\ell-1}\phi ^i(\sfw)\right)=\frac{1}{\ell}\sum_{i=0}^{\ell-1}\phi ^{i+1}(\sfw)=c(\sfw).$$
\end{proof}

\begin{corollary} \label{cor.cy2} 
Let $0\neq \sfw\in V^{\otimes \ell}$.  If $S=J(\sfw, \ell-m)$ is an $m$-Koszul Calabi-Yau algebra, then  $\sfw_S=c(\sfw)$ 
up to a non-zero scalar multiple.  
\end{corollary}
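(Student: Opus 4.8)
The plan is to apply Proposition \ref{prop.sdw} with the element $c(\sfw)$ in the role of the twisted superpotential appearing there, and then to recognize the resulting intersection as the one-dimensional space $k\sfw_S$ singled out in \S\ref{ssect.notn1}(7). First I would unwind the definitions. By the definition of the Jacobian algebra, $S = J(\sfw, \ell-m) = \cD(c(\sfw), \ell-m) = TV/\big(\pd^{\ell-m}(k\, c(\sfw))\big)$, so the relation space of $S$ is $R = \pd^{\ell-m}(k\, c(\sfw)) \subseteq V^{\otimes m}$; this is exactly the kernel of $V^{\otimes m}\to S_m$ used in \S\ref{ssect.notn1}(3). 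Since $S$ is Calabi-Yau it is in particular twisted Calabi-Yau, hence Artin-Schelter regular by Lemma \ref{lem.RRZ}, so $S$ is an $m$-Koszul AS-regular algebra of the dimension $d$ and Gorenstein parameter $\ell$ recorded in \S\ref{ssect.notn1}.

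Next I would verify that $c(\sfw)$ is a twisted superpotential. By Lemma \ref{lem.cscs1}, every element of $\Im(c)$ is a superpotential, so $\phi(c(\sfw)) = c(\sfw)$; taking $\s = \id$ in Definition \ref{de.tw.spp}, $c(\sfw)$ is therefore a twisted superpotential. With this in hand, all the hypotheses of Proposition \ref{prop.sdw} are met for $c(\sfw)$, and that proposition yields
$$
k\, c(\sfw) \; = \; \bigcap_{s+t+m=\ell} V^{\otimes s}\otimes R\otimes V^{\otimes t} \; = \; W_\ell \; = \; k\sfw_S,
$$
where the middle equality is the definition of $W_\ell$ and the last equality is \S\ref{ssect.notn1}(7) together with Proposition \ref{prop.w}. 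Comparing the two ends gives $k\, c(\sfw) = k\sfw_S$, i.e.\ $\sfw_S = c(\sfw)$ up to a nonzero scalar, which is the assertion. Note that this simultaneously forces $c(\sfw)\neq 0$, since $\sfw_S\neq 0$.

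I do not expect a genuine obstacle here: once Proposition \ref{prop.sdw} and Lemma \ref{lem.cscs1} are available, the argument is essentially a matter of matching notation. The only point needing a little care is the bookkeeping identifying the relation space $R=\pd^{\ell-m}(k\,c(\sfw))$ fed into Proposition \ref{prop.sdw} with $R=\ker(V^{\otimes m}\to S_m)$, and observing that the non-vanishing of $c(\sfw)$ is automatic (were $c(\sfw)=0$, then $S=\cD(0,\ell-m)=TV$, which cannot be AS-regular of the stated parameters). An alternative route would invoke Corollary \ref{cor.sp}: the Calabi-Yau hypothesis gives $\phi(\sfw_S)=(-1)^{d+1}\sfw_S$, to be compared with the superpotential identity $\phi(c(\sfw))=c(\sfw)$; but this comparison still needs Proposition \ref{prop.sdw} to place $c(\sfw)$ inside the line $k\sfw_S$, so the direct approach above is cleaner.
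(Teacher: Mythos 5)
Your proposal is correct and follows essentially the same route as the paper's proof: Calabi-Yau implies AS-regular via Lemma \ref{lem.RRZ}, $c(\sfw)$ is a (twisted) superpotential by Lemma \ref{lem.cscs1}, and Proposition \ref{prop.sdw} then identifies $k\,c(\sfw)$ with the line $k\sfw_S$. Your extra bookkeeping (unwinding $R=\pd^{\ell-m}(k\,c(\sfw))$ and noting $c(\sfw)\neq 0$) just makes explicit what the paper leaves implicit.
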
 

\begin{proof} 
If $S=J(\sfw, \ell-m):=\cD(c(\sfw), \ell-m)$ is Calabi-Yau, then it is AS-regular by Lemma \ref{lem.RRZ}.  Since $c(\sfw)$ is a (twisted) superpotential by Lemma \ref{lem.cscs1},  $c(\sfw)=\sfw_S$ up to a non-zero scalar multiple by Proposition \ref{prop.sdw}.
\end{proof} 
 
\begin{theorem} \label{thm.cycp}
Adopt the notation in \S\ref{ssect.notn1}.  
If $d$ is odd, then the following are equivalent: 
\begin{enumerate}
\item{} $S$ is Calabi-Yau. 
\item{} $\phi(\sfw)=\sfw$. 
\item{} $c(\sfw)=\sfw$. 
\item{} $S\cong J(\sfw, \ell-m)$. 
\end{enumerate}
\end{theorem}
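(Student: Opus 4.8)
The plan is to establish the implications $(1)\Leftrightarrow(2)$, $(2)\Leftrightarrow(3)$, $(3)\Rightarrow(4)$, and $(4)\Rightarrow(1)$, which together yield all four equivalences. The first three are quick consequences of results already in hand, and the real content lies in $(4)\Rightarrow(1)$.

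For $(1)\Leftrightarrow(2)$ I would invoke Corollary~\ref{cor.sp}, which says $S$ is Calabi-Yau if and only if $\phi(\sfw)=(-1)^{d+1}\sfw$; since $d$ is odd, $(-1)^{d+1}=1$, so this is precisely $\phi(\sfw)=\sfw$. For $(2)\Leftrightarrow(3)$ I would use Lemma~\ref{lem.cscs1} and the computation inside its proof: if $\phi(\sfw)=\sfw$ then $c(\sfw)=\frac{1}{\ell}\sum_{i=0}^{\ell-1}\phi^i(\sfw)=\sfw$, while if $c(\sfw)=\sfw$ then $\sfw=c(\sfw)\in\Im(c)$, so $\phi(\sfw)=\sfw$ because $\Im(c)$ is exactly the set of superpotentials. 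For $(3)\Rightarrow(4)$ I would simply note that $c(\sfw)=\sfw$ forces $J(\sfw,\ell-m)=\cD(c(\sfw),\ell-m)=\cD(\sfw,\ell-m)$, which is isomorphic to $S$ by Theorem~\ref{thm.DV}.

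The substantial step is $(4)\Rightarrow(1)$. Assume $S\cong J(\sfw,\ell-m)=\cD(c(\sfw),\ell-m)$. First I would check that $c(\sfw)\neq 0$: otherwise $J(\sfw,\ell-m)=\cD(0,\ell-m)=TV$ is a free algebra of global dimension $1$, forcing $d=\gldim S=1$; but then Lemma~\ref{lem.mek} gives $\ell=1$, contradicting $\ell\ge m\ge 2$. Since $J(\sfw,\ell-m)$ is isomorphic to $S$, it is itself $m$-Koszul AS-regular of Gorenstein parameter $\ell$, and $c(\sfw)$ is a superpotential by Lemma~\ref{lem.cscs1}, hence in particular a twisted superpotential. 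Applying Proposition~\ref{prop.sdw} to $\cD(c(\sfw),\ell-m)$ then identifies $k\,c(\sfw)$ with the top intersection space of $J(\sfw,\ell-m)$, so the canonical element $\sfw_J$ of $J(\sfw,\ell-m)$ equals $c(\sfw)$ up to a nonzero scalar. Because $\phi(c(\sfw))=c(\sfw)$ and $d$ is odd, the criterion of Corollary~\ref{cor.sp}, now applied to $J(\sfw,\ell-m)$ in place of $S$, shows $J(\sfw,\ell-m)$ is Calabi-Yau; since the Calabi-Yau property is an isomorphism invariant, $S$ is Calabi-Yau.

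The main obstacle I anticipate is this last step, and within it two points deserve care: ruling out the degenerate case $c(\sfw)=0$, and the fact that the hypothesis supplies only an abstract isomorphism $S\cong J(\sfw,\ell-m)$ rather than one fixing $V$. The latter is handled by treating $J(\sfw,\ell-m)$ as an $m$-Koszul AS-regular algebra in its own right---so that its canonical element $\sfw_J$ and the criterion of Corollary~\ref{cor.sp} are intrinsically defined---and transferring only the isomorphism-invariant Calabi-Yau conclusion back to $S$ at the very end.
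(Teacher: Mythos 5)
Your proposal is correct and follows essentially the same route as the paper: both use Corollary~\ref{cor.sp} for $(1)\Leftrightarrow(2)$, Lemma~\ref{lem.cscs1} for $(2)\Leftrightarrow(3)$, and Proposition~\ref{prop.sdw} to handle statement $(4)$. The only divergence is that the paper closes the cycle with $(4)\Rightarrow(3)$, tacitly reading the isomorphism in $(4)$ as an identification, whereas you prove $(4)\Rightarrow(1)$ by showing $J(\sfw,\ell-m)$ is Calabi-Yau intrinsically and transferring across the abstract isomorphism --- a slightly more careful packaging (including your harmless check that $c(\sfw)\neq 0$) of the same argument.
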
 

\begin{proof}  
  (1) $\Leftrightarrow $ (2).
  By Corollary \ref{cor.sp},  $S$ is Calabi-Yau if and only if $\phi (\sfw)=(-1)^{d+1}\sfw$.
Since $d$ is odd, $S$ is Calabi-Yau if and only if $\phi (\sfw)=\sfw$

 (2) $\Leftrightarrow $ (3).
If $\phi(\sfw)=\sfw$, then $c(\sfw)=\sfw$.   
Conversely, if $c(\sfw)=\sfw$, then $\phi (\sfw)=\sfw$ by Lemma \ref{lem.cscs1}.  

 (3) $\Leftrightarrow $ (4). 
If $c(\sfw)=\sfw$, then $S=\cD(\sfw, \ell-m)=\cD(c(\sfw), \ell-m)=:J(\sfw, \ell-m)$ by Proposition \ref{prop.sdw}.  Conversely, 
since $S$ is $m$-Koszul AS-regular and $c(\sfw)$ is a (twisted) superpotential by Lemma \ref{lem.cscs1}, if $S=J(\sfw, \ell-m):=\cD(c(\sfw), \ell-m))$, then $\sfw=c(\sfw)$ by Proposition \ref{prop.sdw}
\end{proof} 

\begin{theorem}
Adopt the notation in \S\ref{ssect.notn1}.    If $d$ is even,  then the following are equivalent: 
\begin{enumerate}
\item{} $S$ is Calabi-Yau. 
\item{} $\phi(\sfw)=-\sfw$. 
\item{} $\widetilde c(\sfw)=\sfw$. 
\item{} $S\cong\widetilde J(\sfw, \ell-m)$. 
\end{enumerate} where $\widetilde c(\sfw):=\frac{1}{\ell}\sum _{i=0}^{\ell-1}(-1)^i\phi ^i(\sfw)$ and $\widetilde J(\sfw, i):=\cD(\widetilde c(\sfw), i)$.  
\end{theorem}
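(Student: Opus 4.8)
The plan is to imitate the proof of Theorem~\ref{thm.cycp} essentially line by line, replacing the averaging operator $c$ by its sign-twisted analogue $\widetilde c$ and tracking the single sign that changes because $(-1)^{d+1}=-1$ when $d$ is even. Before anything else I would record the parity input: by Lemma~\ref{lem.mek}, ``$d$ even'' forces $m=2$ and $\ell=d$, so $\ell$ is even. This is the only place the hypothesis enters in an essential way, and it is exactly what makes $\widetilde c$ behave as a projection onto the anti-invariant tensors. With that in hand, $(1)\Leftrightarrow(2)$ is immediate from Corollary~\ref{cor.sp}: $S$ is Calabi--Yau iff $\phi(\sfw)=(-1)^{d+1}\sfw$, which for $d$ even reads $\phi(\sfw)=-\sfw$.

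The genuinely new ingredient is the analogue of Lemma~\ref{lem.cscs1}, which I would establish first: $\Im(\widetilde c)=\{\sfw\in V^{\otimes\ell}\mid \phi(\sfw)=-\sfw\}$. One inclusion is immediate, since $\phi(\sfw)=-\sfw$ gives $\phi^i(\sfw)=(-1)^i\sfw$, whence $\widetilde c(\sfw)=\frac1\ell\sum_{i=0}^{\ell-1}(-1)^i(-1)^i\sfw=\sfw$. For the reverse inclusion I would compute $\phi(\widetilde c(\sfw))$ by reindexing $j=i+1$ and using $\phi^\ell=\id$; the boundary term $(-1)^\ell\phi^\ell(\sfw)$ then coincides with the $j=0$ term \emph{precisely because $\ell$ is even}, so the sum re-closes to $\ell\,\widetilde c(\sfw)$ and one obtains $\phi(\widetilde c(\sfw))=-\widetilde c(\sfw)$. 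Granting this, $(2)\Leftrightarrow(3)$ is formal: $\phi(\sfw)=-\sfw$ yields $\widetilde c(\sfw)=\sfw$ by the first inclusion, and conversely $\widetilde c(\sfw)=\sfw$ puts $\sfw\in\Im(\widetilde c)$, so $\phi(\sfw)=-\sfw$.

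Finally, $(3)\Leftrightarrow(4)$ mirrors the last paragraph of Theorem~\ref{thm.cycp}. If $\widetilde c(\sfw)=\sfw$, then $S=\cD(\sfw,\ell-m)=\cD(\widetilde c(\sfw),\ell-m)=\widetilde J(\sfw,\ell-m)$ by Theorem~\ref{thm.DV}. Conversely, $\widetilde c(\sfw)$ is a twisted superpotential---indeed a $(-\id_V)$-twisted one, since $\phi(\widetilde c(\sfw))=-\widetilde c(\sfw)$ and $-\id_V\otimes\id^{\otimes\ell-1}=-\id_{V^{\otimes\ell}}$, so that $(-\id_V\otimes\id^{\otimes\ell-1})\phi(\widetilde c(\sfw))=\widetilde c(\sfw)$---so if $S\cong\widetilde J(\sfw,\ell-m)=\cD(\widetilde c(\sfw),\ell-m)$, then Proposition~\ref{prop.sdw} identifies $k\,\widetilde c(\sfw)$ with the one-dimensional space $W_\ell=k\sfw$; this forces $\widetilde c(\sfw)=\sfw$ once one notes that $\widetilde c$ is idempotent on its image, which rules out any scalar $\neq 1$.

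The step I expect to demand the most care is the identity $\phi\circ\widetilde c=-\widetilde c$: it is the sole point where the parity of $\ell$ is used, and the whole theorem hinges on recognizing, via Lemma~\ref{lem.mek}, that $d$ even forces $\ell$ even. Were one to run the same alternating-sign computation in the $d$-odd regime, the boundary term of the telescoping sum would flip and $\widetilde c$ would fail to land in the anti-invariant tensors; this is the structural reason the two parity cases require the different operators $c$ and $\widetilde c$.
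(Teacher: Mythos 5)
Your proposal is correct and follows essentially the same route as the paper's proof: $(1)\Leftrightarrow(2)$ via Corollary \ref{cor.sp}; the telescoping identity $\phi\circ\widetilde c=-\widetilde c$, valid exactly because Lemma \ref{lem.dl} forces $m=2$ and $\ell=d$ even, for $(2)\Leftrightarrow(3)$; and Proposition \ref{prop.sdw} applied to the $(-\id_V)$-twisted superpotential $\widetilde c(\sfw)$ for $(3)\Leftrightarrow(4)$. Your extra normalization step --- using idempotence of $\widetilde c$ to rule out a scalar $\lambda\neq 1$ in $k\,\widetilde c(\sfw)=k\sfw$ --- is a detail the paper elides but changes nothing of substance.
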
 

\begin{proof} 
 (1) $\Leftrightarrow $ (2).
  By Corollary \ref{cor.sp},  $S$ is Calabi-Yau if and only if $\phi (\sfw)=(-1)^{d+1}\sfw$.
Since $d$ is even, $S$ is Calabi-Yau if and only if $\phi (\sfw)=-\sfw$

 (2) $\Rightarrow $ (3).
If $\phi(\sfw)=-\sfw$, then 
$$\widetilde c(\sfw)=\frac{1}{\ell}\sum _{i=0}^{\ell-1}(-1)^i\phi ^i(\sfw)=\frac{1}{\ell}\sum _{i=0}^{\ell-1}(-1)^i(-1)^i(\sfw)
=\sfw.$$

(3) $\Rightarrow $ (2).
Suppose $\widetilde c(\sfw)=\sfw$.
Since $d$ is even, Lemma \ref{lem.dl} tells us that $S$ is Koszul and $\ell=d$.
 Therefore  $(-1)^{\ell}\phi ^{\ell}(\sfw)=\sfw$. Hence 
\begin{align*}
\phi (\sfw)  =\phi (\widetilde c(\sfw)) & =\phi \left(\frac{1}{\ell}\sum _{i=0}^{\ell-1}(-1)^i\phi ^i(\sfw)\right) \\
& =\frac{1}{\ell}\sum _{i=0}^{\ell-1}(-1)^i\phi ^{i+1}(\sfw)
\\
& =-\frac{1}{\ell}\sum _{i=0}^{\ell-1}(-1)^{i+1}\phi ^{i+1}(\sfw) \\
& =-\frac{1}{\ell}\sum _{i=0}^{\ell-1}(-1)^i\phi ^i(\sfw)
\\
&=-\widetilde c(\sfw)=-\sfw.
\end{align*}  

 (3) $\Leftrightarrow $ (4).
If $\widetilde c(\sfw)=\sfw$, then $S=\cD(\sfw, \ell-m)=\cD(\widetilde c(\sfw), \ell-m)=:\widetilde J(\sfw, \ell-m)$ by Proposition \ref{prop.sdw}.  For the converse, since $\phi (\widetilde c(\sfw))=-\widetilde c(\sfw)$ by the above proof, $\widetilde c(\sfw)$ is a twisted superpotential.  
Since $S$ is $m$-Koszul AS-regular, if $S=\widetilde J(\sfw, \ell-m):=\cD(\widetilde c(\sfw), \ell-m))$, then $\sfw=\widetilde c(\sfw)$ by Proposition \ref{prop.sdw}
\end{proof}

\section{AS-regular Algebras of dimensions 2 and 3}

Let $S=TV/(R)$ be an AS-regular algebra. 
Frequently, $\hdet (\s)=\det (\s)$ for all  $\s\in \Aut(S)$. In this section we examine some (non-)examples of this phenomenon.

\subsection{} 
Let $S$ be a noetherian AS-regular algebra of dimension 2 generated in degree 1.
Then $S$ is Koszul  and isomorphic to $TV/(\sfw)$ for some rank-two tensor $\sfw \in V^{\otimes 2}$. 
By \cite[Lem. 4.2]{MU}, for example, there exists $\s\in \Aut(S)$ such that $\hdet (\s)\neq \det(\s)$ if and only if $S\cong k\<x, y\>/(xy+yx)$ or,
equivalently, if and only if $\sfw \in\Sym^2V$. 

We will now show that something similar happens for the 3-dimensional case.

\subsection{}
Let $S=TV/(R)$ be a noetherian 2-Koszul AS-regular algebra of dimension 3.  Then $\dim V=3$.  

The symmetric group,  $\frak S_{3}$, acts on $V^{\otimes 3}$ by 
$$\theta(v_1\otimes v_2 \otimes v_{3}):=v_{\theta(1)}\otimes v_{\theta(2)} \otimes v_{\theta(3)},
\qquad \theta \in \fS_3.
$$  
Let $s, a:V^{\otimes 3}\to V^{\otimes 3}$ be the linear maps
\begin{align*}
& s(\sfw):=\frac{1}{|\frak S_{3}|}\sum _{\theta \in \frak S_{3}}\theta (\sfw) \\
& a(\sfw):=\frac{1}{|\frak S_{3}|}\sum _{\theta \in \frak S_{3}}(\operatorname{sgn}\theta) \theta(\sfw),
\end{align*}
and define the following subspaces of $V^{\otimes 3}$: 
\begin{align*}
& \operatorname{Sym}^{3}V:=\{\sfw \in V^{\otimes 3}\; | \; \theta(\sfw )=\sfw\;  \textnormal { for all } \theta\in \frak S_{3}\}, \\
& \operatorname{Alt}^{3}V:=\{\sfw \in V^{\otimes 3}\; | \; \theta(\sfw )=(\operatorname {sgn} \theta)\sfw\;  \textnormal { for all } \theta\in \frak S_{3}\}.
\end{align*}   
Fix a basis $\sfw_0$ for $\Alt^3(V)=k\sfw _0$ and let
$$
\mu :V^{\otimes 3}\to k
$$
be the unique map such that $c(\sfw)=s(\sfw)+\mu (\sfw)\sfw_0$. The map $\mu$ plays a key role in \cite {MS}.

\begin{theorem} \label{thm.hdd} 
Let $S=TV/(R)$ be a noetherian 2-Koszul AS-regular algebra of dimension 3.  
If  $\hdet (\s)\neq \det (\s)$ for some $\s\in \Aut(S)$,  then $c(\sfw_S)\in \Sym^3V$.  
\end{theorem}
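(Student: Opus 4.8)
The plan is to reduce the desired conclusion to the vanishing of the totally antisymmetric component $a(\sfw_S)$, and then to read off that vanishing from the clash between two different scalars by which $\s^{\otimes \ell}$ acts: $\hdet(\s)$ on $\sfw_S$ itself (Theorem~\ref{thm.msss}) versus $\det(\s)$ on the top exterior power.

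First I would note that for $d=3$, $m=2$ Lemma~\ref{lem.dl} gives $\ell=d=3$ and $\dim V=3$, so $\sfw:=\sfw_S\in V^{\otimes 3}$ and $\phi$ generates the alternating subgroup $A_3\subseteq\fS_3$. Applying the projectors $s$ and $a$ to the defining relation $c(\sfw)=s(\sfw)+\mu(\sfw)\sfw_0$ and using $s\phi=s$ together with $a\phi=a$ (a $3$-cycle is even), I would obtain $s(c(\sfw))=s(\sfw)\in\Sym^3V$ and $a(c(\sfw))=a(\sfw)$; since $a\circ s=0$ and $a(\sfw_0)=\sfw_0$, the latter equals $\mu(\sfw)\sfw_0$, so $a(\sfw)=\mu(\sfw)\sfw_0$. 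Because $\Sym^3V\cap\Alt^3V=0$, this shows that $c(\sfw)\in\Sym^3V$ if and only if $\mu(\sfw)=0$ if and only if $a(\sfw)=0$.

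The main step is then short. By Theorem~\ref{thm.msss}, $\s^{\otimes 3}(\sfw)=\hdet(\s)\sfw$ for every $\s\in\Aut(S)$. Since the diagonal map $\s^{\otimes 3}$ is $\fS_3$-equivariant it commutes with $a$, so applying $a$ yields $\s^{\otimes 3}(a(\sfw))=\hdet(\s)\,a(\sfw)$. On the other hand $a(\sfw)\in\Alt^3V\cong\wedge^3V$, and as $\dim V=3$ the element $\s$ acts on this one-dimensional space by $\det(\s)$; hence $\s^{\otimes 3}(a(\sfw))=\det(\s)\,a(\sfw)$. Comparing the two expressions gives $\big(\hdet(\s)-\det(\s)\big)\,a(\sfw)=0$.

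To finish, if some $\s\in\Aut(S)$ satisfies $\hdet(\s)\neq\det(\s)$, then $\hdet(\s)-\det(\s)$ is a nonzero scalar, forcing $a(\sfw)=0$ and therefore $c(\sfw_S)\in\Sym^3V$ by the first step. I do not expect a serious obstacle: the only points needing care are the bookkeeping identifying $a(\sfw)$ with $\mu(\sfw)\sfw_0$ and the standard fact that $\s$ acts on $\wedge^3V$ by its determinant. The conceptual content is simply that $\hdet(\s)$ controls the scaling of the full tensor $\sfw_S$ while $\det(\s)$ controls the scaling of its alternating part, so these can differ only when the alternating part vanishes.
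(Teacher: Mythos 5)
Your proof is correct and is essentially the paper's own argument: both compare the scalar $\hdet(\s)$ by which $\s^{\otimes 3}$ scales $\sfw_S$ (Theorem \ref{thm.msss}) with the scalar $\det(\s)$ by which it scales the antisymmetric component, and conclude that this component must vanish, forcing $c(\sfw_S)=s(\sfw_S)\in\Sym^3V$. Your identification $a(\sfw_S)=\mu(\sfw_S)\sfw_0$ together with the $\fS_3$-equivariance of $a$ and the action of $\s$ on $\wedge^3 V$ by $\det(\s)$ is just an explicit write-up of the identity $\mu(\s^{\otimes 3}(\sfv))=\det(\s)\,\mu(\sfv)$ that the paper cites as an easy exercise.
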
 

\begin{proof} 
Let $\sfw=\sfw_S$.
Although $\mu$ depends on the choice of $\sfw_0$, it is an easy exercise to show that 
(or, essentially by the definition of the determinant)
$\mu (\s^{\otimes 3}(\sfv))=(\det \s)\mu (\sfv)$ for all $\s\in \GL(V)$ and all $\sfv \in V^{\otimes 3}$.

By Theorem \ref{thm.msss}, if $\s \in \Aut(S)$, then $\s^{\otimes 3}(\sfw)=\hdet(\s)\sfw$ so 
$$\det (\s)\mu (\sfw)=\mu (\s^{\otimes 3}(\sfw))=\mu (\hdet (\s)\sfw)=\hdet (\s)\mu (\sfw).$$
If $\hdet (\s)\neq \det (\s)$, then $\mu (\sfw)=0$, so $c(\sfw)=\mu(\sfw)\sfw_0+s(\sfw)=s(\sfw)\in \Sym^3V$. 
\end{proof} 

For the rest of the paper, we write $\cD(\sfw):=\cD(\sfw, 1)$ and $J(\sfw):=J(\sfw, 1)$.

\begin{theorem} \label{thm.cwhd}
Let $V$ be a 3-dimensional vector space and $0\neq \sfw\in V^{\otimes 3}$ such that $J(\sfw)$ is Calabi-Yau. Then the following are equivalent: 
\begin{enumerate}
\item{} $\hdet (\s)\neq \det (\s)$ for some $\s\in \Aut(J(\sfw))$; 
\item{} $c(\sfw)\in \Sym^3V$;
\item{} $R\subseteq  \Sym^2V$. 
\end{enumerate}
\end{theorem}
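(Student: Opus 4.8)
The plan is to prove $(1)\Rightarrow(2)$, then $(2)\Leftrightarrow(3)$ by a direct computation, and finally $(2)\Rightarrow(1)$ by producing a suitable automorphism; this last step is the crux. Write $u:=c(\sfw)$. Since $J(\sfw)=\cD(c(\sfw),1)$ is Calabi-Yau it is a noetherian $2$-Koszul AS-regular algebra of dimension $3$ on three generators, and Corollary~\ref{cor.cy2} together with Lemma~\ref{lem.mek} gives $\sfw_S=u$ up to a nonzero scalar, $\ell=3$, and $d=3$. By Theorem~\ref{thm.msss}, $\hdet(\s)$ is the scalar by which $\s^{\otimes3}$ acts on the line $k\sfw_S=ku$, while $\det(\s)$ is the scalar by which $\s^{\otimes3}$ acts on $\Alt^3V$. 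For $(1)\Rightarrow(2)$ I apply Theorem~\ref{thm.hdd} to $S=J(\sfw)$: if $\hdet(\s)\neq\det(\s)$ for some $\s$, then $c(\sfw_S)\in\Sym^3V$. Since $d=3$ is odd, Corollary~\ref{cor.sp} shows $\sfw_S$ is a superpotential, so $c(\sfw_S)=\sfw_S=u=c(\sfw)$, which is exactly (2).

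For $(2)\Leftrightarrow(3)$, recall $R=\pd(ku)=\{(\psi\otimes\id\otimes\id)(u)\mid\psi\in V^*\}$. Writing $u=\sum u_{ijk}\,e_i\otimes e_j\otimes e_k$ in a basis, the inclusion $R\subseteq\Sym^2V$ says that $\sum_i\psi(e_i)u_{ijk}$ is symmetric in $j,k$ for every $\psi$, i.e.\ that $u_{ijk}=u_{ikj}$ for all $i,j,k$; equivalently, $u$ is fixed by the transposition of its second and third tensor slots. But $u=c(\sfw)$ is a superpotential by Lemma~\ref{lem.cscs1}, hence is also fixed by the $3$-cycle $\phi$. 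Since this transposition and $\phi$ generate $\fS_3$, the inclusion $R\subseteq\Sym^2V$ is equivalent to $u$ being $\fS_3$-invariant, that is, to $u\in\Sym^3V$; this is (2). The reverse implication is the trivial remark that a one-slot contraction of a symmetric tensor is a symmetric $2$-tensor.

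The hard direction is $(2)\Rightarrow(1)$. By the first paragraph it suffices to exhibit a linear involution $\s\in\GL(V)$ with $\s^{\otimes3}(u)=u$ and $\det(\s)=-1$: then $\s\in\Aut(S)$ by Theorem~\ref{thm.ws}, and $\hdet(\s)=1\neq-1=\det(\s)$ by Theorem~\ref{thm.msss}. Viewing $u\in\Sym^3V$ as a cubic form $f$, such a $\s$ is a reflection fixing $f$, in direct analogy with the dimension-two case in which $\sfw=xy+yx\in\Sym^2V$ presents $k\<x,y\>/(xy+yx)$ and the swap $x\leftrightarrow y$ has determinant $-1$ and fixes $\sfw$ (cf.\ \cite[Lem.~4.2]{MU}). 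To produce the reflection I would first exploit that (3) forces the Koszul dual to be \emph{commutative}: $R\subseteq\Sym^2V$ means $R^{\perp}\supseteq(\Sym^2V)^{\perp}$, so $S^{!}=TV^*/(R^\perp)$ is a quotient of the polynomial ring, a graded Artinian Gorenstein algebra with Hilbert series $1+3t+3t^2+t^3$; being Koszul it is cut out by three quadrics, which, being Artinian, form a regular sequence, so $S^{!}$ is a complete intersection of three quadrics with Macaulay dual socle generator $f$. The required reflection is then obtained from the symmetry of the associated net of conics, and it transports back to $S$ since both $\det$ and $\hdet$ are invariant under the linear coordinate change that realizes a normal form.

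I expect the main obstacle to be exactly this construction of the involution: proving that \emph{every} symmetric cubic whose derivation-quotient algebra is Calabi-Yau carries an orientation-reversing linear symmetry. In contrast to the clean earlier steps, this appears to need the classification of the relevant nets of conics (equivalently, of these $3$-dimensional Calabi-Yau algebras) together with a case check, rather than a single uniform argument.
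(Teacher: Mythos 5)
Your steps $(1)\Rightarrow(2)$ and $(2)\Leftrightarrow(3)$ are sound, but $(2)\Rightarrow(1)$ — which you correctly identify as the crux — is not proved, and this is a genuine gap. You reduce $(2)\Rightarrow(1)$ to exhibiting $\s\in\GL(V)$ with $\s^{\otimes 3}(u)=u$ and $\det(\s)=-1$ (the right reduction, identical to the paper's), but the existence of such an orientation-reversing linear symmetry for \emph{every} symmetric cubic $u$ with $J(\sfw)$ Calabi-Yau is exactly the content that must be established, and the appeal to ``the symmetry of the associated net of conics'' does not establish it: a priori a symmetric cubic need not admit any determinant $-1$ linear automorphism preserving the line $ku$, and your complete-intersection/apolarity detour, even granted, produces no reflection. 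Your own closing paragraph concedes that a classification plus case check is needed; that is precisely how the paper closes the gap. It invokes the normal forms of \cite[\S1.8.4]{MS}: if $c(\sfw)\in\Sym^3V$ there is a basis $x,y,z$ in which $c(\sfw)=xyz+yzx+zxy+xzy+yxz+zyx+\a x^3+\b y^3+\c z^3$ with $(\a,\b,\c)\in\{(1,0,0),(1,1,0),(\a,\a,\a)\}$, and then an explicit transposition matrix (swapping $y\leftrightarrow z$, resp.\ $x\leftrightarrow y$ in the $(1,1,0)$ case) fixes the superpotential and has determinant $-1$, whence $\hdet(\s)=1\neq-1=\det(\s)$ by Theorems \ref{thm.ws} and \ref{thm.msss}. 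Without that (or an equivalent) classification input, your argument stops at a reduction.

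The parts you did complete deserve comment. Your $(1)\Rightarrow(2)$ coincides with the paper's: apply Theorem \ref{thm.hdd} to $S=J(\sfw)$ and use $c(\sfw_S)=\sfw_S=c(\sfw)$ (the paper gets this from $c(c(\sfw))=c(\sfw)$ via Lemma \ref{lem.cscs1}; your route through Corollary \ref{cor.sp} with $d=3$ odd is equivalent). Your $(2)\Leftrightarrow(3)$ is actually a nice improvement in self-containedness: the paper simply cites \cite[\S3.1]{MS}, whereas your observation — that $R=\pd(ku)\subseteq\Sym^2V$ is equivalent to invariance of $u$ under the transposition of the last two tensor slots, which together with the $\phi$-invariance of $u=c(\sfw)$ (Lemma \ref{lem.cscs1}) generates the full $\fS_3$-invariance — is a correct one-line argument, and the converse is indeed immediate. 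So: two of three implications check out (one more elegantly than in the paper), but the theorem is not proved as it stands because the pivotal implication $(2)\Rightarrow(1)$ is only reduced, not closed.
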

 
\begin{proof} Let $S=J(\sfw)$.  By Corollary \ref{cor.cy2},  $\sfw_S=c(\sfw)$. 

 (1) $\Rightarrow$ (2) 
 By Theorem \ref{thm.hdd}, $c(\sfw)=c(c(\sfw))=c(\sfw_S)\in \Sym^3V$.

 (2) $\Rightarrow$ (1)
Since $c(\sfw)=s(\sfw)+a(\sfw)\in \Sym^3V\oplus \Alt^3V$, $c(\sfw)\in \Sym^3V$ if and only if $c(\sfw)=s(\sfw)$.  So, if $c(\sfw)\in \Sym^3V$ 
 there is a basis $x, y, z$ for $V$ such that 
$$c(\sfw)=xyz+yzx+zxy+xzy+yxz+zyx+\a x^3+\b y^3+\c z^3$$ 
and  $(\a, \b, \c) \in \{(1, 0, 0), (1, 1, 0), (\a, \a, \a) \; | \; \a \in k\}$  
by   \cite[\S1.8.4]{MS}.  
Let
$$
\s \; = \; 
\begin{cases}
\begin{pmatrix} 1 & 0 & 0 \\ 0 & 0 & 1 \\ 0 & 1 & 0 \end{pmatrix} & \text{if $(\a, \b, \c)$ is $(1, 0, 0)$ or $(\a, \a, \a)$}
\\
\begin{pmatrix} 0 & 1 & 0 \\ 1 & 0 & 0 \\ 0 & 0 & 1 \end{pmatrix} & \text{if $(\a, \b, \c)=(1, 1, 0)$.}
\end{cases}
$$
In all cases $\s^{\otimes 3}(\sfw)=\sfw$ so, by Theorem \ref{thm.ws},  $\s$ extends to an automorphism  of $J(\sfw)$ and, by Theorem \ref{thm.msss},
 $\hdet (\s)=1\neq -1=\det (\s)$.

 (2) $\Leftrightarrow $(3) 
This is the content of \cite [\S3.1]{MS}.  
\end{proof} 
 
\begin{remark} 
Let $V$ be a 3-dimensional vector space and $0\neq \sfw\in V^{\otimes 3}$.
By \cite [Thm. 3.2]{MS}, $c(\sfw)\not \in \Sym^3V$ if and only if $J(\sfw)$ is a deformation quantization of 
the polynomial ring $k[x, y, z]$.  
By Theorem \ref{thm.cwhd},  if $J(\sfw)$ is Calabi-Yau and a deformation quantization of $k[x, y, z]$, then 
$\hdet (\s)=\det (\s)$ for every $\s\in \Aut J(\sfw)$.  
\end{remark} 

The point of our final example is to show that the Zhang twists of a single noetherian Koszul Calabi-Yau algebra of dimension 3  
 can behave very differently from one another.  

\begin{example} 
Let $V=kx\oplus ky \oplus kz$ and $\sfw=xyz+yzx+zxy+xzy+yxz+zyx\in \Sym^3V$. Then   
$$
S\;=\;  \cD(\sfw) \; = \; J(\sfw) \; = \; \frac{k\<x, y, z\>}{(yz+zy, zx+xz, xy+yx)}\,.
$$
is a noetherian Koszul Calabi-Yau algebra of dimension 3. 
The linear maps $\s_i:V\to V$ defined by
$$
\s_1 \begin{pmatrix} x\\ y \\ z\end{pmatrix}  =\begin{pmatrix} 1 & 0 & 0 \\ 0 & 0 & 1 \\ 0 & 1 & 0 \end{pmatrix}\begin{pmatrix} x\\ y \\ z\end{pmatrix} 
\qquad \hbox{and} \qquad
\s_2 \begin{pmatrix} x\\ y \\ z\end{pmatrix}  =\begin{pmatrix} 0 & 1 & 0 \\ 0 & 0 & 1 \\ 1 & 0 & 0 \end{pmatrix}\begin{pmatrix} x\\ y \\ z\end{pmatrix} 
$$
satisfy $\s_i^{\otimes 3}(\sfw)=\sfw$ so $\s_1,\s_2  \in \Aut(S)$ and, by Theorem \ref{thm.msss}, 
$\hdet (\s_i)=1$.

Since $\s_2^3=\id$ and $\s_1^3 \ne \id$, it follows from Corollary \ref{cor.ocy} that $S^{\s_2}$ is Calabi-Yau but $S^{\s_1}$ is not.

A calculation shows that
\begin{align*}
\sfw^{\s_1} & \; = \; xz^2+y^2x+zxy+xy^2+yxz+z^2x \quad \hbox{and}
\\
\sfw^{\s_2} & \; = \; z^3+x^3+y^3+zxy+xyz+yzx
\end{align*}
so $c(\sfw^{\s_1})\in \Sym^3V$ but $c(\sfw^{\s_2})\notin \Sym^3V$. 

Since $c(\sfw^{\s_2})\notin \Sym^3V$, Theorem \ref{thm.hdd} says that $\hdet(\t)=\det( \t)$  for 
all $\t\in \Aut(S^{\s_2})$.  On the other hand, by Theorem \ref{thm.msn}, $\s_1 \in \Aut(S^{\s_1})$  and 
$\hdet_{S^{\s_1}}(\s_1)=\hdet_S(\s_1)=1 \neq -1 = \det (\s_1)$.
 
By Theorem \ref{thm.msn}(2),
 $$
S^{\s_1} \; \cong \; \cD(\sfw^{\s_1})\;  \cong \; \frac{k\<x, y, z\>}{(z^2+y^2, yx+xz, xy+zx)}
$$
and 
$$
S^{\s_2} \; \cong \; \cD(\sfw^{\s_2})\;  \cong \; 
\frac{k\<x, y, z\>}{(yz+x^2, zx+y^2, xy+z^2)}.
$$
\end{example}

\end{document}